\newtheorem{counter}{counter}[section]
\newtheorem{theorem}[counter]{Theorem}
\newtheorem{corollary}[counter]{Corollary}
\newtheorem{lemma}[counter]{Lemma}
\newtheorem{proposition}[counter]{Proposition}
\theoremstyle{definition}
\newtheorem{definition}[counter]{Definition}
\newtheorem{phenomenon}[counter]{Phenomenon}
\newtheorem{notation}[counter]{Notation}
\newtheorem{condition}{Condition}[section]
\title{Nonexistence of exceptional bundles on $\mathbb{P}^{3}$ with maximal possible ranks}
\author{Yeqin Liu}
\begin{document}
\maketitle

\begin{abstract}
We prove that on $\mathbb{P}^{3}$ there is no exceptional bundle with rank $r=2d^{2}+1$ and degree $d$ for every $|d|\geq 4$. In particular, we find a new obstruction for the existence of exceptional bundles other than $r|(2d^{2}+1)$. We also show that there is no exceptional bundle with rank $27$ and degree $11$ to exhibit another different obstruction. 
\end{abstract}

\section{Introduction}
\subsection{Main Theorem}
A vector bundle $E$ on a complex projective variety is called \emph{exceptional} if $\mathrm{Ext}^{*}(E,E)=\mathbb{C}$. Exceptional vector bundles are very useful in the study of vector bundles and derived categories. For example, they form imporant components of semi-orthogonal decompositions of derived categories \cite{BO95}. They are also closely related to the sharp Bogomolov inequality, the classification of the Chern characters of all stable sheaves on a variety. On $\mathbb{P}^{2}$, the sharp Bogomolov inequality has a precise relation with the exceptional bundles \cite{DLP85}. Exceptional bundles can also form generalized Beilinson spectral sequences. In \cite{CHW17}, the authors use this to study the geometry of the moduli spaces of stable sheaves. In \cite{Rud90}, various authors studied exceptional vector bundles on surfaces extensively. However, little is known for threefolds. 

Unlike on $\mathbb{P}^{2}$ where every exceptional bundle is \emph{constructive} by mutations \cite{DLP85}, classifying exceptional bundles on $\mathbb{P}^{3}$ is a challenging problem \cite{Nog91, Rud95}. The same expectation on constructivity requires proving non-existence of non-constructive exceptional bundles. A necessary condition for an exceptional bundle on $\mathbb{P}^{3}$ with rank $r$ and degree $d$ is that $r|(2d^{2}+1)$ \cite{Kul90}. When this is satisfied, few results are known on the existence of such exceptional bundles. In this paper, we develop techniques to study this question. We give the first non-existence result: For a fixed degree $d$, exceptional bundles of maximal possible rank $r=2d^{2}+1$ either are constructive or do not exist.

\begin{theorem}[Theorem \ref{main}]\label{maininintro}
 On $\mathbb{P}^{3}$, there is no exceptional bundle with rank $2d^{2}+1$ and degree $d$ for every $|d|\geq 4$. 
\end{theorem}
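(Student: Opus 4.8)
The plan is to assume such an exceptional bundle $E$ exists, pin down all of its numerical invariants, and then restrict it to a general plane $H\cong\mathbb{P}^{2}$, where the Fano (anticanonical) geometry converts the rigidity of $E$ into a numerical impossibility. First I would dualize: $E^{\vee}$ is again exceptional, with rank $2(-d)^{2}+1$ and $c_{1}=-d$, so I may assume $d\geq 4$. Next, computing $\chi(E,E)$ by Riemann--Roch on $\mathbb{P}^{3}$ and using $\chi(E,E)=1$ forces $\mathrm{ch}_{2}(E)=-d^{2}/2$, i.e. $c_{2}(E)=d^{2}$ (the third Chern class remains free, but it will play no role). The decisive numerical observation is that $\mu(E)=d/(2d^{2}+1)$ is already in lowest terms, since $\gcd(d,2d^{2}+1)=\gcd(d,1)=1$; thus the denominator of the slope equals the rank, and the discriminant $\Delta(E)=\tfrac12\mu^{2}-\mathrm{ch}_{2}/r=\frac{d^{2}(d^{2}+1)}{(2d^{2}+1)^{2}}<\tfrac14$ is bounded.

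Second, relying on the fact that exceptional bundles on $\mathbb{P}^{3}$ are $\mu$-stable, I would set $\bar E=E|_{H}$ and argue that $\bar E$ is $\mu$-semistable on $\mathbb{P}^{2}$. Granting this, the coprimality above upgrades semistability to stability for free: a proper destabilizing subsheaf $F\subsetneq\bar E$ would have $\mu(F)=\mu(\bar E)=d/(2d^{2}+1)$, forcing $(2d^{2}+1)\mid\mathrm{rank}(F)$, which is impossible for $0<\mathrm{rank}(F)<2d^{2}+1$. Hence $\bar E$ is stable, so $\mathrm{Hom}(\bar E,\bar E)=\mathbb{C}$, and by Serre duality on $\mathbb{P}^{2}$ (where $K_{\mathbb{P}^{2}}=\mathcal{O}(-3)$) one gets $\mathrm{Ext}^{2}(\bar E,\bar E)=\mathrm{Hom}(\bar E,\bar E(-3))^{\vee}=0$, the vanishing because no nonzero map can run from a stable sheaf of slope $\mu$ to one of slope $\mu-3$.

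Third, I would compute $\chi(\bar E,\bar E)$ by Riemann--Roch on $\mathbb{P}^{2}$ from the restricted invariants $(r,c_{1},\mathrm{ch}_{2})=(2d^{2}+1,\,d,\,-d^{2}/2)$, obtaining $\chi(\bar E,\bar E)=2d^{4}+2d^{2}+1$. Writing $\mathrm{hom},\mathrm{ext}^{i}$ for the dimensions of $\mathrm{Hom},\mathrm{Ext}^{i}$, the two previous computations then give
\[
\mathrm{ext}^{1}(\bar E,\bar E)=\mathrm{hom}(\bar E,\bar E)-\chi(\bar E,\bar E)+\mathrm{ext}^{2}(\bar E,\bar E)=1-(2d^{4}+2d^{2}+1)+0=-(2d^{4}+2d^{2}),
\]
which is negative for every $d\geq 1$ --- an absurdity. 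This contradiction rules out the existence of $E$.

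The main obstacle, and the only place the hypothesis $|d|\geq 4$ should enter, is establishing that $\bar E$ is $\mu$-semistable. Restriction to a degree-one divisor is genuinely delicate: the theorems of Flenner and Mehta--Ramanathan only guarantee semistability after restriction to a hypersurface $S$ of high degree, on which $K_{S}=\mathcal{O}_{S}(\deg S-4)$ becomes effective and destroys the vanishing $\mathrm{Ext}^{2}(\bar E,\bar E)=0$ that powers the contradiction; so I am forced onto a Fano surface, ideally the plane itself, and must prove an effective hyperplane restriction statement tailored to this $E$. Here the bounded discriminant $\Delta(E)<\tfrac14$ is the crucial leverage: I expect to analyze a hypothetical maximal destabilizing subsheaf of $\bar E$ and show that for $d\geq 4$ its existence would contradict the $\mu$-stability of $E$ on $\mathbb{P}^{3}$ (equivalently, would push $\Delta(E)$ outside the permitted range). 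That the threshold sits exactly at $|d|=4$ is consistent with sharpness of the theorem: for $|d|\leq 3$ the maximal-rank exceptional bundles do exist and are constructive --- for instance $T_{\mathbb{P}^{3}}(-1)$ realizes $(r,c_{1},c_{2})=(3,1,1)$ at $d=1$ --- and their plane restrictions are accordingly unstable, so the argument above must, and does, break down precisely at the restriction step.
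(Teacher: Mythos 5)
Your numerical skeleton is correct: Riemann--Roch plus $\chi(E,E)=1$ does force $\mathrm{ch}_2(E)=-d^2/2$, the discriminant is $\Delta(E)=d^2(d^2+1)/(2d^2+1)^2<1/4$, the coprimality and Serre-duality steps are fine, and $\chi(E|_H,E|_H)=2d^4+2d^2+1$ on $\mathbb{P}^2$ is computed correctly, so semistability of $\bar{E}=E|_H$ would indeed give a contradiction. But the step you defer --- $\mu$-semistability of the restriction to a general \emph{plane} --- is not a technical lemma to be filled in; it is the entire content of the theorem, and it cannot be obtained from the leverage you propose. Every input you allow yourself (slope $d/(2d^2+1)$ in lowest terms, $\Delta(E)<1/4$, $\mu$-stability of $E$ on $\mathbb{P}^3$) holds verbatim for $d=1,2,3$, where the maximal-rank exceptional bundles \emph{do} exist: for $d=1$, the bundle $T_{\mathbb{P}^3}(-1)$ has exactly the invariants $(r,c_1,\mathrm{ch}_2)=(3,1,-1/2)$, is $\mu$-stable with $\Delta=2/9<1/4$, and restricts to every plane as $T_{\mathbb{P}^2}(-1)\oplus\mathcal{O}_H$, which is unstable. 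Nor does $\Delta$ separate the cases: it increases monotonically in $d$ toward $1/4$, so $d\geq 4$ is numerically \emph{worse}, not better. Hence any argument of the shape you describe (destabilizing subsheaf of $\bar{E}$ contradicts stability of $E$ or the discriminant bound) would prove nonexistence for all $d\geq 1$, which is false; no such argument can exist. The concrete failure mode is the classical one: the maximal destabilizing subsheaf of $E|_H$ need not come from a subsheaf of $E$ on $\mathbb{P}^3$ --- it varies with $H$, exactly as $T_{\mathbb{P}^2}\subset T_{\mathbb{P}^3}|_H$ does --- which is precisely why the Mehta--Ramanathan, Flenner and Langer restriction theorems require hypersurfaces of degree growing with the rank and discriminant, and are structurally unavailable in degree one. (The statement you need is vacuously true if the theorem holds, but any proof of it must use some feature distinguishing $d\geq 4$ from $d\leq 3$, and your proposal contains none.)

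For contrast, the paper sidesteps this by restricting not to planes but to \emph{quartic} ($K3$) surfaces, where stability of the restriction is genuinely available: by Zube's theorem (Lemma~\ref{stablespherical}), $E|_X$ is stable and spherical on a smooth Picard-rank-one quartic, hence uniquely determined by its Mukai vector (Lemma~\ref{sphericalunique}). On a $K3$ there is no Ext-vanishing contradiction to exploit, so the difficulty is relocated: one must pin down $E|_X$ for \emph{every} member of a pencil, including singular and higher-Picard-rank members (Proposition~\ref{EXdunique}), and then show that the resulting bundle $\mathcal{E}_d$ on the blown-up total space cannot be a pullback from $\mathbb{P}^3$. The hypothesis $d\geq 4$ enters there, and only there: by Lemma~\ref{restriction}, $\mathcal{E}_d$ restricted to the exceptional divisor $Y\cong C\times L$ contains summands $\mathcal{O}_Y(jF)\otimes W_{d-4j}$ with $j\geq 1$, which are nontrivial along the pencil direction exactly when $W_{d-4}=\mathrm{H}^0(\mathcal{O}_C((d-4)H))\neq 0$, i.e.\ when $d\geq 4$; this contradicts the triviality of $\nu^*(E)|_Y$ in the fiber direction (Theorem~\ref{main}). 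Your proposal has no analogue of this mechanism, and without one the threshold $|d|\geq 4$ cannot appear.
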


In Section \ref{section6}, we exhibit another obstruction (Phenomenon \ref{deform}) for the existence of exceptional bundles which is different from that of Theorem \ref{main}. Using it, we show the following non-existence result that is not covered by Theorem \ref{maininintro}. This also suggests the feasibility to study the constructivity question further, using the techniques developed in this paper.

\begin{theorem}[Theorem \ref{rank27}]\label{rank27inintro}
 On $\mathbb{P}^{3}$, there is no exceptional bundle with rank $27$ and degree $11$. 
\end{theorem}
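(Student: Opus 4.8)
The plan is to derive a contradiction from the restriction of the hypothetical bundle to a plane, where the Drézet--Le Potier classification of semistable sheaves is available. First I would record the numerical constraints. An exceptional bundle $E$ is stable and satisfies $\chi(E,E)=1$, so Riemann--Roch on $\mathbb{P}^{3}$ determines $\mathrm{ch}_2(E)$: writing $\mathrm{ch}(E)=(27,11,\mathrm{ch}_2,\mathrm{ch}_3)$ one finds $27^{2}+4\cdot 27\,\mathrm{ch}_2-2\cdot 11^{2}=1$, hence $\mathrm{ch}_2=-\tfrac92$ and $c_2=65$. (Note $2\cdot 11^{2}+1=243=9\cdot 27$, so the divisibility condition $r\mid 2d^{2}+1$ holds, while $2d^{2}+1=243\neq 27$, placing this case outside Theorem \ref{main} where $r=2d^{2}+1$.) Crucially, only the triple $(r,c_1,\mathrm{ch}_2)=(27,11,-\tfrac92)$ will enter the argument, so the undetermined invariant $\mathrm{ch}_3$ never needs to be pinned down.

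Next I would analyse the planar invariants. A direct computation gives the $\mathbb{P}^{3}$-discriminant $\Delta(E)=\tfrac14\bigl(1-\tfrac{1}{27^{2}}\bigr)=\tfrac{182}{729}$, and this is also the discriminant of the restriction $E_H:=E|_H$ to any plane $H\cong\mathbb{P}^{2}$, since $\mathrm{ch}_0,\mathrm{ch}_1,\mathrm{ch}_2$ are preserved. I claim no semistable sheaf on $\mathbb{P}^{2}$ with invariants $(27,11,-\tfrac92)$ exists. Indeed any such sheaf $F$ has $\chi(F,F)=27^{2}(1-2\Delta)=729-364=365$; if $F$ were stable it would be simple with $\mathrm{Ext}^{2}(F,F)=\mathrm{Hom}(F,F(-3))^{\vee}=0$ by slope reasons, forcing $\mathrm{ext}^{1}(F,F)=1-365<0$, which is absurd. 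Moreover any semistable such $F$ is in fact stable: its Jordan--Hölder factors all have slope $\tfrac{11}{27}$, so each has rank divisible by $27$, and since $\operatorname{rk}F=27$ there is a single factor. Hence $\delta_{\mathrm{DLP}}(\tfrac{11}{27})>\tfrac{182}{729}$ and no semistable sheaf with these invariants exists on $\mathbb{P}^{2}$.

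Consequently $E_H$ is \emph{unstable} for every plane $H$. This is where the deformation obstruction of Section \ref{section6} (Phenomenon \ref{deform}) must do the work: a $\mu$-stable bundle on $\mathbb{P}^{3}$ need not restrict semistably to a hyperplane, since Mehta--Ramanathan only controls general hypersurfaces of high degree, so the planar nonexistence does not yet contradict stability of $E$ on $\mathbb{P}^{3}$. I would study the relative Harder--Narasimhan filtration of $E_H$ as $H$ ranges over $(\mathbb{P}^{3})^{\vee}$: for general $H$ the maximal destabilizing subsheaf $S_H\subset E_H$ has $\mu(S_H)>\tfrac{11}{27}$ and varies in a flat family over an open stratum, and I would track how $S_H$ deforms and degenerates, either by spreading $S_H$ out over an open subset of $(\mathbb{P}^{3})^{\vee}$ and specializing, or by degenerating a general higher-degree surface toward a plane, so as to produce either a saturated subsheaf of $E$ violating its $\mu$-stability or a nonzero class in $\mathrm{Ext}^{1}(E,E)$ contradicting rigidity.

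The main obstacle is precisely this last step: converting the forced instability of \emph{all} hyperplane restrictions into a genuine contradiction on $\mathbb{P}^{3}$. The numerical and planar part is robust and self-contained, but bridging the gap between a plane and $\mathbb{P}^{3}$ requires the deformation-theoretic input, and controlling the destabilizing subsheaf uniformly across the family---its rank, its slope, and its limiting behaviour as $H$ degenerates---is the delicate point that Phenomenon \ref{deform} is designed to handle.
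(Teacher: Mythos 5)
Your numerical work is correct and the planar observation is genuine: $\chi(E,E)=1$ forces $\mathrm{ch}_2(E)=-\tfrac92$, and your $\chi$-argument (stable would give $\mathrm{ext}^1<0$; semistable implies stable since $\gcd(11,27)=1$) does show that no semistable sheaf on $\mathbb{P}^2$ has invariants $(27,11,-\tfrac92)$, so $E|_H$ would be unstable for every plane $H$. But the proof stops exactly there, and the missing bridge is not a technicality --- it cannot be built in the form you propose. Being exceptional on $\mathbb{P}^3$ is perfectly compatible with having unstable restriction to \emph{every} plane: the tangent bundle $T_{\mathbb{P}^3}$ is exceptional, yet $T_{\mathbb{P}^3}|_H\cong T_H\oplus\mathcal{O}_H(1)$ is unstable for every $H$ (slopes $\tfrac32$ and $1$ against $\mu=\tfrac43$). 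So no argument can reach a contradiction from ``exceptional'' plus ``all plane restrictions unstable'' alone. Moreover, in this example the destabilizing subsheaf $T_H\subset T_{\mathbb{P}^3}|_H$ varies with $H$ and comes from no subsheaf of $T_{\mathbb{P}^3}$, so your proposed mechanism --- spreading out the relative Harder--Narasimhan filtration over $(\mathbb{P}^3)^{\vee}$ and specializing to produce a destabilizing subsheaf of $E$ or a class in $\mathrm{Ext}^1(E,E)$ --- fails already here: relative HN filtrations of plane restrictions simply do not descend, and your sketch supplies nothing that distinguishes your situation from the tangent-bundle one.

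You have also misread Phenomenon \ref{deform}: it concerns neither planes nor destabilizing subsheaves. The paper's proof runs entirely through quartic $K3$ surfaces. For a general pencil $\{X_t\}_{t\in L}$ with base curve $C$ and for $X_t$ smooth of Picard rank one, the restriction $E|_{X_t}$ is forced to be \emph{the} spherical bundle $E_{X_t}$ with Mukai vector $(27,11H,9)$ (Lemma \ref{stablespherical} plus Lemma \ref{sphericalunique}); each $E_{X_t}$ is realized as the unique non-split extension of $\mathcal{O}_{X_t}(H)$ by $F|_{X_t}$ for one fixed bundle $F$ on $\mathbb{P}^3$ (Lemma \ref{uniqueextension}); Proposition \ref{uniqueinclusion} and Lemma \ref{scalar} show that an isomorphism $E_{X_t}|_C\cong E_{X_{t'}}|_C$ would force the two extension classes to restrict to proportional classes in $\mathrm{Ext}^1_C(\mathcal{O}_C(H),F|_C)$; and Proposition \ref{nonconstant} shows the images of these restriction maps move non-constantly in $t$, giving $E_{X_t}|_C\not\cong E_{X_{t'}}|_C$ (Corollary \ref{nonisomorphic}). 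Since an exceptional $E$ would force $E_{X_t}|_C\cong E|_C\cong E_{X_{t'}}|_C$, this is the contradiction. The conceptual lesson, if you want to salvage your approach, is that a plane sees only the Chern character of $E|_H$, which is too little; the obstruction must live on a family of surfaces where the restriction of $E$ is \emph{uniquely determined as a sheaf}, and that is precisely what Picard-rank-one quartics provide.
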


\subsection{Outline of the paper}

The strategy to prove Theorem \ref{maininintro} is as follows. Suppose $E$ is an exceptional bundle as in Theorem \ref{maininintro}. We cover $\mathbb{P}^{3}$ by the total family of an appropriate pencil of quartic surfaces (Section \ref{section3}). On each quartic surface $X$ in this pencil, we construct a bundle $E_{X}$ that must agree with $E|_{X}$ (Section \ref{section4}). Then we show that all possible bundles on the total family that are swept out by these $E_{X}$ cannot descend to $\mathbb{P}^{3}$ (Section \ref{section5}).

In Section \ref{section6}, we prove Theorem \ref{rank27inintro} by showing that the restrictions of $E_{X}$ and $E_{X'}$ to the base curve of the pencil are not isomorphic (Phenomenon \ref{deform}).

\subsection{Acknowledgements}
I want to thank Izzet Coskun, Benjamin Gould and Sixuan Lou for many valuable discussions and conversations. I want to thank Jack Huizenga and Howard Nuer for many useful comments on an early draft of this paper.

\section{Preliminaries}

In this section we collect facts that relate vector bundles on $\mathbb{P}^{3}$ and quartic surfaces. For more details about exceptional vector bundles, we refer the readers to \cite{Rud90}. For background about $K3$ surfaces and sheaves on them, some good references are \cite{HL97, Huy16}.

\begin{definition}
  A $K3$ surface is a smooth complex projective surface with $ \mathrm{H}^{1}(X, \mathcal{O}_{X})=0$ and $\omega_{X}\cong \mathcal{O}_{X}$.
\end{definition}

A smooth quartic surface in $\mathbb{P}^{3}$ is a $K3$ surface \cite{Huy16}. 

\begin{definition}
Let $X$ be a surface. A vector bundle $E$ on $X$ is \emph{rigid} if $\mathrm{Ext}^{1}_{X}(E, E)=0$. A vector bundle $F$ is \emph{simple} if $\mathrm{Hom}_{X}(F ,F)=\mathbb{C}$. If $X$ is a $K3$ surface, a vector bundle $G$ is \emph{spherical} if it is slope stable (see \cite{HL97}) and rigid.
\end{definition}

In particular, spherical vector bundles on $K3$ surfaces are simple and rigid. The following lemmas relate exceptional vector bundles on $\mathbb{P}^{3}$ and simple rigid vector bundles on quartic surfaces. 

\begin{lemma}[\cite{Kul90}]\label{simplerigid}
Let $E$ be an exceptional vector bundle on $\mathbb{P}^{3}$ and $X\subset \mathbb{P}^{3}$ any quartic surface. Then $E|_{X}$ is a simple rigid vector bundle.
\end{lemma}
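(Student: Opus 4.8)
The plan is to compare the cohomology of $\mathcal{E}nd(E)$ on $\mathbb{P}^{3}$ with that of $\mathcal{E}nd(E|_{X})$ on $X$ through the defining equation of the quartic. Since $X$ is the zero locus of a section of $\mathcal{O}_{\mathbb{P}^{3}}(4)$, I would start from the twisted ideal sheaf sequence and tensor it by the locally free sheaf $\mathcal{E}nd(E)=E^{\vee}\otimes E$ to get
\[
0\to \mathcal{E}nd(E)(-4)\to \mathcal{E}nd(E)\to \mathcal{E}nd(E|_{X})\to 0,
\]
where I use $\mathcal{E}nd(E)|_{X}=\mathcal{E}nd(E|_{X})$. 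Passing to cohomology converts the two desired statements, namely $\mathrm{Hom}_{X}(E|_{X},E|_{X})=\mathbb{C}$ and $\mathrm{Ext}^{1}_{X}(E|_{X},E|_{X})=0$, into control of $H^{0}$ and $H^{1}$ of the rightmost term. Note that this entire computation takes place on $\mathbb{P}^{3}$, so $X$ need not be assumed smooth.

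The exceptional hypothesis gives $H^{i}(\mathbb{P}^{3},\mathcal{E}nd(E))=\mathrm{Ext}^{i}(E,E)$, equal to $\mathbb{C}$ for $i=0$ and zero otherwise. The crucial input is the cohomology of the twisted term $\mathcal{E}nd(E)(-4)$. Here I would invoke Serre duality on $\mathbb{P}^{3}$ together with two facts: $\mathcal{E}nd(E)$ is self-dual, and $\omega_{\mathbb{P}^{3}}=\mathcal{O}_{\mathbb{P}^{3}}(-4)$. These combine to give
\[
H^{i}(\mathbb{P}^{3},\mathcal{E}nd(E)(-4))\cong H^{3-i}(\mathbb{P}^{3},\mathcal{E}nd(E))^{\vee},
\]
so this cohomology is $\mathbb{C}$ concentrated in degree $3$ and vanishes otherwise. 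The matching of the quartic degree with the anticanonical degree of $\mathbb{P}^{3}$ is exactly what makes the twist cancel cleanly, and is the reason the statement is special to quartics.

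With both outer terms understood, the conclusion follows by chasing the long exact sequence. The vanishing of $H^{0}$ and $H^{1}$ of $\mathcal{E}nd(E)(-4)$ forces the isomorphism $H^{0}(X,\mathcal{E}nd(E|_{X}))\cong H^{0}(\mathbb{P}^{3},\mathcal{E}nd(E))=\mathbb{C}$, giving simplicity; and since both $H^{1}(\mathbb{P}^{3},\mathcal{E}nd(E))$ and $H^{2}(\mathbb{P}^{3},\mathcal{E}nd(E)(-4))$ vanish, the segment of the sequence around $H^{1}(X,\mathcal{E}nd(E|_{X}))$ squeezes it to zero, giving rigidity. As a consistency check, the surviving class in $H^{3}(\mathbb{P}^{3},\mathcal{E}nd(E)(-4))=\mathbb{C}$ maps onto $H^{2}(X,\mathcal{E}nd(E|_{X}))=\mathbb{C}$, which is exactly what Serre duality on the $K3$ surface $X$ predicts since $\omega_{X}=\mathcal{O}_{X}$.

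The step I expect to require the most care is the Serre duality computation of the cohomology of $\mathcal{E}nd(E)(-4)$, since it is where the self-duality of $\mathcal{E}nd(E)$ and the precise twist by the canonical bundle must be combined correctly; once that is in hand, the remainder is a formal diagram chase.
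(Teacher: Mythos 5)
Your proof is correct. Note that the paper does not actually prove this lemma --- it is quoted from \cite{Kul90} without proof --- so there is no internal argument to compare against; your argument (restriction sequence for the quartic, self-duality of $\mathcal{E}nd(E)$, and Serre duality with $\omega_{\mathbb{P}^{3}}=\mathcal{O}_{\mathbb{P}^{3}}(-4)$ cancelling the twist) is the standard one, and it correctly handles arbitrary, possibly singular, quartics since every cohomology computation takes place on $\mathbb{P}^{3}$. The only slip is cosmetic: in your consistency check the connecting map runs $H^{2}(X,\mathcal{E}nd(E|_{X}))\to H^{3}(\mathbb{P}^{3},\mathcal{E}nd(E)(-4))$ (an isomorphism), not the other way around.
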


\begin{lemma}[\cite{Zub90}]\label{stablespherical}
Let $E$ be an exceptional vector bundle on $\mathbb{P}^{3}$ and $X\subset \mathbb{P}^{3}$ a smooth quartic surface with Picard rank 1. Then $E|_{X}$ is a spherical vector bundle.
\end{lemma}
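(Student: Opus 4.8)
Since Lemma \ref{simplerigid} already provides that $E|_X$ is simple and rigid, the plan is to reduce the statement to slope stability and then to establish it; the point of the exercise is that simplicity and rigidity alone do \emph{not} force slope stability, so the global hypothesis that $E$ is exceptional on $\mathbb{P}^3$ must be used. First I would extract the numerical consequences. As $E|_X$ is simple and rigid and $X$ is a $K3$ surface, Serre duality gives $\mathrm{Ext}^2_X(E|_X,E|_X)\cong\mathrm{Hom}_X(E|_X,E|_X)^{\vee}=\mathbb{C}$, so $\chi(E|_X,E|_X)=2$ and the Mukai vector $v=v(E|_X)$ has $\langle v,v\rangle=-2$. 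Writing $v=(r,aH,s)$ with $H$ the restricted hyperplane class (so $H^2=4$), the identity $\langle v,v\rangle=4a^2-2rs=-2$ becomes $rs=2a^2+1$; in particular any common prime factor of $r$ and $a$ would divide $rs-2a^2=1$, so $\gcd(r,a)=1$. This coprimality is the key arithmetic input for the final step, and it also shows $v$ is primitive.

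The essential geometric step is to show that $E|_X$ is $\mu$-semistable. Here I would use that $E$ is exceptional, hence $\mu$-stable on $\mathbb{P}^3$ (this is the stability result of \cite{Zub90}), and transport semistability to the surface. Concretely, if $E|_X$ were not $\mu$-semistable, its maximal destabilizing subsheaf would be canonical, and as $X$ varies in the covering pencil of quartics used later in the paper these Harder--Narasimhan subsheaves would spread out and glue, by uniqueness, to a saturated subsheaf of $E$ on $\mathbb{P}^3$ of slope at least $\mu(E)$ --- contradicting the $\mu$-stability of $E$. This is the Mehta--Ramanathan mechanism adapted to the family of quartics.

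Granting $\mu$-semistability, the upgrade to $\mu$-stability is now purely numerical, using that $X$ has Picard rank $1$. A proper saturated subsheaf $S\subset E|_X$ with $\mu(S)=\mu(E|_X)$ would have rank and first Chern class proportional to those of $E|_X$, so $c_1(S)=\tfrac{\mathrm{rk}(S)}{r}\,aH$; integrality of $c_1(S)$ forces $r\mid \mathrm{rk}(S)\cdot a$, and since $\gcd(r,a)=1$ this gives $r\mid \mathrm{rk}(S)$, impossible for $0<\mathrm{rk}(S)<r$. Hence no subsheaf of equal slope exists, so $\mu$-semistability improves to $\mu$-stability. Combined with the rigidity from Lemma \ref{simplerigid}, this shows that $E|_X$ is spherical.

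The main obstacle is the semistability step. The arithmetic reduction and the final stability upgrade are clean and elementary, but proving that the restriction of $E$ to the specific quartic $X$ is $\mu$-semistable cannot be quoted directly from effective restriction theorems (Flenner, Bogomolov), since those require either high degree or bounded rank, whereas exceptional bundles on $\mathbb{P}^3$ may have very large rank (for instance $2d^2+1$ in Theorem \ref{maininintro}). The work therefore goes into making the Harder--Narasimhan spreading-out argument valid for degree-four surfaces, which is the technical content secured by \cite{Zub90}.
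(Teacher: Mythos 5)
Your first and third steps are sound: the computation $\langle v,v\rangle=-2$, hence $rs=2a^{2}+1$ and $\gcd(r,a)=1$, and the upgrade from $\mu$-semistability to $\mu$-stability on a Picard-rank-one $K3$ via coprimality are both correct. (Note the paper itself offers no proof of this lemma --- it is quoted from \cite{Zub90} --- so the comparison is against the argument in that reference.) The genuine gap is your middle step, and it cannot be repaired by outsourcing it. First, there is a circularity: the input you take from \cite{Zub90}, that exceptional bundles on $\mathbb{P}^{3}$ are $\mu$-stable, is in that reference a \emph{corollary} of the present lemma (once $E|_{X}$ is stable, stability of $E$ follows at once, since a destabilizing subsheaf of $E$ restricts to a destabilizing subsheaf of $E|_{X}$, slopes scaling by $\deg X=4$); there is no independent proof of it available to feed into your argument. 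Second, even granting stability of $E$ on $\mathbb{P}^{3}$, the Mehta--Ramanathan mechanism genuinely fails at fixed degree $4$: the relative Harder--Narasimhan subsheaf of $\nu^{*}E$ over a pencil of quartics lives on $\mathrm{Bl}_{C}\mathbb{P}^{3}$ and descends to a subsheaf of $E$ only if it is suitably trivial along the exceptional divisor over the base curve, and securing this descent is exactly where Mehta--Ramanathan needs the degree of the divisors to be large relative to the rank --- here the degree is fixed at $4$ while the rank is unbounded (e.g.\ $2d^{2}+1$). Restrictions of stable bundles to fixed low-degree hypersurfaces can genuinely fail to be semistable, so no ``adapted'' restriction theorem can carry this step; attributing that work to \cite{Zub90} is a misreading, since \cite{Zub90} proves no such restriction statement.

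The missing idea is that rigidity --- which your semistability step never uses --- is the decisive hypothesis, and it must be exploited \emph{intrinsically on the $K3$ surface}, in the opposite logical direction from your proposal. The known route is: $E|_{X}$ is simple and rigid by Lemma \ref{simplerigid}; if it were not $\mu$-semistable, then for the first step $0\to A\to E|_{X}\to C\to 0$ of the Harder--Narasimhan filtration one has $\mathrm{Hom}(A,C)=0$ for slope reasons, so Mukai's Lemma (Lemma \ref{Mukai}) forces every Harder--Narasimhan (and then Jordan--H\"older) factor to be rigid; analyzing the Mukai vectors $v_{i}=(r_{i},a_{i}H,s_{i})$ of these factors, each satisfying $v_{i}^{2}\le -2$, inside the lattice of a Picard-rank-one $K3$, together with the strict ordering of slopes, yields a contradiction with $v^{2}=-2$. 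This is Kuleshov's theory of rigid sheaves on $K3$ surfaces, and it is what \cite{Zub90} runs; stability of $E$ on $\mathbb{P}^{3}$ then falls out as a consequence, not an input. Your numerical observations would fit naturally as the last step of that argument, but as written your proof both assumes its own conclusion (through the citation) and rests on a restriction principle that is false in the stated generality.
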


The following lemma is the crucial ingredient for our proof.

\begin{lemma}\label{sphericalunique}
Let $X$ be a $K3$ surface. Then there is at most one spherical vector bundle on $X$ with a given Chern character.
\end{lemma}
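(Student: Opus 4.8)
The plan is to show that a spherical bundle is determined up to isomorphism by its Mukai vector, which on a fixed $K3$ surface is in turn determined by the Chern character. Let $E$ and $E'$ be two spherical bundles with the same Chern character, and hence the same Mukai vector $v$. First I would record the numerics. Since $E$ is slope stable it is simple, so $\mathrm{Hom}_{X}(E,E)=\mathbb{C}$, and Serre duality together with $\omega_{X}\cong \mathcal{O}_{X}$ gives $\mathrm{Ext}^{2}_{X}(E,E)\cong \mathrm{Hom}_{X}(E,E)^{\vee}=\mathbb{C}$; rigidity kills $\mathrm{Ext}^{1}_{X}(E,E)$, so $\chi(E,E)=2$. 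Via the Mukai pairing $\chi(F,G)=-\langle v(F),v(G)\rangle$ this reads $\langle v,v\rangle=-2$, and since $E'$ has the same Mukai vector $v$, we also get $\chi(E,E')=-\langle v,v\rangle=2$.

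Next I would argue that any nonzero homomorphism between $E$ and $E'$ in either direction is an isomorphism. Both bundles are slope stable with the same rank $r$, the same slope $\mu$, and the same Chern character. Given a nonzero $\phi\colon E\to E'$, let $I=\mathrm{im}\,\phi$ have rank $s$. Viewing $I$ as a proper quotient of the stable sheaf $E$ forces $\mu(I)>\mu$ when $s<r$, while viewing $I$ (via its saturation) as a proper subsheaf of the stable sheaf $E'$ forces $\mu(I)<\mu$ when $s<r$; this rules out $0<s<r$, so $\phi$ has full rank and is therefore injective. Then $E'/\phi(E)$ is a torsion sheaf supported in dimension zero whose length equals $\mathrm{ch}_{2}(E')-\mathrm{ch}_{2}(E)=0$, so the quotient vanishes and $\phi$ is an isomorphism.

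Finally I would combine these observations. Serre duality gives $\mathrm{Ext}^{2}_{X}(E,E')\cong \mathrm{Hom}_{X}(E',E)^{\vee}$, so that $\chi(E,E')=\mathrm{hom}(E,E')-\mathrm{ext}^{1}(E,E')+\mathrm{hom}(E',E)$. If both $\mathrm{Hom}_{X}(E,E')$ and $\mathrm{Hom}_{X}(E',E)$ vanished, this would force $\chi(E,E')=-\mathrm{ext}^{1}(E,E')\leq 0$, contradicting $\chi(E,E')=2$. Hence at least one of the two Hom spaces is nonzero, and by the previous paragraph the corresponding nonzero map is an isomorphism $E\cong E'$.

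The step I expect to be the main obstacle is the stability argument of the second paragraph: one must verify carefully that slope stability (rather than merely Gieseker stability) is enough to exclude intermediate-rank images, and that the equality of Chern characters is used twice in an essential way, namely to equate the slopes and ranks at the outset and to kill the length of the codimension-two quotient at the end. The numerical input from the Mukai pairing is routine, but the conclusion hinges entirely on promoting the mere existence of a map into an honest isomorphism, and this is exactly where the hypotheses of stability and of a fixed Chern character must be deployed together.
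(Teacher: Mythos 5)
Your proof is correct and follows essentially the same route as the paper: compute $\chi(E,E)=2$ from sphericality, use Riemann--Roch (equivalently the Mukai pairing) to get $\chi(E,E')=2$, apply Serre duality to conclude that $\mathrm{Hom}(E,E')$ or $\mathrm{Hom}(E',E)$ is nonzero, and then invoke stability to promote a nonzero map to an isomorphism. The only difference is that you spell out the standard stable-sheaves-of-equal-slope-and-Chern-character argument that the paper compresses into the single sentence ``by stability of $E$ and $F$, we have $E\cong F$,'' and your elaboration of that step is accurate.
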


\begin{proof}
  Let $E, F$ be spherical vector bundles on $X$ with the same Chern character. By definition, $\mathrm{ext}^{1}(E,E)=0$ and $\mathrm{ext}^{2}(E, E)=\mathrm{hom}(E, E)=1$, we have $\chi(E,E)=\sum_{i\in \mathbb{Z}}\mathrm{ext}^{i}(E,E)=2$. Since $E, F$ have the same Chern character, by Riemann-Roch, we have $\chi(E, F)=\chi(E, E)=2$. Hence either $\mathrm{hom}(E, F)$ or $\mathrm{ext}^{2}(E, F)=\mathrm{hom}(F, E)$ is nonzero, say $\mathrm{hom}(E, F)\neq 0$. Then there is a nonzero map $E \longrightarrow F$. By stability of $E$ and $F$, we have $E\cong F$. 
\end{proof}

The following fact is known as Mukai's Lemma.

\begin{lemma}[Lemma 5.2, \cite{Bri08}]\label{Mukai}
  Let $\mathcal{A}$ be an abelian category. Suppose
  $$0 \longrightarrow A \longrightarrow B \longrightarrow C \longrightarrow 0 $$
  is a short exact sequence in $\mathcal{A}$ such that $\mathrm{Hom}_{\mathcal{A}}(A, C)=0$. Then we have
  $$\mathrm{ext}^{1}(B,B)\geq \mathrm{ext}^{1}(A, A) + \mathrm{ext}^{1}(C,C). $$
  In particular, if $B$ is rigid, then $A$ and $C$ are rigid.
\end{lemma}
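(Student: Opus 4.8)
The plan is to extract the inequality from the long exact sequences obtained by applying $\mathrm{Hom}_{\mathcal{A}}(-,-)$ to the given short exact sequence in each of its two variables, with the hypothesis $\mathrm{Hom}(A,C)=0$ entering as the crucial vanishing that makes the bookkeeping close. Write $i\colon A\to B$ and $p\colon B\to C$ for the two maps, and $i_*,i^*,p_*,p^*$ for the induced maps on $\mathrm{Ext}$ groups.

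First I would apply $\mathrm{Hom}(A,-)$ to the sequence. Since $\mathrm{Hom}(A,C)=0$, the segment $0=\mathrm{Hom}(A,C)\to\mathrm{Ext}^{1}(A,A)\xrightarrow{i_*}\mathrm{Ext}^{1}(A,B)\xrightarrow{p_*}\mathrm{Ext}^{1}(A,C)$ shows that $i_*$ is injective and identifies $\mathrm{Ext}^{1}(A,A)$ with $\ker\bigl(p_*\colon\mathrm{Ext}^{1}(A,B)\to\mathrm{Ext}^{1}(A,C)\bigr)$. Dually, applying $\mathrm{Hom}(-,C)$ and using the same vanishing, $p^{*}\colon\mathrm{Ext}^{1}(C,C)\to\mathrm{Ext}^{1}(B,C)$ is injective. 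These two facts are the only places the hypothesis is used, and they are what will let me recover the two summands.

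Next I would assemble a comparison map out of $\mathrm{Ext}^{1}(B,B)$. Consider $q\colon\mathrm{Ext}^{1}(B,B)\to\mathrm{Ext}^{1}(A,C)$ sending a self-extension $e$ to $p_*i^{*}(e)$, that is, restrict to $A$ and then project to $C$. On $\ker q$ the restriction $i^{*}(e)$ lands in the image of $i_*$, so by the injectivity above it lifts to a well-defined class $r_1(e)\in\mathrm{Ext}^{1}(A,A)$; symmetrically $p_*(e)$ lies in the image of $p^{*}$ and lifts to a well-defined $r_2(e)\in\mathrm{Ext}^{1}(C,C)$, the uniqueness of both lifts being guaranteed precisely by $\mathrm{Hom}(A,C)=0$. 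This produces $R=(r_1,r_2)\colon\ker q\to\mathrm{Ext}^{1}(A,A)\oplus\mathrm{Ext}^{1}(C,C)$, and since $\ker q\subseteq\mathrm{Ext}^{1}(B,B)$ the desired inequality follows once $R$ is shown to be surjective.

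The surjectivity of $R$ is the hard part. Realizing a prescribed pair $(\eta,\zeta)$ by a genuine self-extension of $B$ is obstructed by the Yoneda products of $\eta$ and $\zeta$ with the extension class $\epsilon\in\mathrm{Ext}^{1}(C,A)$ of the sequence, which live in $\mathrm{Ext}^{2}(C,A)$; the content of the lemma is that these obstructions cancel, so that every pair lifts. I expect this cancellation, tracking the connecting homomorphisms in the relevant long exact sequences and checking that the apparent deficit in the restriction map is exactly compensated by the contribution of $\mathrm{Ext}^{1}(C,A)$, to be the main technical obstacle, and the hypothesis $\mathrm{Hom}(A,C)=0$ to be exactly what removes the extraneous term in the count. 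Finally, the displayed consequence is immediate: if $B$ is rigid then $\mathrm{ext}^{1}(B,B)=0$ forces $\mathrm{ext}^{1}(A,A)=\mathrm{ext}^{1}(C,C)=0$, so $A$ and $C$ are rigid.
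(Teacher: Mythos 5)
The formal part of your argument is fine: $\mathrm{Hom}(A,C)=0$ does make $i_*\colon\mathrm{Ext}^1(A,A)\to\mathrm{Ext}^1(A,B)$ and $p^*\colon\mathrm{Ext}^1(C,C)\to\mathrm{Ext}^1(B,C)$ injective, your classes $r_1(e),r_2(e)$ are well defined on $\ker q$, and the inequality would follow from surjectivity of $R$. But that surjectivity is the entire content of the lemma, and you leave it as an expectation rather than a proof; this is a genuine gap, and it is not one that more diagram chasing can close, because the statement is \emph{false} for a general abelian category (the paper's phrase ``let $\mathcal{A}$ be an abelian category'' overstates the result it cites). Concretely, let $\Lambda$ be the radical-square-zero algebra of the quiver with vertices $1,2$, loops $x$ at $1$ and $y$ at $2$, and one arrow $b\colon 2\to 1$ (relations $x^2=y^2=xb=by=0$); take $A=S_1$, $C=S_2$ the simples and $B$ the nonsplit extension $0\to S_1\to B\to S_2\to 0$ classified by $b$. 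Then $\mathrm{Hom}(A,C)=0$ and $\mathrm{ext}^1(A,A)=\mathrm{ext}^1(C,C)=1$, yet $\mathrm{ext}^1(B,B)=0$ (near $B$ the representation scheme in dimension vector $(1,1)$ is smooth of dimension one and the $\mathrm{GL}_1\times\mathrm{GL}_1$-orbit of $B$ is already one-dimensional). In your language: chasing your sequences one step further shows that surjectivity of $r_1$ alone is \emph{equivalent} to $\mathrm{im}(\delta_A)\subseteq\mathrm{im}(\delta_C)$, where $\delta_A\colon\mathrm{Ext}^1(A,A)\to\mathrm{Ext}^2(C,A)$ and $\delta_C\colon\mathrm{Ext}^1(C,C)\to\mathrm{Ext}^2(C,A)$ are the connecting maps (Yoneda product with $\epsilon$), and in this example $\delta_A(x)=[xb]$ and $\delta_C(y)=[by]$ span distinct lines in $\mathrm{Ext}^2(C,A)\cong\mathbb{C}^2$, so the hoped-for cancellation simply does not occur.

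The missing ingredient is Serre duality. The result the paper actually invokes (\cite{Bri08}, Lemma 5.2, due to Mukai) concerns a short exact sequence in the heart of a t-structure on the derived category of a K3 surface; the paper itself gives no proof, only this citation. In that $2$-Calabi--Yau setting the hypothesis $\mathrm{Hom}(A,C)=0$ does far more than give your two injectivities: it kills the whole obstruction space, since $\mathrm{Ext}^2(C,A)\cong\mathrm{Hom}(A,C)^*=0$. With that vanishing the count closes along lines close to yours: first, $i_*\mathrm{Ext}^1(A,A)$ lies in the image of $i^*\colon\mathrm{Ext}^1(B,B)\to\mathrm{Ext}^1(A,B)$, because the obstruction to extending $i_*\eta$ is $i_*\bigl(\delta_A(\eta)\bigr)\in i_*\mathrm{Ext}^2(C,A)=0$; second, $\ker(i^*)=p^*\mathrm{Ext}^1(C,B)$ has dimension at least $\mathrm{ext}^1(C,C)$, because $p_*\colon\mathrm{Ext}^1(C,B)\to\mathrm{Ext}^1(C,C)$ is surjective (again $\mathrm{Ext}^2(C,A)=0$) while $\ker(p^*)\subseteq\ker(p_*)$ inside $\mathrm{Ext}^1(C,B)$, this last inclusion being where $\mathrm{Hom}(A,C)=0$ enters. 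Adding the two contributions gives $\mathrm{ext}^1(B,B)\geq\mathrm{ext}^1(A,A)+\mathrm{ext}^1(C,C)$. So your plan is salvageable, but only after importing the Calabi--Yau/Serre-duality input; as written, the ``hard part'' you defer is not merely unproven, it fails in the generality in which you posed it.
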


We also need the following lemma, which is a consequence of the cohomology and base change theorems.

\begin{lemma}\label{locally}
  Let $Y$ be a smooth curve and $f: X \longrightarrow Y$ be a flat projective morphism. Let $E, F$ be locally free sheaves on $X$. If
\begin{enumerate}
\item  For all $y\in Y$, we have $E|_{f^{-1}(y)}\cong F|_{f^{-1}(y)}$,
\item  the dimension of $\mathrm{Ext}^{1}_{f^{-1}(y)}(E|_{f^{-1}(y)}, E_{f^{-1}(y)})$ is constant for all $y\in Y$,
\end{enumerate}
then for any $y\in Y$, there exists an open neighborhood $U_{y}\subset Y$ of $y$, such that $E|_{f^{-1}(U)}\cong F|_{f^{-1}(U)}$.
\end{lemma}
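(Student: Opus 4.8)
The plan is to realize the fiberwise isomorphisms as the restriction of a single morphism defined over a neighborhood of $y$, and then to spread out the isomorphism property using properness. Set $\mathcal{G}=\mathcal{H}om(E,F)=E^{\vee}\otimes F$, a locally free sheaf on $X$; since $f$ is flat and $\mathcal{G}$ is locally free, $\mathcal{G}$ is flat over $Y$, and $f$ being projective it is proper, so the $R^{i}f_{*}\mathcal{G}$ are coherent. Because $E$ and $F$ are locally free, for each $y\in Y$ one has $\mathcal{G}|_{f^{-1}(y)}=\mathcal{H}om(E|_{f^{-1}(y)},F|_{f^{-1}(y)})$ and $H^{i}(f^{-1}(y),\mathcal{G}|_{f^{-1}(y)})=\mathrm{Ext}^{i}_{f^{-1}(y)}(E|_{f^{-1}(y)},F|_{f^{-1}(y)})$. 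Using the fiberwise isomorphism (1), this group is isomorphic to $\mathrm{Ext}^{i}_{f^{-1}(y)}(E|_{f^{-1}(y)},E|_{f^{-1}(y)})$; in particular its dimension in degree $i=1$ is constant in $y$ by hypothesis (2).

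The heart of the argument is to lift the isomorphism on the central fiber to a morphism over a neighborhood, and the clean way to do this is to use constancy of $h^{1}$ rather than $h^{0}$. First I would apply Grauert's theorem to $\mathcal{G}$ in degree $1$: since $Y$ is a smooth (hence locally integral) curve and $\dim H^{1}(f^{-1}(y),\mathcal{G}|_{f^{-1}(y)})$ is constant, the sheaf $R^{1}f_{*}\mathcal{G}$ is locally free near $y$ and the base change map $\phi^{1}(y)$ is an isomorphism. The key trick is then to feed this into the cohomology and base change theorem in the form of Hartshorne III.12.11(b) with $i=1$: since $\phi^{1}(y)$ is surjective, the local freeness of $R^{1}f_{*}\mathcal{G}$ near $y$ is equivalent to surjectivity of the degree-zero base change map $\phi^{0}(y)\colon (f_{*}\mathcal{G})\otimes k(y)\to H^{0}(f^{-1}(y),\mathcal{G}|_{f^{-1}(y)})$. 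Hence $\phi^{0}(y)$ is surjective, and by part (a) of the same theorem it is an isomorphism in a neighborhood of $y$. This step is exactly where hypothesis (2) is consumed, and it explains why one never needs to control $h^{0}$ directly.

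With $\phi^{0}(y)$ surjective, the fiber isomorphism $\alpha_{y}\in\mathrm{Hom}_{f^{-1}(y)}(E|_{f^{-1}(y)},F|_{f^{-1}(y)})=H^{0}(f^{-1}(y),\mathcal{G}|_{f^{-1}(y)})$ lies in the image of $(f_{*}\mathcal{G})\otimes k(y)$; lifting through the stalk by Nakayama and spreading out, I obtain, after shrinking to some $U\ni y$, a homomorphism $\psi\in(f_{*}\mathcal{G})(U)=\mathrm{Hom}_{f^{-1}(U)}(E,F)$ whose restriction to $f^{-1}(y)$ equals $\alpha_{y}$. Finally I would argue openness of the isomorphism locus: $E$ and $F$ have the same rank $r$ by (1), so the locus $Z\subset f^{-1}(U)$ where $\psi$ fails to be an isomorphism of fibers is the zero scheme of $\det\psi\in H^{0}(f^{-1}(U),\det F\otimes(\det E)^{-1})$, hence closed; since $\psi|_{f^{-1}(y)}=\alpha_{y}$ is an isomorphism, $Z$ is disjoint from $f^{-1}(y)$. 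As $f$ is projective, $f(Z)$ is closed in $U$ and does not contain $y$, so over $U'=U\setminus f(Z)$ the map $\psi$ is a pointwise isomorphism of locally free sheaves of equal rank, and therefore an isomorphism $E|_{f^{-1}(U')}\cong F|_{f^{-1}(U')}$, which is the desired neighborhood $U_{y}$.

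I expect the degree-zero liftability, namely the surjectivity of $\phi^{0}(y)$, to be the main obstacle: a priori a homomorphism on the central fiber need not extend over a neighborhood, and it is precisely condition (2) that forces it through the III.12.11(b) equivalence. The secondary point requiring care is guaranteeing that the lifted $\psi$ is genuinely an isomorphism rather than merely nonzero on the fiber, which is handled by the determinant-vanishing and properness spreading-out argument in the last paragraph.
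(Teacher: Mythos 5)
Your proof is correct and follows essentially the same strategy as the paper's: use constancy of $\mathrm{ext}^{1}$ along the fibers to force surjectivity of the degree-zero base change map $\phi^{0}(y)$, lift the fiber isomorphism to a homomorphism over a neighborhood, and then remove the degeneracy locus of its determinant. The only differences are in packaging: the paper proves the surjectivity by hand (twisting by $\mathcal{O}(-X_{y})$, Leray over an affine base, and injectivity of multiplication by the equation of $y$ on the locally free sheaf $R^{1}f_{*}\underline{Hom}(E,F)$) where you invoke Grauert together with Hartshorne III.12.11(b), and the paper discards the degeneracy locus by showing $\det \underline{Hom}(E,F)$ is a pullback from $Y$ where you simply push it forward using properness of $f$.
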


\begin{proof}
  For any $y\in Y$ and open subset $U\subset Y$, write $X_{y}:=f^{-1}(y)$ and $X_{U}:=f^{-1}(U)$. For any sheaf $M$ on $X$, write $M_{y}:=M|_{X_{y}}$ and $M|_{U}:=M|_{X_{U}}$.

  Let $y\in Y$ be any point. Choose an affine neighborhood $U$ of $y$. Then $\mathrm{Hom}_{X_{U}}(E_{U},F_{U})= \mathrm{H}^{0}(X_{U}, \underline{Hom}(E_{U}, F_{U}))$. Consider the short exact sequence
  $$0 \longrightarrow \underline{Hom}(E, F(-X_{y})) \longrightarrow \underline{Hom}(E, F) \longrightarrow \underline{Hom}(E, F|_{y})\cong \underline{Hom}_{X_{y}}(E_{y}, F_{y}) \longrightarrow 0. $$
Applying $ \mathrm{H}^{0}(-)$, we get
  $$\mathrm{Hom}(E_{U}, F_{U}) \overset{f}{\longrightarrow} \mathrm{Hom}_{X_{y}}(E_{y}, F_{y}) \longrightarrow  \mathrm{H}^{1}(\underline{Hom}(E_{U}, F_{U}(-X_{y}))) \overset{g}{\longrightarrow}  \mathrm{H}^{1}(\underline{Hom}(E_{U}, F_{U})).$$
  By the Leray spectral sequence, for any vector bundle $M$ on $X_{U}$, $ \mathrm{H}^{1}(X_{U}, M)$ admits a filtration whose graded pieces are $ \mathrm{H}^{0}(U, R^{1}f_{*}M)$ and $ \mathrm{H}^{1}(U, R^{0}f_{*}M)$. Since $U$ is affine, $ \mathrm{H}^{1}(U, R^{0}f_{*}M)=0$ for any coherent sheaf $M$. Hence the map $g$ can be identified with the multiplication of the equation of $y$:
  $$g:  \mathrm{H}^{0}(U, R^{1}f_{*}\underline{Hom}(E_{U}, F_{U})\otimes \mathcal{O}_{Y}(-y)) \overset{y}{\longrightarrow} \mathrm{H}^{0}(U, R^{1}f_{*}\underline{Hom}(E_{U}, F_{U})).$$
  By assumption (2), the dimension of $\mathrm{Ext}^{1}_{X_{y}}(E_{y}, F_{y})\cong \mathrm{Ext}^{1}_{X_{y}}(E_{y}, E_{y})$ is constant for all $y\in Y$. Since $f$ is projective and $E, F$ are flat over $Y$. By the cohomology and base change theorems,
  $R^{1}f_{*}\underline{Hom}(E_{U}, F_{U})$ is locally free.
  Then $g$ is injective, $f$ is surjective. By assumption (1), $E_{y}\cong F_{y}$. Hence we may choose a lift $h\in \mathrm{Hom}_{X_{U}}(E_{U}, F_{U})$ of $\mathrm{id}\in \mathrm{Hom}_{X_{y}}(E_{y},F_{y})$.

  Let $\mathcal{G}'=\mathrm{det}(\underline{\mathrm{Hom}}(E_{U},F_{U}))$. The degeneracy locus of $h$ is the vanishing locus of $\mathrm{det}(h)\in  \mathrm{H}^{0}(X_{U}, \mathcal{G}')$. By assumption (2), $\mathcal{G}|_{y'}\cong \mathcal{O}_{y'}$ for all $y'\in Y$. Hence $\mathcal{G}'=f^{*}\mathcal{L}$ where $\mathcal{L}=f_{*}\mathcal{G}'$ is a line bundle on $Y$. We have the natural identification $ \mathrm{H}^{0}(X_{U}, \mathcal{G}')\cong  \mathrm{H}^{0}(U, \mathcal{L})$. Let $Z\subset U$ be the vanishing locus of $\mathrm{det}(h)\in  \mathrm{H}^{0}(U, \mathcal{L})$, then $y\notin Z$. Over $U_{s}:=U-Z$, $h: E|_{U_{s}} \longrightarrow F|_{U_{s}}$ is an isomorphism.
\end{proof}

\section{Pencils of quartic surfaces}\label{section3}

In this section we will choose an appropriate pencil of quartic surfaces which sweep out $\mathbb{P}^{3}$ (Proposition \ref{pencil}). Let $E$ be an exceptional bundle on $\mathbb{P}^{3}$. On each member $X$ of this pencil, we would like some information about $E|_{X}$. To do this, we cover $X$ by a pencil of curves, such that each curve is the base locus of another pencil of quartic surfaces that contains a smooth member $X'$ with Picard rank 1. Then the restrictions of $E$ to these base curves can be understood, since $E|_{X'}$ is stable spherical (Lemma \ref{stablespherical}). The requirement for this pencil is Condition \ref{condition1}, and we show such pencil exists in Proposition \ref{pencil}. 

Let $\mathbb{P}^{34}=\mathbb{P}^{34}=\mathbb{P} \mathrm{H}^{0}(\mathcal{O}_{\mathbb{P}^{3}}(4H))$ be the space of quartic surfaces in $\mathbb{P}^{3}$. For a very general line $L\subset \mathbb{P}^{34}$, let $\widetilde{\mathbb{P}^{3}_{L}}=\mathrm{BL}_{C}\mathbb{P}^{3}$, where $C$ is the base locus of $L$. Then $\widetilde{\mathbb{P}^{3}_{L}}$ is the total space of the pencil of quartics parameterized by $L$, it admits morphisms $\pi: \widetilde{\mathbb{P}^{3}_{L}} \longrightarrow L$ and $\nu: \widetilde{\mathbb{P}^{3}_{L}} \longrightarrow \mathbb{P}^{3}$. For any $t\in L$, let $X_{t}=\pi^{-1}(t)$. The requirement for $L$ is as follows.

\begin{condition}\label{condition1}
For any $t\in L$, there exists a pencil $L_{t}\subset \mathbb{P}^{34}$ with $t\notin L_{t}$, such that for any $s\in L_{t}$, the pencil $L_{s,t}=\langle X_{s}, X_{t}\rangle$ contains a $K3$ surface with Picard rank 1.
\end{condition}

To study Condition \ref{condition1}, we need to study the Noether-Lefschetz locus for quartic surfaces in $\mathbb{P}^{3}$. In the space of quartic surfaces $\mathbb{P}^{34}$, there is a countable union of hypersurfaces that parametrize smooth quartic surfaces whose Picard rank $>1$ or singular quartic surfaces. We denote these irreducible hypersurfaces by $NL_{i}, i\in \mathbb{Z}_{>0}$, and let $NL=\bigcup_{i=1}^{\infty}NL_{i}$. For any $t\in L$, let $\mathrm{cone}_{i}(t)$ be the union of lines $L'\subset NL_{i}$ that contains $t$.

Condition \ref{condition1} holds for $t$ if and only if $\mathrm{cone}_{i}(t)$ has codimension $\geq 2$ in $\mathbb{P}^{34}$ for all $i$. To see this, first we assume say $\mathrm{cone}_{1}(t)$ has codimension 1. Then for any pencil $L_{t}$, there exists $s\in L_{t}\cap \mathrm{cone}_{1}(t)$. Then the pencil $L_{s,t}\subset \mathrm{cone}_{1}(t)\subset NL_{1}$ has no member whose Picard rank is 1. Conversely, if $\mathrm{cone}_{i}(t)$ has codimension $\geq 2$ for all $i$, then we may choose $L_{t}$ so that $L_{t}\cap \mathrm{cone}_{i}(t)=\emptyset$ for all $i$. For any $s\in L_{t}$, if the pencil $L_{s,t}$ is contained in $NL$, since $L_{s,t}$ is irreducible and $NL$ is a countable union of $NL_{i}$, there exists some $NL_{i}$, say $NL_{1}$, that contains $L_{s,t}$. By definition $L_{s,t}\subset \mathrm{cone}_{1}(t)$, a contradiction. Hence the pencil $L_{s,t}$ is not contained in $NL$, and Condition \ref{condition1} is satisfied.

Note that if $\mathrm{cone}_{i}(t)$ has codimension 1, then it is a hypersurface, hence $\mathrm{cone}_{i}(t)=NL_{i}$ and $[X_{t}]\in NL_{i}$ is a cone point, in the following sense.

\begin{definition}
  For any irreducible subvariety $Y\subset \mathbb{P}^{n}$, a point $p\in Y$ is called a \emph{cone point}, is for any other point $q\in Y$, the line $\overline{pq}\subset Y$. Denote the set of cone points of $Y$ by $CP(Y)$. 
\end{definition}

Then we see that $X_{t}$ satisfies Condition \ref{condition1} if and only if $t\in L\subset \mathbb{P}^{34}$ is not a cone point of $NL_{i}$ for any $i$. Let $CP_{i}=CP(NL_{i})$ and $CP=\bigcup_{i=1}^{\infty}CP_{i}$. We may replace the Condition \ref{condition1} for $L$ with the following equivalent condition.

\begin{condition}\label{condition2}
$$ L\cap CP=\emptyset. $$
\end{condition}

The next lemma shows that cone points are few.

\begin{lemma}\label{few}
Let $Y\subset \mathbb{P}^{n}$ be an irreducible hypersurface whose degree $d\geq 2$. Then $\emph{dim}CP(Y)<\emph{dim}Y$. 
\end{lemma}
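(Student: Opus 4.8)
The plan is to show that every cone point of $Y$ must be a singular point, i.e.\ $CP(Y)\subseteq \mathrm{Sing}(Y)$. Granting this, the lemma follows immediately: since $Y$ is an irreducible (hence reduced) hypersurface, its smooth locus is a nonempty dense open subset, so $\mathrm{Sing}(Y)$ is a proper closed subset and $\dim \mathrm{Sing}(Y)\leq \dim Y-1$. As $CP(Y)\subseteq \mathrm{Sing}(Y)$ by definition lies in $Y$, we get $\dim CP(Y)\leq \dim\mathrm{Sing}(Y)<\dim Y$. Thus the entire problem reduces to the local statement that, when $d\geq 2$, a \emph{smooth} point of $Y$ cannot be a cone point.

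To prove this local statement, suppose $p\in Y$ is a smooth point, so $Y$ has a well-defined embedded tangent hyperplane $T_pY\subset \mathbb{P}^{n}$. The lines through $p$ form a $\mathbb{P}^{n-1}$, inside which those contained in $T_pY$ form a proper linear subspace $\mathbb{P}^{n-2}$; a line $\ell$ outside this subspace meets $Y$ at $p$ with intersection multiplicity exactly $1$. Moreover the lines through $p$ that happen to lie entirely on $Y$ form a proper closed subset of $\mathbb{P}^{n-1}$, since otherwise every line through $p$ would lie in $Y$, forcing $Y=\mathbb{P}^{n}$, absurd for a hypersurface. Hence I can choose a line $\ell$ through $p$ that is transverse to $Y$ at $p$ and is not contained in $Y$.

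For such a line, $\ell\cap Y$ is a zero-dimensional scheme of length $d$ by B\'ezout, and transversality forces its length at $p$ to be exactly $1$. Since $d\geq 2$, the residual length $d-1\geq 1$ is supported away from $p$, so there exists a point $q\in \ell\cap Y$ with $q\neq p$. If $p$ were a cone point, the definition would force $\overline{pq}=\ell\subseteq Y$, contradicting $\ell\not\subseteq Y$. Therefore $p$ is not a cone point, which establishes $CP(Y)\subseteq\mathrm{Sing}(Y)$ and completes the argument. The step I expect to be the main obstacle is precisely the middle one: producing a line through $p$ that is simultaneously transverse at $p$ and not contained in $Y$, and verifying the multiplicity bookkeeping; this is exactly where the hypothesis $d\geq 2$ is used, through the residual degree $d-1\geq 1$.

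I also record an alternative, purely algebraic route that may give a cleaner global picture. Writing $Y=V(F)$ with $F$ homogeneous of degree $d$, one checks directly that $p$ is a cone point if and only if the directional derivative $\sum_i p_i\,\partial F/\partial x_i$ vanishes identically as a polynomial in $x$ (equivalently, in coordinates with $p=[0:\cdots:0:1]$, that $F$ is independent of the last variable). Since this is a system of linear conditions on $p$, it exhibits $CP(Y)$ as a linear projective subspace $\mathbb{P}(V)$, and Euler's identity shows $\mathbb{P}(V)\subseteq Y$ automatically. If $\dim\mathbb{P}(V)=\dim Y$, then by irreducibility $Y=\mathbb{P}(V)$ would be a linear space of degree $1$, contradicting $d\geq 2$; this yields the same conclusion. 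In this version the delicate point migrates to justifying the ``only if'' direction of the characterization, which again requires handling the fact that the cone condition quantifies only over $q\in Y$.
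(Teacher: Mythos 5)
Your proposal is correct, and it packages the argument differently from the paper. The paper's proof is a two-line genericity argument: a \emph{general} line $L$ meets $Y$ transversely in $d\geq 2$ distinct points $p_{1},\dots,p_{d}$, and no $p_{i}$ can be a cone point, since then $L=\overline{p_{i}p_{j}}$ would lie in $Y$; this shows a general point of $Y$ is not a cone point, and the dimension bound follows (implicitly using that $CP(Y)$ is closed, or at least that its complement is dense). Your first route uses the same Bézout mechanism --- a line meeting a degree-$d\geq 2$ hypersurface residually in a second point forces a cone point to swallow the line --- but runs it pointwise at an \emph{arbitrary} smooth point, yielding the stronger containment $CP(Y)\subseteq \mathrm{Sing}(Y)$; this buys a cleaner conclusion and sidesteps the genericity/density bookkeeping the paper leaves implicit. (One simplification: transversality at $p$ already forces $\ell\not\subseteq Y$, since any line through $p$ contained in $Y$ lies in $T_{p}Y$; your separate closed-subset argument for lines on $Y$ is redundant, though harmless.) Your second, algebraic route is genuinely different from the paper: identifying $CP(Y)$ with the linear space $\{p:\sum_{i}p_{i}\,\partial F/\partial x_{i}\equiv 0\}$, which lies in $Y$ by Euler's identity and so cannot have dimension $\dim Y$ unless $Y$ is a hyperplane. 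This gives more --- $CP(Y)$ is a projective linear subspace, not merely small --- at the cost of the ``only if'' verification you flag, which does go through over $\mathbb{C}$: for any direction through $p$, the restriction of $F$ to that line either vanishes identically or has a zero away from $p$ (unless all its roots sit at $p$, in which case the relevant coefficients vanish anyway), so the cone condition over $q\in Y$ suffices to kill all coefficients involving the vertex variable.
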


\begin{proof}
Let $L\subset \mathbb{P}^{n}$ be a general line. Then $L$ intersects $Y$ transversely in $d$ distinct points $p_{1}, \cdots, p_{d}$. If $d\geq 2$, then $p_{i}$ are not cone points. 
\end{proof}

In the following we shall prove that $NL_{i}$ is not a hyperplane. 

\begin{lemma}\label{nothp}
  For every $i$, $NL_{i}$is not a hyperplane.
\end{lemma}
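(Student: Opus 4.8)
The plan is to exploit the large symmetry of the ambient space $\mathbb{P}^{34}=\mathbb{P} \mathrm{H}^{0}(\mathcal{O}_{\mathbb{P}^{3}}(4H))$. Writing $V=\mathrm{H}^{0}(\mathcal{O}_{\mathbb{P}^{3}}(4H))$, the group $G=PGL_{4}=\mathrm{Aut}(\mathbb{P}^{3})$ acts linearly on $V$, hence projectively linearly on $\mathbb{P}^{34}$. Since the Picard rank of a smooth quartic and the property of being singular are both preserved by projective automorphisms, the locus $NL=\bigcup_{i}NL_{i}$ is $G$-stable as a set. First I would upgrade this to the statement that each irreducible component $NL_{i}$ is individually $G$-invariant. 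For fixed $g\in G$, the image $g\cdot NL_{i}$ is an irreducible closed hypersurface contained in $NL$; since $NL$ is a countable union of its components and an irreducible variety over $\mathbb{C}$ is not a countable union of proper closed subsets, $g\cdot NL_{i}$ must lie in, hence by dimension equal, a single component $NL_{j(g)}$. Thus $G\cdot NL_{i}$ is a union of components and its closure $Z$ is irreducible. If $Z$ had dimension $34$ it would be all of $\mathbb{P}^{34}$, so by Chevalley's theorem $G\cdot NL_{i}$ would contain a dense open set; but $G\cdot NL_{i}\subset NL$ is contained in a countable union of codimension-one subvarieties and cannot cover an open set. Hence $\dim Z=33$ and $Z=NL_{i}$, giving $G\cdot NL_{i}=NL_{i}$.

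It then suffices to show there is no $G$-invariant hyperplane in $\mathbb{P}(V)$. A hyperplane is the zero locus of a nonzero linear form, i.e.\ it corresponds to a point $[\ell]\in\mathbb{P}(V^{*})$, and its $G$-invariance is equivalent to $[\ell]$ being a $G$-fixed point, that is, to $\mathbb{C}\ell$ being a one-dimensional $SL_{4}$-subrepresentation of $V^{*}$. But $V=\mathrm{Sym}^{4}\big((\mathbb{C}^{4})^{*}\big)$ and $V^{*}=\mathrm{Sym}^{4}(\mathbb{C}^{4})$ are irreducible $SL_{4}$-representations of dimension $\binom{7}{3}=35>1$, so neither contains an invariant line. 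Therefore no $G$-invariant hyperplane exists, and in particular no $NL_{i}$ is a hyperplane.

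The main obstacle is the first step, namely verifying that the connected group $G$ preserves each individual component rather than merely the union $NL$; this is exactly where the countability of the set of components and the dimension count are needed, after which the representation-theoretic core is immediate. As an independent check on the one distinguished component, the discriminant of quartic surfaces in $\mathbb{P}^{3}$ is an irreducible hypersurface of degree $4(4-1)^{3}=108$ and so is manifestly not a hyperplane; the argument above recovers this uniformly, together with all genuine Noether--Lefschetz divisors.
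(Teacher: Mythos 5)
Your proof is correct, and its decisive step is genuinely different from the paper's. Both arguments rest on the same first ingredient: each individual component $NL_{i}$ is preserved by $\mathrm{PGL}_{4}$. The paper disposes of this in one sentence (connectedness of $\mathrm{PGL}_{4}$ plus invariance of Picard rank and singularity), whereas you give the full countability argument---$g\cdot NL_{i}$ lands in a single component because an irreducible variety over $\mathbb{C}$ is not a countable union of proper closed subsets, and the orbit closure is irreducible of dimension $33$ by the Chevalley/constructibility dichotomy---which is a careful expansion of a point the paper treats tersely. The divergence is in how invariance excludes hyperplanes. The paper argues geometrically: the hypersurface $\overline{NL_{i}}$ must meet the Veronese variety $v\bigl((\mathbb{P}^{3})^{*}\bigr)$ of fourth powers $f^{4}$ of linear forms; transitivity of $\mathrm{PGL}_{4}$ on that variety together with invariance forces $\overline{NL_{i}}\supset v\bigl((\mathbb{P}^{3})^{*}\bigr)$, and since the Veronese is non-degenerate, $\overline{NL_{i}}$ cannot be a hyperplane. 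You instead argue representation-theoretically: a $\mathrm{PGL}_{4}$-invariant hyperplane in $\mathbb{P}(V)$ would give an $\mathrm{SL}_{4}$-invariant line in $V^{*}\cong \mathrm{Sym}^{4}\mathbb{C}^{4}$, impossible because that representation is irreducible of dimension $35$. Your route is shorter at the finish and proves a cleaner, stronger-sounding statement (there are no $\mathrm{PGL}_{4}$-invariant hyperplanes at all, and the same method would control invariant hypersurfaces of higher degree via the absence of $\mathrm{SL}_{4}$-invariants in symmetric powers of $V^{*}$); the paper's route uses only the more elementary input that fourth powers of linear forms span $\mathrm{H}^{0}(\mathcal{O}_{\mathbb{P}^{3}}(4H))$, and yields the concrete geometric by-product that the closure of every Noether--Lefschetz component contains the locus of quadruple planes.
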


\begin{proof}
Let $v: (\mathbb{P}^{3})^{*} \hookrightarrow \mathbb{P}^{34}$ be the Veronese embedding sending a linear form $f$ to $f^{4}$.  Note that the closure $\overline{NL_{i}}$ of $NL_{i}$ is a hypersurface in $\mathbb{P}^{34}$. Then $\overline{NL_{i}}\cap v((\mathbb{P}^{3})^{*})$ contains at least one point $p$. The orbit of $p$ under $\mathbb{P}\mathrm{GL}_{4}$ is $v((\mathbb{P}^{3})^{*})$. Since $\mathbb{P}\mathrm{GL}_{4}$ is connected and Picard rank of quartic surfaces are preserved under $\mathbb{P}\mathrm{GL}_{4}$ action, the orbit of $NL_{i}$ must be $NL_{i}$. Hence $\overline{NL_{i}}$ contains $v((\mathbb{P}^{3})^{*})$. Since the Veronese embedding is non-degenerate, $\overline{NL_{i}}$ cannot be a hyperplane.
\end{proof}
By Lemma \ref{few} and Lemma \ref{nothp}, $CP_{i}$ has codimension $\geq 2$ in $\mathbb{P}^{34}$ for every $i$. Since $CP$ is a contable union of $CP_{i}$, there exists a line $L\subset \mathbb{P}^{34}$ that satisfies Condition \ref{condition2}. Furthermore, such $L$ is very general. Therefore we summarize the work of this section in the following proposition.

\begin{proposition}\label{pencil}
A very general pencil $L\subset \mathbb{P}^{34}$ satisfies Condition \ref{condition1}.
\end{proposition}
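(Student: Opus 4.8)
The plan is to reduce to the equivalent Condition \ref{condition2} and then run a dimension count in the Grassmannian of lines. As explained in the discussion preceding the statement, Condition \ref{condition1} holds for a pencil $L$ if and only if $L \cap CP = \emptyset$, where $CP = \bigcup_{i=1}^{\infty} CP_{i}$ and $CP_{i}=CP(NL_{i})$. So it suffices to produce a very general line in $\mathbb{P}^{34}$ that avoids this countable union of cone-point loci.

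First I would bound $\dim CP_{i}$. Each $NL_{i}$ is an irreducible hypersurface in $\mathbb{P}^{34}$, and by Lemma \ref{nothp} it is not a hyperplane, so $\deg NL_{i}\geq 2$. Lemma \ref{few} then yields $\dim CP_{i} < \dim NL_{i} = 33$, i.e.\ $CP_{i}$ has codimension $\geq 2$ in $\mathbb{P}^{34}$. This is exactly the estimate already recorded in the text; the one conceptually substantive input is Lemma \ref{nothp}, whose proof relies on the $\mathbb{P}\mathrm{GL}_{4}$-invariance of the Noether--Lefschetz loci together with nondegeneracy of the Veronese embedding.

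Next I would translate ``codimension $\geq 2$'' into a statement about lines. Writing $\mathbb{G}=\mathbb{G}(1,34)$ for the Grassmannian of lines in $\mathbb{P}^{34}$, so that $\dim \mathbb{G}=66$, I consider for a fixed closed $Z\subset \mathbb{P}^{34}$ the incidence variety $I_{Z}=\{(p,\ell) : p\in Z,\ p\in \ell\}$. The fibre of the projection $I_{Z}\to Z$ over a point is the $\mathbb{P}^{33}$ of lines through that point, so $\dim I_{Z}=\dim Z + 33$, and the image $W_{Z}\subset \mathbb{G}$ of the second projection satisfies $\dim W_{Z}\leq \dim Z + 33$. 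Taking $Z=CP_{i}$, so that $\dim Z\leq 32$, this gives $\dim W_{i}\leq 65 < 66$, whence each $W_{i}$ is a \emph{proper} closed subvariety of $\mathbb{G}$.

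Finally, a line $\ell$ meets $CP_{i}$ precisely when $\ell\in W_{i}$, so any $\ell\in \mathbb{G}\setminus \bigcup_{i} W_{i}$ is disjoint from every $CP_{i}$ and hence from $CP$, satisfying Condition \ref{condition2}. Since $\{W_{i}\}_{i\geq 1}$ is a countable family of proper closed subvarieties of the irreducible variety $\mathbb{G}$, a very general point of $\mathbb{G}$ lies outside their union, and such a line satisfies Condition \ref{condition1}. The argument is essentially bookkeeping once the two lemmas are in hand; the only step requiring genuine care is the dimension count showing that codimension $\geq 2$ in $\mathbb{P}^{34}$ (rather than merely codimension $\geq 1$) is what forces $W_{i}$ to be a proper subvariety of $\mathbb{G}$ --- which is exactly why both Lemma \ref{few} and Lemma \ref{nothp} are needed.
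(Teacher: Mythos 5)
Your proof is correct and follows essentially the same route as the paper: reduce to Condition \ref{condition2}, use Lemma \ref{nothp} together with Lemma \ref{few} to conclude that each $CP_{i}$ has codimension $\geq 2$ in $\mathbb{P}^{34}$, and then observe that a very general line avoids the countable union $CP$. The only difference is that you spell out, via the incidence variety in $\mathbb{G}(1,34)$, the standard dimension count that the paper leaves implicit when it asserts that such a line exists and is very general; this is a welcome clarification but not a different argument.
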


\section{Description on quartic surfaces}\label{section4}

Let $E$ be an exceptional vector bundle on $\mathbb{P}^{3}$ of rank $2d^{2}+1$ and degree $d$. By Proposition \ref{pencil}, we may choose a pencil $L\subset \mathbb{P}^{34}$ that satisfies Condition \ref{condition1}. As outlined in the introduction, we will construct a bundle $E_{X}$ on each member $X$ of this pencil, such that the following proposition holds. 

\begin{proposition}[Proposition \ref{EXdunique}]\label{EXduniquesimple}
On each $X\in L$, there exists a bundle $E_{X}$ such that $E|_{X}\cong E_{X}$. 
\end{proposition}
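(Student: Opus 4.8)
The plan is to define $E_X$ on the generic members of the pencil first, where stability is available and forces uniqueness, and then to extend the definition across the countably many special members while maintaining the isomorphism $E|_X \cong E_X$. The content of the proposition is really that $E_X$ can be taken to depend only on $X$ (and the numerics $r = 2d^2+1$, $d$), not on the global bundle $E$.

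Since $L$ is very general, it meets the Noether--Lefschetz locus $NL$ in only countably many points, so all but countably many members $X_t$ are smooth of Picard rank $1$. For such an $X = X_t$, Lemma \ref{stablespherical} shows that $E|_X$ is spherical, and by Lemma \ref{sphericalunique} there is at most one spherical bundle on $X$ with Chern character $v = \mathrm{ch}(E|_X)$, where $v$ is determined by $\mathrm{ch}(E)$ and the class of $X$ alone. I would therefore \emph{define} $E_X$ to be this unique spherical bundle; on such $X$ the isomorphism $E|_X \cong E_X$ is immediate and $E_X$ depends only on $X$.

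The real work is at a special member $X = X_t$ (Picard rank $>1$, or singular), where stability fails and Lemma \ref{sphericalunique} no longer applies; here Lemma \ref{simplerigid} only tells me that $E|_X$ is simple and rigid. To pin it down I would exploit Condition \ref{condition1}: for each $s \in L_t$ the base curve $C_s = X_s \cap X_t$ of the pencil $L_{s,t}$ lies on a Picard-rank-$1$ $K3$ surface $X'_s \in L_{s,t}$, on which $E|_{X'_s}$ is the spherical bundle already determined above. Hence each restriction $E|_{C_s} \cong (E|_{X'_s})|_{C_s}$ is fixed by the numerics alone, and as $s$ ranges over $L_t$ the curves $C_s$ sweep out $X_t$ (away from the base locus of $L_t$). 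I would assemble $E_X$ from this covering family and prove $E|_X \cong E_X$ by the rigidity-spreading mechanism of Lemma \ref{locally}, applied either to the family of curves $\{C_s\}_{s \in L_t}$ or to $\pi$ over an arc in $L$ through $t$: its two hypotheses are fiberwise isomorphism, supplied by the generic case, and constancy of $\dim \mathrm{Ext}^1$, which holds because every fiber is rigid (Lemma \ref{simplerigid}) with the same Chern character, with Lemma \ref{Mukai} controlling rigidity across the relevant extensions. The isomorphism $E|_X \cong E_X$ should then propagate from the generic locus onto the special fiber.

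The main obstacle I expect is exactly this special locus: without Picard rank $1$ one loses the clean uniqueness of Lemma \ref{sphericalunique}, so the determinacy of $E|_{X_t}$ must be recovered indirectly from the covering curves and from rigidity. Two technical points will need care. First, the center $C$ of the blow-up $\nu$ and its exceptional divisor must be handled so that $\nu^* E$ genuinely restricts to $E|_{X_t}$ on each fiber and so that the curves $C_s$ meet $C$ in a controlled way. Second, to invoke Lemma \ref{locally} one must actually verify that $\dim \mathrm{Ext}^1_{X_t}(E|_{X_t}, E|_{X_t})$ is constant in $t$; it is this constancy, rather than any stability input, that ultimately forces the special fiber to be the spherical limit instead of some non-stable bundle sharing the Chern character $v$.
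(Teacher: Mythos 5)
Your treatment of the generic members (smooth, Picard rank $1$) matches the paper's, and you correctly identify that the curves $C_{s}$ from Condition \ref{condition1} are the tool for the special members. But there is a genuine gap at the crucial step: Lemma \ref{locally} does \emph{not} propagate the fiberwise isomorphisms $E|_{C_{s}}\cong E_{X,d}|_{C_{s}}$ to an isomorphism on $X$. It only produces isomorphisms over open subsets of the base $L_{t}$, i.e.\ it shows that $v^{*}(E|_{X})$ is a \emph{modification} of $v^{*}(E_{X,d})$ in the sense of Definition \ref{modification}. Whether these local isomorphisms glue globally is a nonabelian $\mathrm{H}^{1}$ problem, and it is genuinely obstructed here: by Lemma \ref{split} the restriction $E_{X,d}|_{C_{s}}\cong(\mathrm{H}^{0}(\mathcal{O}_{X}((d-4)H))\otimes\mathcal{O}_{C_{s}})\oplus F_{C_{s},d}$ is far from simple when $d\geq 4$, so Lemma \ref{fiberwisesimple} does not apply and nontrivial modifications cannot be excluded cheaply. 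Ruling them out is the bulk of the paper's proof (Steps 3--5): one takes the relative Harder--Narasimhan filtration, uses Mukai's Lemma and Lemma \ref{fiberwisetrivial} to show the HN factors of any rigid modification are twists by multiples of the fiber class, and then kills the twists using the invariant $\mathrm{h}^{1}(R^{1}p_{*}\mathcal{G})$ computed in Step 4 together with triviality of the restrictions to the exceptional divisor $\Sigma$. Your proposal contains no substitute for this argument; the phrase ``Lemma \ref{Mukai} controlling rigidity across the relevant extensions'' gestures at it but supplies nothing.

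Two further problems. The alternative you suggest --- applying Lemma \ref{locally} to $\pi$ over an arc in $L$ through $t$ --- is circular: hypothesis (1) of that lemma demands $E|_{X_{t'}}\cong E_{X_{t'},d}$ for \emph{every} $t'$ in the arc, including the special point $t$ itself, which is exactly what is to be proven; and two flat families agreeing away from one fiber can still differ at that fiber (elementary modification along it), which is the same obstruction in disguise. Also, your justification of the $\mathrm{ext}^{1}$-constancy for the curve family (``every fiber is rigid'') is incorrect: restrictions to the curves $C_{s}$ are never rigid, and the paper instead proves constancy by a long exact sequence computation using $\mathrm{Ext}^{1}_{X}(E,E)=0$ together with Riemann--Roch. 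Finally, on the special members you never actually define $E_{X}$ independently of $E$ (``assembling from the covering family'' is itself an unsolved gluing problem); the paper sidesteps this by defining $E_{X,d}$ explicitly on \emph{every} $X$ as the kernel of the evaluation map $\mathcal{O}_{X}\otimes\mathrm{H}^{0}(\mathcal{O}_{X}(dH))\to\mathcal{O}_{X}(dH)$, and this explicit, $E$-independent description is precisely what the later proof of Theorem \ref{main} requires.
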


Proposition \ref{EXduniquesimple} is the core of this paper, we outline the proof here. We will explictly construct $E_{X}$ first. For any $t\in L$ we cover $X_{t}$ by a pencil $L'$ of curves $\{C_{s}\}_{s\in L'}$. By the choice of the pencil $L$ (Condition \ref{condition1}), $E|_{X}$ and $E_{X}$ can be made isomorphic fiberwise in the family $\{C_{s}\}_{s\in L'}$ (Lemma \ref{split}). We will extend this isomorphism to an open neighborhood (Lemma \ref{locally}), then the bundles on the total family are glued from the local families. Then we will show this glueing is unique by carefully analyzing the relative Harder-Narasimhan filtration.

In this section we fix any $t\in L$, let $X=X_{t}$, $E=E|_{X}$, $L'=L_{t}$ and $L'_{s}=L_{s,t}$ as in Condition \ref{condition1}. Consider the pencil of curves $\{C_{s}=X\cap X_{s}\}_{s\in L'}$ on $X$. The base locus of this pencil is 64 points $q_{1}, \cdots, q_{64}$. Let $\widetilde{X}=\mathrm{BL}_{q_{1}, \cdots, q_{64}}X$ be the total space of this pencil of curves and $\Sigma$ be the exceptional divisor. Then $\widetilde{X}$ admits morphisms $v: \widetilde{X} \longrightarrow X$ and $p: \widetilde{X} \longrightarrow L'$.

\subsection{Modifications}
We define the following notion and discuss its properties when applied to $\widetilde{X}$ in this subsection.

\begin{definition}\label{modification}
  Let $f: Z \rightarrow S$ be a flat projective morphism and $M$ be a vector bundle on $Z$. A vector bundle $M'$ is called a \emph{modification} of $M$, if for all $s\in S$, there exists an open neighborhood $U_{s}$, such that $M'|_{f^{-1}(U_{s})}\cong M|_{f^{-1}(U_{s})}$. A modification $M'$ is called \emph{trivial} if $M'=M\otimes f^{*}\mathcal{L}$ for some line bundle $\mathcal{L}$ on $S$. 
\end{definition}

\begin{notation}\label{endomorphismsheaf}
  Under the settings in Definition \ref{modification}, we let $\mathcal{A}_{M}:=f_{*}\underline{End}(M)$ and $\mathcal{G}_{M}\subset \mathcal{A}_{M}$ be the sheaf of groups whose local sections are multiplicative invertible sections of $\mathcal{A}_{M}$.
\end{notation}
Suppose $\mathrm{End}_{f^{-1}(s)}(M|_{f^{-1}(s)})$ in Definition \ref{modification} has constant dimension for all $s\in S$. Then modifications of $M$ are parametrized by the (nonabelian) cohomology set $ \mathrm{H}^{1}(S, \mathcal{G}_{M})$. We first note the following two lemmas about modifications.

\begin{lemma}\label{fiberwisesimple}
  Using the notations in Definition \ref{modification}, if $M$ is fiberwise simple, i.e.
  $$\mathrm{End}_{f^{-1}(s)}(M_{f^{-1}(s)})=\mathbb{C}$$
  for all $s\in S$, then all modifications of $M$ are trivial.
\end{lemma}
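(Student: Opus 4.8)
The plan is to compute the sheaf of groups $\mathcal{G}_M$ explicitly under the simplicity hypothesis and show it is just the multiplicative group $\mathcal{O}_S^{\times}$, so that the nonabelian cohomology set $\mathrm{H}^{1}(S, \mathcal{G}_M)$ parametrizing modifications collapses to $\mathrm{Pic}(S)$, with every class realized by a trivial modification. First I would analyze $\mathcal{A}_M = f_{*}\underline{End}(M)$. Since $\underline{End}(M)$ is a vector bundle on $Z$, flat over $S$, and by hypothesis the fiber dimension $\dim \mathrm{Hom}_{f^{-1}(s)}(M_{f^{-1}(s)}, M_{f^{-1}(s)}) = 1$ is constant in $s$, cohomology and base change shows that $\mathcal{A}_M$ is a line bundle on $S$ whose formation commutes with base change, with fiber $\mathrm{End}(M_{f^{-1}(s)}) = \mathbb{C}$ at each $s$. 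The identity endomorphism gives a global section $\mathrm{id} \in \mathrm{H}^{0}(S, \mathcal{A}_M)$ that restricts to $\mathrm{id}_{M_{f^{-1}(s)}} \neq 0$ on every fiber, hence is nowhere vanishing and trivializes $\mathcal{A}_M$; sending $\mathrm{id} \mapsto 1$ carries the fiberwise multiplication to ordinary multiplication, so in fact $\mathcal{A}_M \cong \mathcal{O}_S$ as a sheaf of $\mathcal{O}_S$-algebras.

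Next I would deduce that $\mathcal{G}_M$, the subsheaf of multiplicatively invertible sections of $\mathcal{A}_M$, is identified with $\mathcal{O}_S^{\times} = \mathbb{G}_m$. In particular $\mathcal{G}_M$ is \emph{abelian}, which is the decisive simplification: the pointed set $\mathrm{H}^{1}(S, \mathcal{G}_M)$ becomes the group $\mathrm{H}^{1}(S, \mathcal{O}_S^{\times}) = \mathrm{Pic}(S)$. Concretely, under the trivialization above an automorphism of $M$ over $f^{-1}(U)$ is just multiplication by a unit $g \in \mathcal{O}_S^{\times}(U)$, acting on $M|_{f^{-1}(U)}$ as the scalar $f^{*}g$.

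Finally I would match the gluing. Given a modification $M'$, choose isomorphisms $\phi_i : M'|_{f^{-1}(U_i)} \xrightarrow{\sim} M|_{f^{-1}(U_i)}$ on a cover; the transition data $\phi_i \phi_j^{-1}$ are sections of $\mathcal{G}_M(U_{ij})$, i.e.\ units $g_{ij} \in \mathcal{O}_S^{\times}(U_{ij})$, forming a cocycle whose class is a line bundle $\mathcal{L} \in \mathrm{Pic}(S)$. Since $\phi_i \phi_j^{-1}$ acts as multiplication by $f^{*}g_{ij}$, the bundle $M'$ is reconstructed by gluing the pieces $M|_{f^{-1}(U_i)}$ along $f^{*}g_{ij}$, which is exactly $M \otimes f^{*}\mathcal{L}$; hence $M'$ is a trivial modification. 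I expect the main obstacle to be the two bookkeeping points that make this work rather than any deep input: proving that $\mathcal{A}_M$ is trivial \emph{as an algebra} (so that $\mathcal{G}_M$ is genuinely $\mathbb{G}_m$ and the cohomology is abelian), and then verifying cleanly that a scalar cocycle $\{f^{*}g_{ij}\}$ glues $M$ into $M \otimes f^{*}\mathcal{L}$ rather than some twist one must still recognize as a pullback from $S$.
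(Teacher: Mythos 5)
Your proposal is correct and follows exactly the paper's route: identify $\mathcal{A}_{M}\cong\mathcal{O}_{S}$ (hence $\mathcal{G}_{M}\cong\mathcal{O}_{S}^{*}$) from fiberwise simplicity, so that $\mathrm{H}^{1}(S,\mathcal{G}_{M})=\mathrm{Pic}(S)$ and every modification is a twist $M\otimes f^{*}\mathcal{L}$. The paper's proof is just a one-line version of this; the base-change argument for $\mathcal{A}_{M}\cong\mathcal{O}_{S}$ and the cocycle bookkeeping identifying each class with $M\otimes f^{*}\mathcal{L}$ are precisely the details it leaves implicit.
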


\begin{proof}
Note that $\mathcal{A}_{M}=\mathcal{O}_{S}$ and $\mathcal{G}_{M}=\mathcal{O}_{S}^{*}$, we have $ \mathrm{H}^{1}(S, \mathcal{G}_{M})= \mathrm{H}^{1}(S, \mathcal{O}_{S}^{*})=\mathrm{Pic}(S)$. 
\end{proof}

\begin{lemma}\label{fiberwisetrivial}
  Let $Q$ be a rigid and fiberwise simple vector bundle on $\widetilde{X}$ and $M=Q^{\oplus p}$ for some $p>0$. Then all rigid modifications of $M$ with respect to $p: \widetilde{X} \longrightarrow L'$ are trivial. In particular, for every line bundle $\mathcal{L'}$ on $L'$, all rigid modifications of $p^{*}\mathcal{L'}^{\oplus p}$ are trivial.
\end{lemma}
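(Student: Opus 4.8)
The plan is to reinterpret rigid modifications of $M=Q^{\oplus p}$ as rigid rank-$p$ vector bundles on $L'\cong\mathbb{P}^1$, and then to show that rigidity on $\widetilde{X}$ is strictly stronger than rigidity on $\mathbb{P}^1$, the gap being governed by $\mathrm{Ext}^1_{\widetilde X}(Q,Q\otimes p^{*}\mathcal{O}_{L'}(1))$. First I would compute $\mathcal{A}_M$ and $\mathcal{G}_M$. Since $Q$ is fiberwise simple, cohomology and base change give $p_{*}\underline{End}(Q)\cong\mathcal{O}_{L'}$, so $\mathcal{A}_M=p_{*}\underline{End}(Q^{\oplus p})\cong\mathrm{Mat}_{p}(\mathcal{O}_{L'})$ and $\mathcal{G}_M\cong\underline{\mathrm{GL}}_{p,L'}$. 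By the parametrization of modifications recalled after Notation \ref{endomorphismsheaf}, modifications of $M$ are then classified by $\mathrm{H}^{1}(L',\underline{\mathrm{GL}}_{p})$, that is, by rank-$p$ vector bundles $V$ on $L'$, the modification attached to $V$ being $M_{V}:=Q\otimes p^{*}V$; the trivial modifications are exactly those with $V\cong\mathcal{L}^{\oplus p}$. By Grothendieck's theorem $V\cong\bigoplus_{i=1}^{p}\mathcal{O}_{L'}(a_{i})$, so it suffices to prove that rigidity of $M_{V}$ forces all $a_{i}$ equal.

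Next I would reduce rigidity to a cohomological condition. Decomposing, $\mathrm{Ext}^{1}_{\widetilde X}(M_{V},M_{V})=\bigoplus_{i,j}\mathrm{Ext}^{1}_{\widetilde X}(Q,Q\otimes p^{*}\mathcal{O}_{L'}(a_{j}-a_{i}))$, and the Leray spectral sequence for $p$ together with $p_{*}\underline{End}(Q)=\mathcal{O}_{L'}$ gives, for each $k$, a short exact sequence $0\to\mathrm{H}^{1}(\mathbb{P}^{1},\mathcal{O}(k))\to\mathrm{Ext}^{1}_{\widetilde X}(Q,Q\otimes p^{*}\mathcal{O}(k))\to\mathrm{H}^{0}(\mathbb{P}^{1},R^{1}p_{*}\underline{End}(Q)\otimes\mathcal{O}(k))\to 0$. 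Suppose the $a_{i}$ are not all equal and let $k_{0}=\max_{i,j}(a_{j}-a_{i})\geq 1$. If $k_{0}\geq 2$, then the summand with $k=-k_{0}\leq -2$ contains $\mathrm{H}^{1}(\mathbb{P}^{1},\mathcal{O}(-k_{0}))\neq 0$, so $M_{V}$ is not rigid. If $k_{0}=1$, then some difference equals $1$ and $\mathrm{Ext}^{1}_{\widetilde X}(Q,Q\otimes p^{*}\mathcal{O}(1))$ appears as a direct summand of $\mathrm{Ext}^{1}_{\widetilde X}(M_{V},M_{V})$. Thus everything reduces to showing this last group is nonzero.

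The heart of the argument, and the step I expect to be the main obstacle, is therefore to prove $\mathrm{Ext}^{1}_{\widetilde X}(Q,Q\otimes p^{*}\mathcal{O}_{L'}(1))\neq 0$. Writing $F=p^{-1}(s)$ for a smooth fiber (so that $p^{*}\mathcal{O}(1)\cong\mathcal{O}_{\widetilde X}(F)$ and, since $p(F)$ is a point, $\mathcal{O}(F)|_{F}\cong\mathcal{O}_{F}$ and $\omega_{\widetilde X}|_{F}\cong\omega_{F}$ by adjunction), I would restrict $\underline{End}(Q)$ along $0\to\underline{End}(Q)\to\underline{End}(Q)(F)\to\underline{End}(Q_{F})\to 0$. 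Because $Q$ is rigid, $\mathrm{H}^{1}(\widetilde X,\underline{End}(Q))=0$, so $\mathrm{Ext}^{1}_{\widetilde X}(Q,Q(F))=\ker\big(\phi\colon\mathrm{H}^{1}(F,\underline{End}(Q_{F}))\to\mathrm{H}^{2}(\widetilde X,\underline{End}(Q))\big)$, and it suffices to show $\phi$ is not injective. By Serre duality on $\widetilde X$ and on $F$, using that $\underline{End}(Q)$ is self-dual, $\phi$ is dual to the restriction map $\psi\colon\mathrm{H}^{0}(\widetilde X,\underline{End}(Q)\otimes\omega_{\widetilde X})\to\mathrm{H}^{0}(F,\underline{End}(Q_{F})\otimes\omega_{F})$, so it is enough to prove $\psi$ is not surjective. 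Here I would invoke the trace: composing $\psi$ with the split surjection $\mathrm{tr}\colon\mathrm{H}^{0}(F,\underline{End}(Q_{F})\otimes\omega_{F})\twoheadrightarrow\mathrm{H}^{0}(F,\omega_{F})$ yields a map that factors as restriction of $\mathrm{tr}\colon\mathrm{H}^{0}(\widetilde X,\underline{End}(Q)\otimes\omega_{\widetilde X})\to\mathrm{H}^{0}(\widetilde X,\omega_{\widetilde X})$ to $F$, whose image therefore lies in the image of $\mathrm{H}^{0}(\widetilde X,\omega_{\widetilde X})\to\mathrm{H}^{0}(F,\omega_{F})$. Since $\widetilde X$ is the blow-up of a $K3$ surface, $h^{0}(\widetilde X,\omega_{\widetilde X})=p_{g}(\widetilde X)=p_{g}(X)=1$, while $F$ has genus $33$, so $h^{0}(F,\omega_{F})=33>1$. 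Hence $\mathrm{tr}\circ\psi$ cannot surject onto $\mathrm{H}^{0}(F,\omega_{F})$, so $\psi$ is not surjective and $\phi$ is not injective.

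This yields $\mathrm{Ext}^{1}_{\widetilde X}(Q,Q\otimes p^{*}\mathcal{O}(1))\neq 0$, so every non-balanced $V$ produces a non-rigid $M_{V}$; combined with the previous paragraph, any rigid modification must have all $a_{i}$ equal, i.e.\ $V\cong\mathcal{L}^{\oplus p}$, and is therefore trivial. The ``in particular'' clause is the special case $Q=p^{*}\mathcal{L}'$: such $Q$ is fiberwise simple, and it is rigid because $\mathrm{Ext}^{1}_{\widetilde X}(p^{*}\mathcal{L}',p^{*}\mathcal{L}')=\mathrm{H}^{1}(\widetilde X,\mathcal{O}_{\widetilde X})=\mathrm{H}^{1}(X,\mathcal{O}_{X})=0$, so the general statement applies.
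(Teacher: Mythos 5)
Your proof is correct, and its first half coincides with the paper's: you identify $\mathcal{A}_{M}\cong \mathrm{Mat}_{p}(\mathcal{O}_{L'})$ and $\mathcal{G}_{M}\cong \mathrm{GL}_{p}$, classify modifications by rank-$p$ bundles on $L'\cong\mathbb{P}^{1}$, apply Grothendieck splitting to write any modification as $\bigoplus_{i}Q(a_{i}F)$, and reduce to showing that unequal $a_{i}$ destroy rigidity. Where you genuinely diverge is the nonvanishing step. The paper treats every positive difference $d\geq 1$ uniformly: by Leray and the trace summand $\mathcal{O}_{\widetilde{X}}\subset \underline{End}(Q)$ it reduces to $\mathrm{H}^{0}(R^{1}p_{*}\mathcal{O}_{\widetilde{X}}\otimes\mathcal{O}_{L'}(d))\neq 0$, which it gets from the explicit relative Serre duality computation $R^{1}p_{*}\mathcal{O}_{\widetilde{X}}\cong p_{*}\mathcal{O}_{\widetilde{X}}(4H)^{*}(-1)\cong \mathcal{O}_{L'}(-1)^{\oplus 32}\oplus\mathcal{O}_{L'}(-2)$, which has sections after any twist $d\geq 1$. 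You instead split cases: differences of absolute value $\geq 2$ are killed cheaply by the $\mathrm{H}^{1}(\mathbb{P}^{1},\mathcal{O}(-k_{0}))$ edge of the Leray sequence (a summand the paper never needs), and the critical difference-one case is handled by an absolute rather than relative duality argument --- the restriction sequence along a smooth fiber, Serre duality on $\widetilde{X}$ and on $F$, and the trace splitting, reducing everything to $h^{0}(\omega_{\widetilde{X}})=1<33=h^{0}(\omega_{F})$. Your argument is essentially the Serre dual of the paper's: both rest on the trace summand plus the fact that the fibers have large genus while $\widetilde{X}$ has geometric genus one. The paper's route buys uniformity (no case analysis) and the precise shape of $R^{1}p_{*}\mathcal{O}_{\widetilde{X}}$, hence exact dimension counts; yours buys softness --- you never compute $p_{*}\omega_{\mathrm{rel}}$ or $R^{1}p_{*}\mathcal{O}_{\widetilde{X}}$, only the birational invariant $p_{g}$ and the bound $g(F)\geq 2$, so your argument transports verbatim to any genus $\geq 2$ fibration over $\mathbb{P}^{1}$ on a surface with $p_{g}\leq 1$. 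Two points to make explicit, both at the same level of rigor as the paper itself: a smooth fiber exists (a general member of the pencil $\{C_{s}\}$), and for singular members $X=X_{t}$ of the pencil $L$ the surface $\widetilde{X}$ is not literally a blown-up K3, although $h^{0}(\omega_{\widetilde{X}})=1$ still holds since $\omega_{X}\cong\mathcal{O}_{X}$ for any quartic; the paper's own use of $K_{\mathrm{rel}}=\Sigma+2F$ makes the same implicit assumption.
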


\begin{proof}
  Recall that $\widetilde{X}=\mathrm{BL}_{q_{1}, \cdots, q_{64}}X$ and $\Sigma$ is the exceptional divisor of $v: \widetilde{X} \rightarrow X$. Let $F$ be the fiber class of $p: \widetilde{X} \rightarrow L'$. 

  Since $Q$ is fiberwise simple, $\mathcal{A}_{M}=\mathcal{O}_{L'}\otimes \mathrm{Mat}_{p}$ and $\mathcal{G}_{M}=\mathrm{GL}_{p}$, we have
  $$ \mathrm{H}^{1}(L', \mathcal{G}_{M})= \mathrm{H}^{1}(L', \mathrm{GL}_{p})=\mathrm{Bun}_{p}(L'), $$
  where $\mathrm{Bun}_{p}(L')$ is the set of rank $p$ vector bundles on $L'$.
  Hence any modification $M'$ of $M$ is of the form $\oplus_{i=1}^{p}Q(a_{i}F)$. Now assume some $a_{i}\neq a_{j}$, say $d:=a_{1}-a_{2}\geq 1$, we need to show $M'$ is not rigid, i.e. $\mathrm{Ext}^{1}_{\widetilde{X}}(M',M')\neq 0$. It suffices to show $\mathrm{Ext}^{1}_{\widetilde{X}}(p^{*}\mathcal{O}_{L'}(a_{2}), p^{*}\mathcal{O}_{L'}(a_{1}))\neq 0$, since it is a summand of $\mathrm{Ext}^{1}_{\widetilde{X}}(M', M')$. By the Leray spectral sequence, it suffices to show
  $$\mathrm{H}^{0}(R^{1}p_{*}\underline{Hom}(Q(a_{2}), Q(a_{1})))= \mathrm{H}^{0}(R^{1}p_{*}\underline{End}(Q)\otimes \mathcal{O}_{L'}(d))\neq 0. $$
  Since $\mathcal{O}_{\widetilde{X}}$ is a summand of $\underline{End}(Q)$, it suffices to show
  $$ \mathrm{H}^{0}(R^{1}p_{*}\mathcal{O}_{\widetilde{X}}\otimes \mathcal{O}_{L'}(d))\neq 0.$$

   The relative canonical divisor of $p: \widetilde{X} \longrightarrow L'$ is $K_{rel}=K_{\widetilde{X}}-p^{*}K_{L'}=\Sigma+2F=4H+F$. By relative Serre duality, we have
  $$ R^{1}p_{*}\mathcal{O}_{\widetilde{X}}=p_{*}\mathcal{O}_{\widetilde{X}}(4H)^{*}(-1)=\mathcal{O}_{L'}(-1)^{\oplus 32}\oplus \mathcal{O}_{L'}(-2). $$
  Hence
  $$ \mathrm{H}^{0}(R^{1}p_{*}\mathcal{O}_{\widetilde{X}}\otimes \mathcal{O}_{L'}(d))= \mathrm{H}^{0}(\mathcal{O}_{L'}(d-1)^{\oplus 32}\oplus \mathcal{O}_{L'}(d-2)).$$
  By assumption, $d\geq 1$. Hence $d-1\geq 0$ and $ \mathrm{H}^{0}(R^{1}p_{*}\mathcal{O}_{\widetilde{X}}\otimes \mathcal{O}_{L'}(d))\neq 0$. 
\end{proof}

The next lemma shows rigid modifications can be checked fiberwise. 

\begin{lemma}
Let $E, F$ be simple rigid vector bundles on $X$. If $E|_{C_{s}}\cong F|_{C_{s}}$ for all $s\in L'$, then $F$ is a modification of $E$.
\end{lemma}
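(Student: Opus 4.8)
The plan is to verify the two hypotheses of Lemma \ref{locally} for the morphism $p: \widetilde{X} \longrightarrow L'$, applied to the pulled-back bundles $v^{*}E$ and $v^{*}F$ on $\widetilde{X}$; the conclusion of that lemma is precisely that $v^{*}F$ is a modification of $v^{*}E$ with respect to $p$, in the sense of Definition \ref{modification}. Hypothesis (1) is immediate: the fibers of $p$ are the strict transforms $p^{-1}(s)$ of the curves $C_{s}$, and $v$ restricts to a morphism $p^{-1}(s)\longrightarrow C_{s}$, so the given isomorphisms $E|_{C_{s}}\cong F|_{C_{s}}$ pull back to $(v^{*}E)|_{p^{-1}(s)}\cong (v^{*}F)|_{p^{-1}(s)}$ for every $s\in L'$. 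The substantive point is hypothesis (2), namely the constancy of $\mathrm{ext}^{1}_{p^{-1}(s)}((v^{*}E)|_{p^{-1}(s)}, (v^{*}E)|_{p^{-1}(s)})$, and this is where the rigidity of $E$ on the $K3$ surface $X$ enters.

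To establish this constancy I would show that the sheaf $R^{1}p_{*}\underline{End}(v^{*}E)$ on $L'$ is locally free. First, since $\underline{End}(v^{*}E)=v^{*}\underline{End}(E)$ and $v$ is a blow-up of finitely many smooth points, the projection formula together with $v_{*}\mathcal{O}_{\widetilde{X}}=\mathcal{O}_{X}$ and $R^{1}v_{*}\mathcal{O}_{\widetilde{X}}=0$ gives $\mathrm{H}^{i}(\widetilde{X}, \underline{End}(v^{*}E))\cong \mathrm{H}^{i}(X, \underline{End}(E))=\mathrm{Ext}^{i}_{X}(E,E)$. In particular, rigidity of $E$ yields $\mathrm{H}^{1}(\widetilde{X}, \underline{End}(v^{*}E))=\mathrm{Ext}^{1}_{X}(E,E)=0$. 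Next, the Leray spectral sequence for $p$ has one-dimensional fibers, so $R^{\geq 2}p_{*}=0$ and $\mathrm{H}^{2}(L',-)=0$; it therefore exhibits $\mathrm{H}^{0}(L', R^{1}p_{*}\underline{End}(v^{*}E))$ as a quotient of $\mathrm{H}^{1}(\widetilde{X}, \underline{End}(v^{*}E))=0$, so $R^{1}p_{*}\underline{End}(v^{*}E)$ has no global sections. Any nonzero torsion subsheaf of a coherent sheaf on the smooth curve $L'$ would inject into its global sections, so $R^{1}p_{*}\underline{End}(v^{*}E)$ is torsion-free, hence locally free. Finally, because $p$ has one-dimensional fibers, base change holds for the top direct image $R^{1}p_{*}$, so the fiber of $R^{1}p_{*}\underline{End}(v^{*}E)$ at $s$ is $\mathrm{H}^{1}(p^{-1}(s), \underline{End}((v^{*}E)|_{p^{-1}(s)}))=\mathrm{Ext}^{1}_{p^{-1}(s)}((v^{*}E)|_{p^{-1}(s)}, (v^{*}E)|_{p^{-1}(s)})$, and local freeness makes its dimension constant in $s$.

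With both hypotheses verified, Lemma \ref{locally} shows that each $s\in L'$ has a neighborhood $U$ with $(v^{*}E)|_{p^{-1}(U)}\cong (v^{*}F)|_{p^{-1}(U)}$, which is exactly the assertion that $F$ is a modification of $E$. I expect the constancy in hypothesis (2) to be the main obstacle: a direct fiberwise approach would force one to control $\mathrm{hom}_{C_{s}}(E|_{C_{s}}, E|_{C_{s}})$ for every $s$, including the singular members of the pencil, which is delicate. The argument above sidesteps this entirely by transporting the rigidity of $E$ on $X$ up to $\widetilde{X}$ and reading off the constancy from the local freeness of $R^{1}p_{*}\underline{End}(v^{*}E)$ on the curve $L'$.
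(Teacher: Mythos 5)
Your proof is correct, and it reduces the statement to Lemma \ref{locally} exactly as the paper does, but the way you verify condition (2) of that lemma is genuinely different. The paper stays on $X$: it applies $\mathrm{Hom}_{X}(E,-)$ to the restriction sequence $0 \to E(-4H) \to E \to E|_{C_{s}} \to 0$; rigidity kills the term $\mathrm{Ext}^{1}_{X}(E,E)$ at the tail of the resulting long exact sequence, so
$$\mathrm{hom}_{C_{s}}(E|_{C_{s}}, E|_{C_{s}})=\mathrm{hom}_{X}(E,E)+\mathrm{ext}^{1}_{X}(E,E(-4H))-\mathrm{hom}_{X}(E,E(-4H))$$
is visibly independent of $s$, and then Riemann--Roch on the fibers (constancy of $\chi(E|_{C_{s}},E|_{C_{s}})$ in the flat family) converts constancy of $\mathrm{hom}$ into constancy of $\mathrm{ext}^{1}$. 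You instead work relatively over $L'$: blow-up invariance gives $\mathrm{H}^{1}(\widetilde{X}, \underline{End}(v^{*}E))=\mathrm{Ext}^{1}_{X}(E,E)=0$, the Leray sequence plus $\mathrm{H}^{2}(L',-)=0$ gives $\mathrm{H}^{0}(L', R^{1}p_{*}\underline{End}(v^{*}E))=0$, absence of sections forces torsion-freeness and hence local freeness on the smooth curve $L'$, and top-degree base change identifies the fibers of $R^{1}p_{*}$ with the groups $\mathrm{Ext}^{1}_{p^{-1}(s)}((v^{*}E)|_{p^{-1}(s)},(v^{*}E)|_{p^{-1}(s)})$. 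Both arguments invoke rigidity at the same point and in equivalent forms. What the paper's route buys is brevity and an explicit formula for $\mathrm{hom}_{C_{s}}(E|_{C_{s}},E|_{C_{s}})$, valid uniformly for every member of the pencil; what your route buys is that you never need Riemann--Roch on the (possibly singular) fibers, and your intermediate conclusion --- local freeness of $R^{1}p_{*}\underline{End}(v^{*}E)$ --- is precisely the ingredient that the proof of Lemma \ref{locally} itself extracts from condition (2), so your argument dovetails naturally with that machinery. One remark: your closing concern that a ``direct fiberwise approach'' would be delicate for singular members is unfounded; the paper's long exact sequence argument works for all $C_{s}\in |4H|_{X}$ without distinction, because it only manipulates sheaves on $X$ and never uses the internal geometry of $C_{s}$.
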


\begin{proof}
 Since $E=v_{*}v^{*}(E), F=v_{*}v^{*}(F)$, the lemma will follow from Lemma \ref{locally}. By assumption, condition (1) in Lemma \ref{locally} is satisfied, it suffices to check condition (2). By Lemma \ref{simplerigid}, $E$ is a simple rigid vector bundle on $X$. For any $s\in L'$, applying $\mathrm{Hom}_{X}(E,-)$ to the short exact sequence
  $$0 \longrightarrow E(-4H) \longrightarrow E \longrightarrow E|_{C_{s}} \longrightarrow 0 ,$$
  we have the long exact sequence
  
  \begin{multline*}
   0 \longrightarrow \mathrm{Hom}_{X}(E,E(-4H)) \longrightarrow \mathrm{Hom}_{X}(E,E) \longrightarrow \mathrm{Hom}_{C_{s}}(E_{s}, E_{s}) \\
    \longrightarrow \mathrm{Ext}^{1}_{X}(E, E(-4H)) \longrightarrow \mathrm{Ext}^{1}_{X}(E,E)=0 .
    \end{multline*}    
  Hence 
  $$\mathrm{hom}_{C_{s}}(E_{s},E_{s})=\mathrm{hom}_{X}(E, E)+\mathrm{ext}^{1}_{X}(E,E(-4H))-\mathrm{hom}_{X}(E, E(-4H))$$
  is independent of $s\in L'$. By Riemann-Roch, $\mathrm{ext}^{1}(E_{s},E_{s})$ is also independent of $s\in L'$.
\end{proof}

We will need the following version of Theorem 1.2 in \cite{Pol11}, which we prove from a different perspective. The relation of these two versions will be explained in Corollary \ref{Polcor}.

\begin{proposition}\label{Pol}
Let $E,F$ be simple vector bundles on $X$ such that $E|_{C_{s}}\cong F|_{C_{s}}$ for all $s\in L'$. If $\mathrm{Ext}^{1}_{X}(E, E(-4H))=0$, then $E \cong F$.

\end{proposition}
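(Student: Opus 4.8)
The plan is to transport the fiberwise agreement of $E$ and $F$ to the blown-up total space $\widetilde{X}$, recognize $v^{*}F$ as a \emph{modification} of $v^{*}E$ in the sense of Definition \ref{modification}, and then show that this modification is not only trivial in the sense of Lemma \ref{fiberwisesimple} but carries no twisting line bundle at all, which is exactly what promotes ``locally isomorphic'' to ``isomorphic''.

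First I would use the hypothesis $\mathrm{Ext}^{1}_{X}(E,E(-4H))=0$ to show that $E|_{C_{s}}$ is simple for every $s\in L'$. Since $E$ is simple, any nonzero map $E\to E(-4H)$ composed with the inclusion $E(-4H)\hookrightarrow E$ would be a nonzero scalar, forcing that inclusion to be surjective, which is impossible because its cokernel $E|_{C_{s}}$ is nonzero; hence $\mathrm{Hom}_{X}(E,E(-4H))=0$. Feeding this together with the hypothesis into the long exact sequence obtained by applying $\mathrm{Hom}_{X}(E,-)$ to
$$0 \longrightarrow E(-4H) \longrightarrow E \longrightarrow E|_{C_{s}} \longrightarrow 0$$
yields $\mathrm{End}_{C_{s}}(E|_{C_{s}})\cong\mathrm{End}_{X}(E)=\mathbb{C}$ for all $s$. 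Thus $v^{*}E$ is fiberwise simple over $p:\widetilde{X}\to L'$, and since the arithmetic genus of the fibers is constant, Riemann--Roch makes $\mathrm{ext}^{1}_{C_{s}}(E|_{C_{s}},E|_{C_{s}})$ independent of $s$.

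With fiberwise simplicity in hand, conditions (1) and (2) of Lemma \ref{locally} hold for the pair $v^{*}E,v^{*}F$, so $v^{*}F$ is a modification of $v^{*}E$; by Lemma \ref{fiberwisesimple} every such modification is trivial, giving $v^{*}F\cong v^{*}E\otimes p^{*}\mathcal{L}_{0}$ for some line bundle $\mathcal{L}_{0}$ on $L'\cong\mathbb{P}^{1}$. It then remains only to prove $\mathcal{L}_{0}\cong\mathcal{O}_{L'}$, after which $v^{*}F\cong v^{*}E$ pushes forward under $v_{*}$ (using $v_{*}v^{*}=\mathrm{id}$) to the desired $E\cong F$.

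The hard part, and the step that genuinely upgrades a modification to an isomorphism, is pinning down $\mathcal{L}_{0}$, and here I would take determinants. The class $\det(v^{*}F)=v^{*}\det F$ lies in $v^{*}\mathrm{Pic}(X)$ and so has zero coefficient on each exceptional divisor $\Sigma_{i}$ (with $\Sigma=\sum_{i}\Sigma_{i}$), whereas $\det(v^{*}E\otimes p^{*}\mathcal{L}_{0})=v^{*}\det E\otimes p^{*}\mathcal{L}_{0}^{\otimes r}$ with $r=\mathrm{rk}\,E$. Writing the fiber class as $F=v^{*}(4H)-\Sigma$ (which is precisely the relation already used in the proof of Lemma \ref{fiberwisetrivial}), the factor $p^{*}\mathcal{L}_{0}^{\otimes r}=\mathcal{O}_{\widetilde{X}}(r\deg\mathcal{L}_{0}\cdot F)$ contributes $-r\deg\mathcal{L}_{0}$ to each $\Sigma_{i}$. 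Comparing exceptional coefficients in $\mathrm{Pic}(\widetilde{X})=v^{*}\mathrm{Pic}(X)\oplus\bigoplus_{i}\mathbb{Z}\Sigma_{i}$ forces $r\deg\mathcal{L}_{0}=0$, hence $\deg\mathcal{L}_{0}=0$ and $\mathcal{L}_{0}\cong\mathcal{O}_{L'}$. The only subtlety I anticipate is ensuring that the fiberwise-simplicity computation is valid for \emph{every} member of the pencil, including the finitely many singular or reducible $C_{s}$, so that Lemma \ref{locally} and Lemma \ref{fiberwisesimple} apply over all of $L'$; but the defining sequence above and the adjunction identifying $\mathrm{Hom}_{X}(E,E|_{C_{s}})$ with $\mathrm{End}_{C_{s}}(E|_{C_{s}})$ hold for any divisor in $|4H|$, and the relevant global $\mathrm{Hom}$ and $\mathrm{Ext}$ groups on $X$ do not depend on $s$, so the argument runs uniformly.
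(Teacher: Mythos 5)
Your proof is correct and takes essentially the same route as the paper's: fiberwise simplicity of $E|_{C_{s}}$ from the hypothesis $\mathrm{Ext}^{1}_{X}(E,E(-4H))=0$ and simplicity of $E$, then Lemma \ref{locally} and Lemma \ref{fiberwisesimple} to reduce to $v^{*}F\cong v^{*}E\otimes p^{*}\mathcal{O}_{L'}(n)$, then detection of the twist on the exceptional divisor $\Sigma$, and finally pushforward along $v$. The only cosmetic difference is the detection step: you compare determinants (equivalently, degrees on the components of $\Sigma$, which are sections of $p$), whereas the paper compares the restrictions $v^{*}E|_{\Sigma}$ and $v^{*}F|_{\Sigma}$ of the full bundles, both trivial --- the same mechanism.
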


\begin{proof}
Using Notation \ref{endomorphismsheaf}, the set of modifications of $v^{*}(E)$ is $ \mathrm{H}^{1}(L, \mathcal{G}_{v^{*}(E)})$. By Lemma \ref{locally}, $v^{*}(F)\in  \mathrm{H}^{1}(L, \mathcal{G}_{v^{*}(E)})$. Applying $\mathrm{Hom}_{X}(E,-)$ to the following short exact sequence
$$0 \longrightarrow E(-4H) \longrightarrow E \longrightarrow E|_{C_{s}} \longrightarrow 0, \forall s\in L',$$
we get 
$$\mathbb{C}=\mathrm{Hom}_{X}(E, E) \longrightarrow \mathrm{Hom}_{X}(E, E_{C_{s}})\cong \mathrm{Hom}_{C_{s}}(E|_{C_{s}}, E|_{C_{s}}) \longrightarrow \mathrm{Ext}^{1}_{X}(E, E(-4H))=0.$$
Hence $\mathrm{Hom}_{C_{s}}(E|_{C_{s}}, E|_{C_{s}})=\mathbb{C}$ for all $s\in L'$, $\mathcal{A}_{v^{*}(E)}=\mathcal{O}_{L'}$, and $\mathcal{G}_{v^{*}(E)}\cong \mathcal{O}_{L'}^{*}$. Therefore
  $$\mathrm{H}^{1}(L', \mathcal{O}_{L'}^{*})=\mathrm{Pic}(L')=\mathbb{Z}$$
  is in one-to-one correspondence with line bundles on $L'$. Let $\Sigma$ be the exceptional divisor of $v$. Note that the restrictions to $\Sigma$ of different twists of $v^{*}(E|_{X})$ by line bundles on $L'$ are non-isomorphic. Since $v^{*}(E)|_{\Sigma}$ and $v^{*}(F)|_{\Sigma}$ are both trivial, we have $v^{*}(E|_{X})\cong v^{*}(F|_{X})$.
\end{proof}

\subsection{The vector bundle $E_{X}$}
In this subsection we define the bundle $E_{X}$ on $X$ and discuss its properties. Then we will prove the main result of this section (Proposition \ref{EXdunique}).

In the following we assume $d\geq 0$, the case $d< 0$ can be dealt by taking the dual. Let $v_{d}$ be the Mukai vector $(2d^{2}+1, -dH, 1) \in H^{*}_{alg}(X)$.
We define $E_{X, d}$ on $X$ as the sheaf that fits into the following exact sequence
\begin{equation}\label{EXd}
  0 \longrightarrow E_{X, d} \longrightarrow \mathcal{O}_{X}\otimes \mathrm{H}^{0}(\mathcal{O}_{X}(dH)) \overset{ev}{\longrightarrow} \mathcal{O}_{X}(dH) \longrightarrow 0,
\end{equation}
and let $\mathcal{E}_{\widetilde{X}, d}=v^{*}E_{X, d}$ on $\widetilde{X}$.  Let $C\in |4H|_{X}$ be any curve. We also define $F_{C, d}$ as the sheaf on $C$ that fits into the following exact sequence
\begin{equation}\label{FCd}
  0 \longrightarrow F_{C, d} \longrightarrow \mathcal{O}_{C}\otimes  \mathrm{H}^{0}(C, \mathcal{O}_{C}(dH)) \longrightarrow \mathcal{O}_{C}(dH) \longrightarrow 0,
\end{equation}
and let $\mathcal{F}_{\widetilde{X}, d}$ on $\widetilde{X}$ be the sheaf swept out by $F_{C, d}$. In other words, $\mathcal{F}_{\widetilde{X}, d}$ fits into the following exact sequence
\begin{equation}\label{FXd}
  0 \longrightarrow \mathcal{F}_{\widetilde{X}, d} \longrightarrow p^{*}(p_{*}\mathcal{O}_{\widetilde{X}}(dH)) \overset{ev}{\longrightarrow} \mathcal{O}_{\widetilde{X}}(dH) \longrightarrow 0.
\end{equation}
The first observation is that $E_{X, d}$ is simple and rigid. This is a consequence of a more general proposition.

\begin{proposition}\label{derivedmutation}
  Let $Y$ be a $K3$ surface and $A, B\in \mathcal{D}^{b}(Y)$ be simple rigid objects. Then the mapping cone of $ev: \mathrm{RHom}(A, B)\otimes A \longrightarrow B$ is simple and rigid.
\end{proposition}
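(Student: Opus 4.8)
The plan is to set $C:=\mathrm{Cone}(ev)$ and reduce everything to the single computation $\mathrm{RHom}(C,C)\cong\mathbb{C}\oplus\mathbb{C}[-2]$, which yields $\mathrm{hom}(C,C)=1$ and $\mathrm{ext}^{1}(C,C)=0$ simultaneously (the value of $\mathrm{ext}^{2}$ then being forced by Serre duality). The first observation is that on a $K3$ surface a simple rigid object is \emph{spherical}: Serre duality gives $\mathrm{ext}^{2}(A,A)=\mathrm{hom}(A,A)=1$ and $\mathrm{ext}^{1}(A,A)=0$, exactly as in the proof of Lemma \ref{sphericalunique}, so $\mathrm{RHom}(A,A)\cong\mathbb{C}\oplus\mathbb{C}[-2]$, and likewise for $B$. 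The key point is then to recognize that $C$ is precisely the image $T_{A}(B)$ of $B$ under the spherical twist associated to $A$, since $T_{A}$ is defined by the very triangle $\mathrm{RHom}(A,B)\otimes A\overset{ev}{\longrightarrow}B\longrightarrow T_{A}(B)\longrightarrow$.

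The quickest way to conclude is to invoke the theory of spherical twists (see \cite{Huy16}): for a spherical object $A$, the functor $T_{A}$ is an exact autoequivalence of $\mathcal{D}^{b}(Y)$. Any autoequivalence preserves $\mathrm{RHom}$, whence $\mathrm{RHom}(C,C)=\mathrm{RHom}(T_{A}B,T_{A}B)\cong\mathrm{RHom}(B,B)\cong\mathbb{C}\oplus\mathbb{C}[-2]$, and $C$ is simple and rigid.

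If one prefers a self-contained argument, I would instead run a dévissage on the defining triangle. Write $V=\mathrm{RHom}(A,B)$. Applying $\mathrm{RHom}(A,-)$ produces $V\otimes\mathrm{RHom}(A,A)\longrightarrow V\longrightarrow\mathrm{RHom}(A,C)$; because $ev$ is the universal evaluation, the component $V\otimes\mathbb{C}\cdot\mathrm{id}_{A}\to V$ is the identity. An upper-triangular change of coordinates on $V\otimes\mathrm{RHom}(A,A)=V\oplus V[-2]$ absorbs the remaining degree-two component, so the identity cancels and leaves $\mathrm{RHom}(A,C)\cong V[-1]$; dualizing by Serre duality gives $\mathrm{RHom}(C,A)\cong V^{\vee}[-1]$. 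One then feeds these into $\mathrm{RHom}(C,-)$ applied to the triangle, after computing $\mathrm{RHom}(B,C)$ from $\mathrm{RHom}(B,-)$.

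The hard part is the remaining ingredient, namely controlling the composition pairing
$$\mathrm{RHom}(A,B)\otimes\mathrm{RHom}(B,A)\longrightarrow\mathrm{RHom}(B,B)$$
that arises as the connecting map in $\mathrm{RHom}(B,C)$. Using $\mathrm{RHom}(B,A)\cong V^{\vee}[-2]$, Serre duality identifies its graded pieces $\mathrm{Ext}^{i}(A,B)\otimes\mathrm{Ext}^{2-i}(B,A)\to\mathrm{Ext}^{2}(B,B)\cong\mathbb{C}$ with the perfect Yoneda pairings, so they are surjective as soon as $V\neq 0$, while the degree-zero part $\mathrm{Hom}(A,B)\otimes\mathrm{Hom}(B,A)\to\mathrm{Hom}(B,B)=\mathbb{C}\cdot\mathrm{id}_{B}$ vanishes unless $A\cong B$ (a nonzero composite $w\circ g=c\,\mathrm{id}_{B}$ would split $B$ off $A$, forcing $A\cong B$ by simplicity and indecomposability). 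A Mukai-vector computation gives $\chi(C,C)=2$, and tracking these maps—not merely dimensions—through the long exact sequences collapses this to exactly $\mathbb{C}\oplus\mathbb{C}[-2]$. The degenerate cases $V=0$ (where $C\cong B$) and $A\cong B$ (where $ev$ restricts to the identity and $C\cong A[-1]$) are immediate and handled separately.
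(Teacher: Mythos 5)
Your main argument is correct, and it takes a genuinely different route from the paper's. The paper proves the proposition by a self-contained diagram chase: it applies $\mathrm{RHom}(-,-)$ to the defining triangle in both arguments, producing a $3\times 3$ diagram whose rows and columns are exact triangles, and then invokes Serre duality $\mathrm{RHom}(M,N)^{*}\cong \mathrm{RHom}(N,M)[2]$ to deduce a symmetry of this diagram forcing $\mathrm{RHom}(C,C)\cong \mathrm{RHom}(B,B)\cong \mathbb{C}\oplus\mathbb{C}[-2]$. You instead recognize $C$ as the spherical twist $T_{A}(B)$ and quote the Seidel--Thomas theorem that $T_{A}$ is an autoequivalence of $\mathcal{D}^{b}(Y)$ for $A$ spherical; full faithfulness then gives $\mathrm{RHom}(C,C)\cong\mathrm{RHom}(B,B)$ in one line, and your separate treatment of the degenerate cases $V=0$ and $A\cong B$ becomes unnecessary. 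Your route is shorter and rests on standard, carefully documented theory; it also sidesteps the one step of the paper's argument that is genuinely terse, namely that Serre duality by itself only exchanges the $(i,j)$ and $(j,i)$ entries of the diagram (a transpose symmetry), so identifying the corner $\mathrm{RHom}(C,C)$ with the opposite corner $\mathrm{RHom}(B,B)$ needs an extra argument of exactly the kind the twist functor packages away. The paper's route, in exchange, stays elementary and self-contained. Note that both proofs quietly strengthen the hypothesis in the same way: for arbitrary objects of $\mathcal{D}^{b}(Y)$, simplicity and rigidity only control $\mathrm{Ext}^{0}$, $\mathrm{Ext}^{1}$, $\mathrm{Ext}^{2}$, whereas sphericality (needed for Seidel--Thomas, and asserted as $\mathrm{RHom}(A,A)\cong S$ ``by assumption'' in the paper) also requires vanishing outside degrees $0$ to $2$; this is automatic for the sheaves to which the proposition is later applied ($A=\mathcal{O}_{X}$, $B=\mathcal{O}_{X}(dH)$ in Corollary \ref{Edrigid}), so neither proof is harmed, but the caveat applies equally to yours. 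Finally, your fallback d\'evissage is in spirit a linearized version of the paper's computation; as you concede, its composition-pairing step is only a sketch, so the spherical-twist argument should be regarded as your actual proof.
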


\begin{proof}
  Let $C$ be the mapping cone. Let $S=\mathbb{C}\oplus \mathbb{C}[-2]\in \mathcal{D}^{b}(Y)$ and $p_{i}: S \rightarrow \mathcal{H}^{i}(S)[-i]$ be the projection map. We need to show $\mathrm{RHom}(C, C)\cong S$. Applying $\mathrm{RHom}(-, -)$ to the following exact triangle in both arguments:
  $$C \longrightarrow \mathrm{RHom}(A, B)\otimes A \overset{ev}{\longrightarrow} B. $$
  Note that $\mathrm{RHom}(A, A)\cong \mathrm{RHom}(B, B) \cong S$ by assumption,
  we get a commutative diagram whose rows and columns are all exact triangles:
  \\
  
  \begin{adjustbox}{scale=0.85, center}
\begin{tikzcd}
  \mathrm{RHom}(C, C) \arrow[r] & \mathrm{RHom}(C, A)\otimes \mathrm{RHom}(A, B)\arrow[r] & \mathrm{RHom}(C, B) \\
  \mathrm{RHom}(A, B)^{*}\otimes \mathrm{RHom}(A, C) \arrow[r]\arrow[u] & \mathrm{RHom}(A, B)^{*}\otimes S \otimes \mathrm{RHom}(A, B) \arrow[r,  "p_{0}"]\arrow[u] & \mathrm{RHom}(A, B)^{*}\otimes \mathrm{RHom}(A, B) \arrow[u] \\
  \mathrm{RHom}(B, C) \arrow[r]\arrow[u] & \mathrm{RHom}(B, A)\otimes \mathrm{RHom}(A, B) \arrow[r]\arrow[u, "p_{0}^{*}"] & \mathrm{RHom}(B, B)\cong S \arrow[u].  
\end{tikzcd}
\end{adjustbox}
\\
\\
For any $M, N\in \mathcal{D}^{b}(Y)$, by Serre Duality we have $\mathrm{RHom}(M, N)^{*}\cong \mathrm{RHom}(N, M)[2]$. Hence applying dual will flip the diagram about the anti-diagonal and shift its degree by 2. Therefore $\mathrm{RHom}(C, C)\cong \mathrm{RHom}(B, B)\cong S$.

\end{proof}

\begin{corollary}\label{Edrigid}
The sheaf $E_{X, d}$ is simple and rigid for $d> 0$. In particular, it is locally free.
\end{corollary}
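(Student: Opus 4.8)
The plan is to recognize $E_{X,d}$ as (a shift of) the mapping cone appearing in Proposition \ref{derivedmutation}, applied to the pair $A=\mathcal{O}_X$ and $B=\mathcal{O}_X(dH)$. Both are line bundles on the $K3$ surface $X$, hence simple, and they are rigid since $\mathrm{Ext}^1_X(\mathcal{O}_X(dH),\mathcal{O}_X(dH))=H^1(X,\mathcal{O}_X)=0$. Thus $A$ and $B$ are simple rigid objects and Proposition \ref{derivedmutation} will apply, provided the derived evaluation map can be matched with the defining sequence \eqref{EXd}.

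First I would check that the derived mutation collapses to the honest evaluation sequence. We have $\mathrm{RHom}(A,B)=R\Gamma(X,\mathcal{O}_X(dH))$. For $d>0$ the divisor $dH$ is ample, and since $\omega_X\cong\mathcal{O}_X$, Kodaira vanishing gives $H^i(X,\mathcal{O}_X(dH))=0$ for $i>0$ (alternatively, $H^2$ vanishes by Serre duality because $H^0(\mathcal{O}_X(-dH))=0$). Hence $\mathrm{RHom}(A,B)$ is concentrated in degree $0$, equal to $V:=H^0(\mathcal{O}_X(dH))$, and the map $ev\colon \mathrm{RHom}(A,B)\otimes A\to B$ is exactly the evaluation $V\otimes\mathcal{O}_X\to\mathcal{O}_X(dH)$ of \eqref{EXd}. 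Rotating the distinguished triangle attached to \eqref{EXd} then identifies the mapping cone of $ev$ with $E_{X,d}[1]$.

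Next, Proposition \ref{derivedmutation} yields $\mathrm{RHom}(E_{X,d}[1],E_{X,d}[1])\cong\mathbb{C}\oplus\mathbb{C}[-2]$, and since the shifts cancel in both arguments we get $\mathrm{RHom}(E_{X,d},E_{X,d})\cong\mathbb{C}\oplus\mathbb{C}[-2]$ as well. Reading off cohomology in degrees $0$ and $1$ gives $\mathrm{Hom}_X(E_{X,d},E_{X,d})=\mathbb{C}$ and $\mathrm{Ext}^1_X(E_{X,d},E_{X,d})=0$, which is precisely simplicity and rigidity. For local freeness I would argue directly from \eqref{EXd}: for $d>0$ the line bundle $dH$ is globally generated, so $ev$ is surjective as a map of sheaves and $E_{X,d}=\ker(ev)$ is a genuine coherent sheaf; as the kernel of a morphism of locally free sheaves it is reflexive, and reflexive sheaves on the smooth surface $X$ are locally free.

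I expect no serious obstacle, as the essential homological content already resides in Proposition \ref{derivedmutation}; the only point demanding care is the degree-$0$ concentration of $\mathrm{RHom}(\mathcal{O}_X,\mathcal{O}_X(dH))$, which is what allows the derived statement to descend to the desired statement about the honest sheaf $E_{X,d}$. It is also worth noting that this is exactly where $d>0$ is used, so the argument for $d<0$ genuinely requires the passage to duals mentioned before Corollary \ref{Edrigid}.
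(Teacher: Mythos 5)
Your proof is correct and follows essentially the same route as the paper: the paper likewise invokes Kodaira vanishing to identify $\mathrm{RHom}(\mathcal{O}_{X},\mathcal{O}_{X}(dH))$ with $\mathrm{H}^{0}(\mathcal{O}_{X}(dH))$ in degree zero and then applies Proposition \ref{derivedmutation} to the evaluation triangle defining $E_{X,d}$. Your additional details (the shift bookkeeping, the $\mathrm{H}^{2}$ vanishing via Serre duality, and the local-freeness argument via the kernel of a surjection of locally free sheaves) are accurate elaborations of what the paper leaves implicit.
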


\begin{proof}
  By the Kodaira Vanishing Theorem, we have $ \mathrm{H}^{1}(\mathcal{O}_{X}(dH))=0$. Hence we have $\mathrm{RHom}(\mathcal{O}_{X}, \mathcal{O}_{X}(dH))= \mathrm{H}^{0}(\mathcal{O}_{X}(dH))$ for $d> 0$. The claim follows from Proposition \ref{derivedmutation}. 
\end{proof}

Consider the following exact sequence
$$0 \longrightarrow \mathcal{O}_{X}((d-4)H) \longrightarrow \mathcal{O}_{X}(dH) \longrightarrow \mathcal{O}_{C}(dH) \longrightarrow 0. $$
Taking cohomology, we have
$$0 \longrightarrow  \mathrm{H}^{0}(\mathcal{O}_{X}((d-4)H)) \longrightarrow  \mathrm{H}^{0}(\mathcal{O}_{X}(dH)) \longrightarrow  \mathrm{H}^{0}(C, \mathcal{O}_{C}(dH)) \longrightarrow  \mathrm{H}^{1}(\mathcal{O}_{X}((d-4)H))=0. $$
Hence $\mathrm{h}^{0}(C, \mathcal{O}_{C}(dH))$ is independent of the choice of $C\in |4H|_{X}$. Furthermore, when $d\geq 4$, we see that $ \mathrm{H}^{0}(C, \mathcal{O}_{C}(dH))$ is a strict quotient of $ \mathrm{H}^{0}(\mathcal{O}_{X}(dH))$. Hence $F_{C, d}$ is a strict quotient of $E_{X, d}|_{C}=\mathcal{E}_{d}|_{C}$. We have the following commutative diagram
\[\begin{tikzcd}
    & \mathrm{ker}\arrow[d] & K_{\widetilde{X}, d}\arrow[d] & 0 \arrow[d]& \\
    0\arrow[r] & \mathcal{E}_{\widetilde{X}, d} \arrow[r]\arrow[d] & \mathcal{O}_{\widetilde{X}}\otimes  \mathrm{H}^{0}(X, \mathcal{O}_{X}(dH)) \arrow[r]\arrow[d] & \mathcal{O}_{\widetilde{X}}(dH) \arrow[r]\arrow[d, "="] & 0 \\
    0\arrow[r] & \mathcal{F}_{\widetilde{X}, d} \arrow[r]\arrow[d] & p^{*}(p_{*}\mathcal{O}_{\widetilde{X}}(dH)) \arrow[r]\arrow[d] & \mathcal{O}_{\widetilde{X}}(dH) \arrow[r]\arrow[d] & 0 \\
    & 0 & 0 & 0 &
  \end{tikzcd}\]
On each $C\in |4H|_{X}$, $K_{\widetilde{X}, d}|_{C}\cong \mathcal{O}_{C}\otimes  \mathrm{H}^{0}(\mathcal{O}_{X}(-C))$. However, we will see that $K_{\widetilde{X}, d}$ is not a constant family. Consider the multiplication map
$$ \mathrm{H}^{0}(\mathcal{O}_{X}((d-4)H))\otimes  \mathrm{H}^{0}(\mathcal{O}_{X}(4H)) \longrightarrow  \mathrm{H}^{0}(\mathcal{O}_{X}(dH)). $$
This induces a morphism
$$|4H|_{X} \longrightarrow G(\mathrm{h}^{0}((d-4)H), \mathrm{H}^{0}(\mathcal{O}_{X}(dH))).$$
Precomposing this map with the inclusion $L'\subset |4H|_{X}$, we have a morphism
$$m: L' \longrightarrow  G(\mathrm{h}^{0}((d-4)H),  \mathrm{H}^{0}(\mathcal{O}_{X}(dH))). $$
Let $S$ be the tautological subbundle of $ G(\mathrm{h}^{0}((d-4)H),  \mathrm{H}^{0}(\mathcal{O}_{X}(dH)))$, then by construction $K_{\widetilde{X}, d}= m^{*}S$. Explicitly, $K_{\widetilde{X}, d}\cong  \mathrm{H}^{0}(\mathcal{O}_{X}((d-4)H))\otimes \mathcal{O}_{\widetilde{X}}(-F)$, where $F$ is the fiber class of $p$. Hence we have the following exact sequence
\begin{equation}\label{relHN}
  0 \longrightarrow  \mathrm{H}^{0}(\mathcal{O}_{\widetilde{X}}((d-4)H))\otimes \mathcal{O}_{\widetilde{X}}(-F) \longrightarrow \mathcal{E}_{\widetilde{X}, d} \longrightarrow \mathcal{F}_{\widetilde{X}, d} \longrightarrow 0.
\end{equation}
We will also need the following observations.

\begin{lemma}\label{split}
  The sequence (\ref{relHN}) splits on each fiber. More generally, for every $C\in |4H|_{X}$ we have
  $$E_{X, d}|_{C}\cong ( \mathrm{H}^{0}(\mathcal{O}_{X}((d-4)H))\otimes \mathcal{O}_{C})\oplus F_{C}.$$
\end{lemma}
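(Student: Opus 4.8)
The plan is to prove the second, more general statement directly, by restricting the defining sequence (\ref{EXd}) to $C$ and exhibiting an explicit splitting that comes from a vector-space decomposition of $\mathrm{H}^{0}(\mathcal{O}_{X}(dH))$; the fiberwise splitting of (\ref{relHN}) will then drop out as a special case. Throughout I write $V=\mathrm{H}^{0}(\mathcal{O}_{X}(dH))$, $U=\mathrm{H}^{0}(\mathcal{O}_{X}((d-4)H))$ and $W=\mathrm{H}^{0}(C,\mathcal{O}_{C}(dH))$, so that the restriction sequence computed just before the lemma reads $0\to U\to V\to W\to 0$, with $U\to V$ given by multiplication by the equation $f$ of $C$.

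First I would restrict (\ref{EXd}) to $C$. Since $E_{X,d}$ is locally free by Corollary \ref{Edrigid} (equivalently, since $\mathcal{O}_{X}(dH)$ is locally free, so that $\mathrm{Tor}_{1}(\mathcal{O}_{X}(dH),\mathcal{O}_{C})=0$), the restriction stays short exact:
$$0\longrightarrow E_{X,d}|_{C}\longrightarrow \mathcal{O}_{C}\otimes V\xrightarrow{\ ev\ }\mathcal{O}_{C}(dH)\longrightarrow 0.$$
The key observation is that $ev$ factors as $ev_{C}\circ(\mathrm{id}_{\mathcal{O}_{C}}\otimes q)$, where $q\colon V\twoheadrightarrow W$ is restriction to $C$ and $ev_{C}$ is the evaluation map defining $F_{C,d}$ in (\ref{FCd}); in particular $U=\ker q$ consists of sections vanishing on $C$, so $ev$ kills $\mathcal{O}_{C}\otimes U$, which therefore includes into $E_{X,d}|_{C}=\ker(ev)$.

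Next I would choose a vector-space splitting $\sigma\colon W\to V$ of $q$, giving $V=U\oplus\sigma(W)$ and hence $\mathcal{O}_{C}\otimes V=(\mathcal{O}_{C}\otimes U)\oplus(\mathcal{O}_{C}\otimes\sigma(W))$. Because $ev$ vanishes on the first summand and, on the second, is $ev_{C}$ transported across the isomorphism $q|_{\sigma(W)}\colon\sigma(W)\xrightarrow{\sim}W$, a short diagram chase identifies $\ker(ev)=E_{X,d}|_{C}$ with $(\mathcal{O}_{C}\otimes U)\oplus\ker(ev_{C})=(\mathcal{O}_{C}\otimes U)\oplus F_{C,d}$, which is the general statement. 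The fiberwise splitting of (\ref{relHN}) then follows: the fibers of $p$ are the strict transforms $\widetilde{C}_{s}\cong C_{s}$, on which $\mathcal{O}_{\widetilde{X}}(-F)$ is trivial (distinct fibers of $p$ are disjoint and linearly equivalent, so $\mathcal{O}(F)$ restricts trivially), while $\mathcal{E}_{\widetilde{X},d}$ restricts to $E_{X,d}|_{C_{s}}$ and $\mathcal{F}_{\widetilde{X},d}$ to $F_{C_{s},d}$. Since $\mathcal{F}_{\widetilde{X},d}$ is flat over $L'$ (the fibers $F_{C,d}$ have constant Hilbert polynomial, as $\mathrm{h}^{0}(C,\mathcal{O}_{C}(dH))$ is independent of $C$), the restriction of (\ref{relHN}) to a fiber stays exact and is precisely the sequence just shown to split.

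The computations are routine; the points demanding care are (i) keeping the restriction of (\ref{EXd}) exact, which uses local freeness of $E_{X,d}$, and (ii) correctly identifying the restricted evaluation map with $ev_{C}$ so that its kernel on the $\sigma(W)$-summand is exactly $F_{C,d}$. The mild subtlety I expect to be most delicate is purely bookkeeping: checking that the algebraic decomposition $V=U\oplus\sigma(W)$ genuinely induces a splitting of \emph{sheaves} compatible with both evaluation maps, rather than merely a splitting on global sections. Notably, no vanishing of an $\mathrm{Ext}$ group is available — indeed $\mathrm{Ext}^{1}_{C}(F_{C,d},\mathcal{O}_{C})$ need not vanish — so the splitting cannot be obtained abstractly and must be produced by hand exactly as above.
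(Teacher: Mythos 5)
Your proof is correct and takes essentially the same approach as the paper: the paper's (much terser) argument likewise observes that the evaluation map restricted to $C$ vanishes on the trivial subbundle $\mathrm{H}^{0}(\mathcal{O}_{X}((d-4)H))\otimes \mathcal{O}_{C}\cong \mathrm{H}^{0}(\mathcal{O}_{X}(dH-C))\otimes \mathcal{O}_{C}$ of sections vanishing on $C$, and concludes the splitting from this. Your write-up merely makes explicit the details the paper leaves implicit — the choice of a vector-space complement of $\mathrm{H}^{0}(\mathcal{O}_{X}((d-4)H))$ in $\mathrm{H}^{0}(\mathcal{O}_{X}(dH))$, the identification of the induced map on that complement with $ev_{C}$, and the deduction of the fiberwise splitting of (\ref{relHN}) via flatness of $\mathcal{F}_{\widetilde{X},d}$ and triviality of $\mathcal{O}_{\widetilde{X}}(-F)$ on fibers.
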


\begin{proof}
  On each $C\in |4H|_{X}$, the evaluation map $ev: \mathrm{H}^{0}(\mathcal{O}_{X}(dH)) \longrightarrow \mathcal{O}_{X}(dH)$ vanishes on the subbundle
  $$ \mathrm{H}^{0}(\mathcal{O}_{X}((d-4)H))\otimes \mathcal{O}_{X} \cong  \mathrm{H}^{0}(\mathcal{O}_{X}(dH-C))\otimes \mathcal{O}_{X}\subset  \mathrm{H}^{0}(\mathcal{O}_{X}(dH))\otimes \mathcal{O}_{X}.$$
  Hence the sequence (\ref{relHN}) splits on $C$.
\end{proof}

\begin{lemma}\label{FXdstable}
  The sequence (\ref{relHN}) is the relative Harder-Narasimhan filtration of $\mathcal{E}_{\widetilde{X}, d}$, in the sense that the restriction of (\ref{relHN}) to every smooth fiber is the Harder-Narasimhan filtration of the restriction of $\mathcal{E}_{\widetilde{X}, d}$.
\end{lemma}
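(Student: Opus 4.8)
The plan is to verify the defining property fiber by fiber: for each smooth fiber $C=C_{s}$ of $p$, I want to show that restricting (\ref{relHN}) to $C$ produces the Harder--Narasimhan filtration of $E_{X,d}|_{C}$. First I note that all three terms of (\ref{relHN}) are vector bundles ($\mathcal{E}_{\widetilde{X},d}=v^{*}E_{X,d}$ is locally free by Corollary \ref{Edrigid}, and $\mathcal{F}_{\widetilde{X},d}$ is the kernel of a surjection of vector bundles), so restriction to $C$ stays short exact. Since $C$ is a fiber of $p$ we have $C\cdot F=0$, hence $\mathcal{O}_{\widetilde{X}}(-F)|_{C}\cong\mathcal{O}_{C}$ and the subsheaf in (\ref{relHN}) restricts to the trivial bundle $\mathcal{O}_{C}^{\oplus N}$ with $N=\mathrm{h}^{0}(\mathcal{O}_{X}((d-4)H))$, while the quotient restricts to $F_{C,d}$. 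By Lemma \ref{split} this restriction in fact splits. Because the Harder--Narasimhan filtration of a direct sum of semistable bundles of distinct slopes is the filtration by partial sums, the lemma reduces to showing that $\mathcal{O}_{C}^{\oplus N}$ and $F_{C,d}$ are semistable and that $\mu(\mathcal{O}_{C}^{\oplus N})>\mu(F_{C,d})$.

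The first bundle is trivial, hence semistable of slope $0$. For the slopes I record that $C$ is a smooth complete intersection of two quartics, so it has degree $16$ in $\mathbb{P}^{3}$; writing $C\in|4H|_{X}$ on the $K3$ surface $X$, adjunction gives $K_{C}\cong(K_{X}+C)|_{C}\cong\mathcal{O}_{C}(4H)$, whence $g(C)=33$ and $\deg_{C}\mathcal{O}_{C}(dH)=16d$. From the defining sequence (\ref{FCd}), $F_{C,d}$ has rank $\mathrm{h}^{0}(C,\mathcal{O}_{C}(dH))-1$ and degree $-16d$, so $\mu(F_{C,d})<0=\mu(\mathcal{O}_{C}^{\oplus N})$ for every $d\geq 4$. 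The slope inequality is therefore automatic, and the entire statement reduces to the semistability of $F_{C,d}$.

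The bundle $F_{C,d}$ is precisely the syzygy (kernel) bundle of the globally generated line bundle $\mathcal{O}_{C}(dH)$, i.e.\ the kernel of its evaluation map. For $d\geq 5$ we have $\deg_{C}\mathcal{O}_{C}(dH)=16d\geq 80>67=2g+1$, so Butler's stability theorem for kernel bundles shows that $F_{C,d}$ is stable, in particular semistable. The remaining case $d=4$ is the delicate one: there $\mathcal{O}_{C}(4H)\cong K_{C}$ is the canonical bundle, its degree $64=2g-2$ lies below Butler's threshold, and $F_{C,4}$ is the canonical syzygy bundle $M_{K_{C}}$. Here I would invoke the semistability of the canonical kernel bundle, which holds because $C$ is non-hyperelliptic --- being a smooth complete intersection, its canonical system $|K_{C}|=|\mathcal{O}_{C}(4H)|$ embeds it --- so that $M_{K_{C}}$ is stable. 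I expect this boundary case $d=4$ to be the main obstacle, since it is exactly the value for which $\mathcal{O}_{C}(dH)$ becomes special and the generic kernel-bundle stability statements no longer apply directly; it is also the place where the non-hyperelliptic geometry of the complete intersection curves genuinely enters. Finally, I would note at the outset that the very general choice of pencil guarantees that the generic fiber $C$ is smooth, so that the fiberwise Harder--Narasimhan filtration is defined in the first place.
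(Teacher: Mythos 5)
Your proposal is correct and follows the same overall strategy as the paper's proof: restrict (\ref{relHN}) to a smooth fiber, use Lemma \ref{split} to identify the sub and quotient as $\mathcal{O}_{C}^{\oplus N}$ and $F_{C,d}$, note that the trivial bundle is semistable of slope $0$ while $F_{C,d}$ is semistable of negative slope, and conclude by uniqueness of the Harder--Narasimhan filtration. Where you genuinely diverge is in the stability input for $F_{C,d}$, and your version is the more careful one. The paper handles all $d\geq 4$ with a single citation of Butler's theorem; but Butler's hypothesis is a slope bound of at least $2g(C)=66$ (resp.\ $2g+1=67$ for stability), and since $\deg_{C}\mathcal{O}_{C}(dH)=16d$ this covers exactly $d\geq 5$, as you observe. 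For $d=4$ you correctly identify that $\mathcal{O}_{C}(4H)\cong K_{C}$ (degree $64=2g-2$, with $h^{0}=33=g$), so $F_{C,4}=M_{K_{C}}$ is the canonical syzygy bundle, which lies strictly below Butler's threshold; you then invoke the Paranjape--Ramanan-type theorem that $M_{K_{C}}$ is (semi)stable for non-hyperelliptic curves, with non-hyperellipticity automatic here because $K_{C}=\mathcal{O}_{C}(4H)$ is very ample. Since the uniqueness argument only needs semistability of the graded pieces, your hedging between stability and semistability at $d=4$ is harmless. In short: same decomposition and same HN-uniqueness endgame, but the paper buys brevity with one citation that does not literally cover the boundary case $d=4$, while your argument isolates that case and supplies the reference that actually applies to it.
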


\begin{proof}
 Note that for $d\geq 4$, $F_{C_{s}, d}$ is stable on every smooth fiber $C_{s}$ by \cite{But94}. Since $ \mathrm{H}^{0}(\mathcal{O}_{X}((d-4)H))\otimes \mathcal{O}_{C_{s}}$ is semistable and $\mu(F_{C_{s}, d})<\mu(\mathcal{O}_{C_{s}})$, the lemma follows from the uniqueness of the Harder-Narasimhan filtration. 
\end{proof}

Now we show the main result of this section.

\begin{proposition}\label{EXdunique}
  Suppose $E$ is an exceptional bundle on $\mathbb{P}^{3}$ with rank $2d^{2}+1$ and degree $-d$ for some $d>0$. Then for any smooth quartic $X$ in a pencil that satisfies Condition \ref{condition1}, we have $E|_{X}\cong E_{X, d}$ (defined in (\ref{EXd})). 
\end{proposition}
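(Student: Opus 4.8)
The plan is to prove the isomorphism \emph{fiberwise} over the pencil $L'$ of curves covering $X$, and then to show that the resulting local isomorphisms glue to a global one in essentially only one way. Throughout write $E$ for $E|_{X}$; by Lemma \ref{simplerigid} it is simple and rigid, while $E_{X,d}$ is simple and rigid by Corollary \ref{Edrigid}, and both carry the Mukai vector $v_{d}=(2d^{2}+1,-dH,1)$. Since $v\colon \widetilde{X}\to X$ is the blow-up of $64$ points on a smooth surface we have $v_{*}v^{*}=\mathrm{id}$ and $\mathrm{Ext}^{*}_{\widetilde{X}}(v^{*}A,v^{*}B)=\mathrm{Ext}^{*}_{X}(A,B)$, so it is equivalent to prove $v^{*}E\cong \mathcal{E}_{\widetilde{X},d}$ on $\widetilde{X}$.

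First I would establish that $E|_{C_{s}}\cong E_{X,d}|_{C_{s}}$ for every $s\in L'$. By Condition \ref{condition1} the pencil $L_{s,t}$ through the base curve $C_{s}$ contains a smooth quartic $X'$ of Picard rank $1$, and $C_{s}\subset X'$ since $C_{s}$ is the base locus of $L_{s,t}$. On $X'$ the restriction $E|_{X'}$ is spherical by Lemma \ref{stablespherical}. The kernel bundle $E_{X',d}$ defined by (\ref{EXd}) on $X'$ is simple and rigid by Corollary \ref{Edrigid}; a slope computation using (\ref{EXd}), which is where the Picard rank $1$ hypothesis enters, shows that $E_{X',d}$ is $\mu_{H}$-stable, hence also spherical, with the same Mukai vector $v_{d}$ as $E|_{X'}$. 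Lemma \ref{sphericalunique} then gives $E|_{X'}\cong E_{X',d}$. Writing $k=\mathrm{h}^{0}(\mathcal{O}_{X}((d-4)H))$ and restricting to $C_{s}$, Lemma \ref{split} identifies both $E_{X',d}|_{C_{s}}$ and $E_{X,d}|_{C_{s}}$ with $\mathcal{O}_{C_{s}}^{\oplus k}\oplus F_{C_{s},d}$, a bundle depending only on the intrinsic curve $C_{s}$; hence $E|_{C_{s}}=(E|_{X'})|_{C_{s}}\cong E_{X',d}|_{C_{s}}\cong E_{X,d}|_{C_{s}}$.

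With the fiberwise isomorphism in hand, Riemann--Roch makes $\dim \mathrm{Ext}^{1}_{C_{s}}(E|_{C_{s}},E|_{C_{s}})$ independent of $s$, so Lemma \ref{locally} shows that $v^{*}E$ is a modification of $\mathcal{E}_{\widetilde{X},d}$, and it is rigid because $E$ is. The next step is to analyze an arbitrary rigid modification $M'$ of $\mathcal{E}_{\widetilde{X},d}$ through the relative Harder--Narasimhan filtration. By Lemma \ref{FXdstable} the sequence (\ref{relHN}) is the relative HN filtration of $\mathcal{E}_{\widetilde{X},d}$, and since the HN filtration is canonical it is preserved by any modification; thus $M'$ sits in an extension $0\to S'\to M'\to \mathcal{F}'\to 0$ whose sub is a modification of $\mathrm{H}^{0}(\mathcal{O}_{\widetilde{X}}((d-4)H))\otimes \mathcal{O}_{\widetilde{X}}(-F)$ and whose quotient is a modification of $\mathcal{F}_{\widetilde{X},d}$. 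Because the sub has strictly larger fiberwise slope than the quotient, $\mathrm{Hom}(S',\mathcal{F}')=0$, so Mukai's Lemma \ref{Mukai} forces both $S'$ and $\mathcal{F}'$ to be rigid. Now Lemma \ref{fiberwisesimple}, applied to the fiberwise-stable (hence fiberwise-simple) $\mathcal{F}_{\widetilde{X},d}$, and Lemma \ref{fiberwisetrivial}, applied to the sub $p^{*}\mathcal{O}_{L'}(-1)^{\oplus k}$, show that $S'$ and $\mathcal{F}'$ are \emph{trivial} modifications, i.e. differ from the original sub and quotient only by pullbacks of line bundles on $L'$.

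It remains to pin down these twists and the extension class. Here I would use that $M'=v^{*}E$ is pulled back from $X$, hence trivial on every component $\Sigma_{i}\cong L'$ of the exceptional divisor, together with $\det M'=v^{*}\det E=\mathcal{O}_{\widetilde{X}}(-dH)$. Restricting $0\to S'\to M'\to \mathcal{F}'\to 0$ to $\Sigma_{i}\cong \mathbb{P}^{1}$ and comparing the sub-bundle types and degrees of a trivial bundle on $\mathbb{P}^{1}$ (using $\mathrm{rk}\,\mathcal{F}_{\widetilde{X},d}=2d^{2}+1-k$) cuts the admissible twists down to a finite list, and an integrality/determinant computation then forces them, and the extension, to be trivial, so $M'\cong \mathcal{E}_{\widetilde{X},d}$ and therefore $E|_{X}\cong E_{X,d}$. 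I expect this last descent step to be the main obstacle: because $E|_{C_{s}}$ is not simple we have $\mathrm{Ext}^{1}_{X}(E,E(-4H))\neq 0$, so Proposition \ref{Pol} does not apply directly and the modifications form a genuinely nonabelian cohomology set; the triviality must be extracted from the interaction between the canonical relative HN filtration and the constraint of descending along $v$, rather than from the simple $\mathrm{Pic}(L')$ computation of Lemma \ref{fiberwisesimple}.
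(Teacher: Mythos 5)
Your first three steps coincide with the paper's own proof: the fiberwise identification $E|_{C_{s}}\cong E_{X,d}|_{C_{s}}$ through a Picard-rank-one member of $L_{s,t}$ (the paper phrases this via $M_{X',H}(v_{d})$ and stability of $E_{X',d}$, you via Lemma \ref{sphericalunique}; same content), then Lemma \ref{locally} to realize $v^{*}(E|_{X})$ as a rigid modification of $\mathcal{E}_{\widetilde{X},d}$, then the relative Harder--Narasimhan filtration plus Mukai's Lemma, Lemma \ref{fiberwisesimple} and Lemma \ref{fiberwisetrivial} to reduce everything to two integers $a,b$ with $G_{1}\cong \mathcal{O}_{\widetilde{X}}((a-1)F)^{\oplus p}$, $G_{2}\cong \mathcal{F}_{\widetilde{X},d}(bF)$, where $p=\mathrm{h}^{0}(\mathcal{O}_{X}((d-4)H))$ is your $k$. (Two smaller points: verifying condition (2) of Lemma \ref{locally} needs constancy of $\mathrm{hom}_{C_{s}}$ before Riemann--Roch can be invoked, which the paper extracts from rigidity of $E|_{X}$; and your HN argument tacitly assumes $d\geq 4$, whereas the paper disposes of $0<d\leq 3$ separately using constructive bundles and Proposition \ref{Pol}.)

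The genuine gap is your last step. The determinant gives exactly one linear relation, $pa+(2d^{2}+1-p)b=0$, and restriction to an exceptional curve $\Sigma_{i}\cong\mathbb{P}^{1}$, where $v^{*}(E|_{X})$ is trivial, gives only that $\mathcal{O}_{\mathbb{P}^{1}}(a-1)^{\oplus p}$ embeds in a trivial bundle (so $a\leq 1$) and that $\mathcal{F}_{\widetilde{X},d}|_{\Sigma_{i}}(b)$ is a quotient of a trivial bundle, hence globally generated (so essentially $b\geq 0$). These constraints admit infinitely many integer solutions: for $d=5$ one has $p=4$ and $2d^{2}+1-p=47$, and every pair $(a,b)=(-47m,4m)$, $m\geq 0$, passes all of them, so no integrality or determinant computation can finish from here. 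What actually kills $c=b-a\neq 0$ in the paper is rigidity of the middle terms, used quantitatively: one shows $\mathrm{ext}^{1}(\mathcal{F}_{\widetilde{X},d},\mathcal{O}_{\widetilde{X}}(-F))=p$ by a $\mathrm{GL}_{p}$-orbit argument (rigidity plus simplicity of $\mathcal{E}_{\widetilde{X},d}$), and one shows $\mathrm{H}^{0}(R^{1}p_{*}\underline{Hom}(G_{2},G_{1}))=0$ by the deformation argument that a class nonsplit on some fiber would still have middle term $\cong\mathcal{E}_{\widetilde{X},d}$ by rigidity, contradicting the fiberwise splitting of Lemma \ref{split}; running this for both (\ref{relHN}) and (\ref{GG}) yields $\mathrm{h}^{1}(p_{*}\mathcal{G})=\mathrm{h}^{1}(p_{*}\mathcal{G}(c))=p\neq 0$ for $\mathcal{G}=\mathcal{F}_{\widetilde{X},d}^{*}\otimes\mathcal{O}_{\widetilde{X}}(-F)$, and on $\mathbb{P}^{1}$ equal nonzero $\mathrm{h}^{1}$ before and after twisting forces $c=0$. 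Only then does triviality on $\Sigma$ give $a=b=0$. Finally, even granted $a=b=0$, you still owe an argument that two rigid bundles occurring as middle terms of (possibly different) classes in $\mathrm{Ext}^{1}(\mathcal{F}_{\widetilde{X},d},\mathcal{O}_{\widetilde{X}}(-F)^{\oplus p})$ are isomorphic; this is again the orbit/openness argument of the paper (general classes have middle term $\mathcal{E}_{\widetilde{X},d}$, and rigidity makes the isomorphism type open), not a determinant statement --- note the extension class itself is necessarily non-split, so ``forcing the extension to be trivial'' is not the conclusion you want.
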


\begin{proof}
  
  The proof will be separated into several steps. First we will see that $v^{*}(E|_{X})$ is a rigid modification of $v^{*}(E_{X, d})$ (Definition \ref{modification}). Then we find an invariant for such modifications. Next, we show that the invariants of $v^{*}(E|_{X})$ and $v^{*}(E_{X, d})$ are equal. Finally, this implies that $v^{*}(E|_{X})\cong v^{*}(E_{X, d})$. Since when $d\leq 3$ the relative Harder-Narasimhan filtration of $v^{*}(E|_{X})$ is different from the case $d\geq 4$, we deal with this case first.

   Let $X$ be a quartic surface in a pencil that satisfies Condition \ref{condition1}. The Mukai vector of $E|_{X}$ is of the form $v_{d}=(2d^{2}+1, -dH, a)$. Since $E|_{X}$ is simple and rigid, $v_{d}^{2}=-2$, hence $a=1$.
  \\
\\
  \textbf{Step 1}: Prove the proposition for $0< d\leq 3$.
  
  In this case, there exists a constructive exceptional bundle $E$ on $\mathbb{P}^{3}$ that fits into the exact sequence
  $$0 \longrightarrow E \longrightarrow  \mathrm{H}^{0}(\mathcal{O}_{\mathbb{P}^{3}}(dH))\otimes \mathcal{O}_{\mathbb{P}^{3}} \longrightarrow \mathcal{O}_{\mathbb{P}^{3}}(dH) \longrightarrow 0. $$
  Since $d\leq 3$, the restriction map $ \mathrm{H}^{0}(\mathcal{O}_{\mathbb{P}^{3}}(dH)) \rightarrow  \mathrm{H}^{0}(\mathcal{O}_{X}(dH))$ is an isomorphism, and $E|_{X}$ fits into the following exact sequence
  $$0 \longrightarrow E|_{X} \longrightarrow  \mathrm{H}^{0}(\mathcal{O}_{X}(dH))\otimes \mathcal{O}_{X} \longrightarrow \mathcal{O}_{X}(dH) \longrightarrow 0. $$
  Since $E$ is constructive, by Theorem 2.3 of \cite{BP93}, $\mathrm{Hom}_{\mathbb{P}^{3}}(E,E(4H))=0$. By Proposition \ref{Pol}, any exceptional bundle $F$ with the same rank and degree as $E$ has $F\cong E$, hence Proposition \ref{EXdunique} is proved for $d\leq 3$. In the following we assume $d\geq 4$. 
  \\
  \\
  \textbf{Step 2}: Show that $v^{*}(E|_{X})$ is a rigid modification of $v^{*}(E_{X, d})$.
  
  If $X$ is smooth with Picard rank 1, then by Lemma \ref{stablespherical}, $E|_{X}\in M_{X, H}(v_{d})$. By \cite{BM14a} and Corollary 4.13 of \cite{Liu22} (or a direct computation), $E_{X, d}$ is stable for any $d>0$. Hence $E|_{X}\cong E_{X, d}$ when $\mathrm{Pic}(X)=\mathbb{Z}H$.
  
  For singular $X$ or $X$ with higher Picard rank, let $L'\subset |4H|_{X}$ be as in the beginning of the section and $\widetilde{X}$ be the universal family of $L'$. Since $L'$ satisfies Condition \ref{condition1}, for any $s\in L'$, there exists $u\in L'_{s}=\langle X, X_{s} \rangle$ such that $X_{u}$ is smooth with Picard rank 1. In particular $X_{u}\neq X$, hence $C_{s}=X\cap X_{u}$. By Lemma \ref{split}, we have
  $$(E|_{X})_{C_{s}}=E|_{C_{s}}=(E|_{X_{u}})|_{C_{s}}\cong E_{X_{u}, d}|_{C_{s}}\cong (\mathrm{H}^{0}(\mathcal{O}_{X}((d-4)H))\otimes \mathcal{O}_{C_{s}}) \oplus F_{C_{s}, d}\cong E_{X, d}|_{C_{s}}. $$
  Hence $v^{*}(E|_{X})$ is a modification of $\mathcal{E}_{X, d}$ (Definition \ref{modification}). By Lemma \ref{simplerigid}, it is rigid.
  \\
  \\
  \textbf{Step 3}: Find the invariant.

  The relative Harder-Narasimhan factors of $v^{*}(E|_{X})$ are modifications of the relative Harder-Narasimhan factors of $\mathcal{E}_{X, d}$. By Lemma \ref{FXdstable}, (\ref{relHN}) is the relative Harder-Narasimhan filtration of $\mathcal{E}_{X, d}$. Hence the relative Harder-Narasimhan filtration of $v^{*}(E|_{X})$ is of the following form
  \begin{equation}\label{GG}
    0 \longrightarrow G_{1} \longrightarrow v^{*}(E|_{X}) \longrightarrow G_{2} \longrightarrow 0,
  \end{equation}
  where $G_{1}$ is a modification of $ \mathrm{H}^{0}(\mathcal{O}_{\widetilde{X}}((d-4)H))\otimes \mathcal{O}_{\widetilde{X}}(-F)$ and $G_{2}$ is a modification of $\mathcal{F}_{\widetilde{X}, d}$. By Lemma \ref{fiberwisesimple}, $G_{2}\cong \mathcal{F}_{\widetilde{X}, d}\otimes \mathcal{O}_{\widetilde{X}}(bF)$ for some $b\in \mathbb{Z}$. Since $G_{1}, G_{2}$ are relative Harder-Narasimhan factors by Lemma \ref{FXdstable}, we have $p_{*}\underline{Hom}(G_{1}, G_{2})=0$, hence
  $$\mathrm{Hom}_{\widetilde{X}}(G_{1}, G_{2})=  \mathrm{H}^{0}(p_{*}\underline{Hom}(G_{1}, G_{2}))=0. $$
  Since $v^{*}(E|_{X})$ is rigid, by Mukai's Lemma (Lemma \ref{Mukai}) $G_{1}$ is also rigid. By Lemma \ref{fiberwisetrivial}, we have $G_{1}\cong  \mathrm{H}^{0}(\mathcal{O}_{\widetilde{X}}((d-4)H))\otimes \mathcal{O}_{\widetilde{X}}(-F+aF)$ for some $a\in \mathbb{Z}$.
  Let $p:=\mathrm{h}^{0}(\mathcal{O}_{X}((d-4)H))$, $c:=b-a$, and
  $$\mathcal{G}:=\underline{Hom}(\mathcal{F}_{\widetilde{X}, d}, \mathcal{O}_{\widetilde{X}}(-F))=\mathcal{F}_{\widetilde{X}, d}^{*} \otimes \mathcal{O}_{\widetilde{X}}(-F). $$
  Then by the Leray spectral sequence, $\mathrm{Ext}^{1}(\mathcal{F}_{\widetilde{X}, d}, \mathcal{O}_{\widetilde{X}}(-F))$ admits a filtrations whose factors are
  $ \mathrm{H}^{0}(R^{1}p_{*}\mathcal{G})$ and $ \mathrm{H}^{1}(p_{*}\mathcal{G})$. The invariant that we will use is $\mathrm{h}^{1}(R^{1}p_{*}\mathcal{G})$. We will show that the invariants of $v^{*}(E|_{X})$ and $v^{*}(E_{X, d})$ are equal. Explicitly, the goal is to show
  \begin{equation}\label{inv}
    \mathrm{h}^{1}(R^{1}p_{*}\mathcal{G})=\mathrm{h}^{1}(R^{1}p_{*}\underline{Hom}(\mathcal{F}_{\widetilde{X}, d}, \mathcal{O}_{\widetilde{X}}(-F))).
  \end{equation}
  \\
  \textbf{Step 4}: Prove (\ref{inv}).

  First note that $\mathrm{ext}^{1}(\mathcal{F}_{\widetilde{X}, d}, \mathcal{O}_{\widetilde{X}}(-F))=p$ for the following reason. Since $\mathcal{E}_{\mathcal{X}, d}$ is rigid, any general classes $\delta_{1}, \delta_{2}\in \mathrm{Ext}^{1}(\mathcal{F}_{\widetilde{X}, d}, \mathcal{O}_{\widetilde{X}}(-F)^{\oplus p})$ have the middle term isomorphic to $\mathcal{E}_{\widetilde{X}, d}$. Since $\mathrm{Hom}(\mathcal{F}_{\widetilde{X}, d}, \mathcal{O}_{\widetilde{X}}(-F))=0$, we have that $\delta_{1}=g(\delta_{2})$ for some $g\in \mathrm{GL}_{p}$ under the natural action of $\mathrm{GL}_{p}$ on $\mathrm{Ext}^{1}(\mathcal{F}_{\widetilde{X}, d}, \mathcal{O}_{\widetilde{X}}(-F)^{\oplus p})$ (say, by Lemma 6.3 in \cite{CH18}). Hence we have
  $$\mathrm{dim}\mathrm{Ext}^{1}(\mathcal{F}_{\widetilde{X}, d}, \mathcal{O}_{\widetilde{X}}(-F)^{\oplus p}) \leq \mathrm{dim}\mathrm{GL}_{p}=p^{2}, $$
  namely $\mathrm{ext}^{1}(\mathcal{F}_{\widetilde{X}, d}, \mathcal{O}_{\widetilde{X}}(-F))\leq p$. However, if $\mathrm{ext}^{1}(\mathcal{F}_{\widetilde{X}, d}, \mathcal{O}_{\widetilde{X}}(-F))<p$, then the sequence (\ref{relHN}) partially splits, i.e. there exists some summand $\mathcal{O}_{\widetilde{X}}(-F)\subset \mathcal{O}_{\widetilde{X}}(-F)^{\oplus p}$, such that the restriction of (\ref{relHN}) to it admits a section. In particular, $\mathcal{O}_{\widetilde{X}}(-F)$ is a summand of $\mathcal{E}_{\widetilde{X}, d}$. Hence $\mathcal{E}_{\widetilde{X}, d}$ is not simple, this contradicts Corollary \ref{Edrigid}. 
  
  Next, we claim that $ \mathrm{H}^{0}(R^{1}p_{*}\mathcal{G})=0$. Suppose not, then there exists a nonzero class $\delta\in \mathrm{H}^{0}(R^{1}p_{*}\mathcal{G}^{\oplus p})$ whose middle term is denoted by $E'$. By construction, there exists some fiber $C_{s}$, such that the restriction of $\delta$ to $C_{s}$ is nonsplit. 
  However, since the sequence (\ref{relHN}) in $\mathrm{Ext}^{1}(\mathcal{F}_{\widetilde{X}, d}, \mathcal{O}_{\widetilde{X}}(-F)^{\oplus p})$, $E'$ is a deformation of $E_{\widetilde{X}, d}$. Since $E_{\widetilde{X}, d}$ is rigid, $E' \cong E_{\widetilde{X}, d}$. By Lemma \ref{split}, $E'|_{C_{s}}$ splits, a contradiction. Hence $ \mathrm{H}^{0}(R^{1}p_{*}\mathcal{G})=0$, and $ \mathrm{h}^{1}(p_{*}\mathcal{G})=p\neq 0$.

  Note that $v^{*}(E|_{X})$ is a modification of $E_{X, d}$. By Lemma \ref{split}, $E|_{C_{s}}\cong \mathcal{O}_{C_{s}}^{\oplus p}\oplus F_{C_{s}, d}$. By the same reason applied to sequence (\ref{GG}), 
  $$ \mathrm{H}^{0}(R^{1}p_{*}\underline{Hom}(G_{2}, G_{1}))= \mathrm{H}^{0}(R^{1}p_{*}\underline{Hom}(\mathcal{F}_{\widetilde{X}, d}(bF), \mathcal{O}_{\widetilde{X}}(aF-F)^{\oplus p}))= \mathrm{H}^{0}(R^{1}p_{*}\mathcal{G}(c)) $$
  must vanish. Hence $\mathrm{h}^{0}(R^{1}p_{*}\mathcal{G}(c))=0$ and $\mathrm{h}^{1}(p_{*}\mathcal{G}(c))=p$.
  \\
  \\
  \textbf{Step 5}: Show that $v^{*}(E|_{X})\cong v^{*}(E_{X, d})$. 

  Since $\mathrm{h}^{1}(p_{*}\mathcal{G})=p\neq 0$ and $p_{*}\mathcal{G}$ is a sheaf on $\mathbb{P}^{1}$, we must have $c=0$. Hence $a=b$. Since the restrictions of $v^{*}(E|_{X})$ and $\mathcal{E}_{\widetilde{X}, d}$ to $\Sigma$ are both trivial, we have $a=b=0$. Hence $v^{*}(E|_{X})$ and $\mathcal{E}_{\widetilde{X}, d}$ are both rigid objects in $\mathrm{Ext}^{1}(\mathcal{F}_{\widetilde{X}, d}, \mathcal{O}_{\widetilde{X}}(-F)^{\oplus p})$, we must have $v^{*}(E|_{X})\cong \mathcal{E}_{\widetilde{X}, d}$. Taking pushforward along $v$, we get
  $$E|_{X}\cong v_{*}(v^{*}E|_{X})\cong v_{*}(\mathcal{E}_{\widetilde{X}, d}) \cong E_{X, d}. $$
  
\end{proof}

\section{Main results}\label{section5}
Recall that we have chosen a pencil of quartic surfaces $L\subset \mathbb{P}^{34}$ that satisfies Condition \ref{condition1}, and $\widetilde{\mathbb{P}^{3}}=\widetilde{\mathbb{P}^{3}_{L}}$ is the total space of the pencil of quartics parametrized by $L$. In this section we prove the following main theorem of this paper. The proof follows a similar strategy as that of Proposition \ref{EXdunique}.

\begin{theorem}\label{main}
  On $\mathbb{P}^{3}$, there exists no exceptional bundles of rank $2d^{2}+1$ and degree $d$ for every $|d|\geq 4$. 
\end{theorem}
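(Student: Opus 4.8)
The plan is to mimic the architecture of Proposition \ref{EXdunique}, but now working over the total space $\widetilde{\mathbb{P}^{3}}=\widetilde{\mathbb{P}^{3}_{L}}$ of the pencil $L$ rather than over a single $\widetilde{X}$. Suppose for contradiction that $E$ is an exceptional bundle of rank $2d^{2}+1$ and degree $d$ (after dualizing, assume $d\geq 4$). By Proposition \ref{EXdunique}, for each $X=X_{t}$ in the pencil we have $E|_{X}\cong E_{X,d}$. So $\nu^{*}E$ restricts on each quartic fiber $X_{t}$ to the explicitly-built bundle $E_{X_{t},d}$. First I would assemble these fiberwise isomorphisms into a statement about $\nu^{*}E$ as a bundle on $\widetilde{\mathbb{P}^{3}}$: namely, that $\nu^{*}E$ is a rigid modification (Definition \ref{modification}) of a globally-defined reference bundle $\mathcal{E}_{d}$ on $\widetilde{\mathbb{P}^{3}}$, built exactly as in (\ref{EXd})–(\ref{FXd}) but using the relative evaluation map $p^{*}p_{*}\mathcal{O}_{\widetilde{\mathbb{P}^{3}}}(dH)\to \mathcal{O}_{\widetilde{\mathbb{P}^{3}}}(dH)$ for $\pi:\widetilde{\mathbb{P}^{3}}\to L$. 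The point of Section \ref{section4} is precisely that each fiber agrees, so by Lemma \ref{locally} the two bundles are locally isomorphic over $L$, i.e. $\nu^{*}E$ is a modification of $\mathcal{E}_{d}$; rigidity comes from Lemma \ref{simplerigid} together with Mukai's Lemma.

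Next I would run the relative Harder–Narasimhan analysis one dimension up. The analogue of (\ref{relHN}) is a short exact sequence
\begin{equation*}
  0 \longrightarrow G_{1} \longrightarrow \nu^{*}E \longrightarrow G_{2} \longrightarrow 0
\end{equation*}
over $\widetilde{\mathbb{P}^{3}}$, where $G_{1}$ is a modification of the kernel term (a twist of $\mathrm{H}^{0}(\mathcal{O}((d-4)H))\otimes\mathcal{O}(-F)$-type bundle along $L$) and $G_{2}$ is a modification of the relative $\mathcal{F}$-bundle. Here $\pi$ has one-dimensional base $L\cong \mathbb{P}^{1}$, so the relevant modification groups are again classified by $\mathrm{Pic}(L)=\mathbb{Z}$ and by rank-$p$ bundles on $\mathbb{P}^{1}$ (via the $\mathrm{H}^{1}(L,\mathcal{G})$ formalism of Lemmas \ref{fiberwisesimple}–\ref{fiberwisetrivial}). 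The same fiberwise-simplicity and $p_{*}\underline{Hom}(G_{1},G_{2})=0$ arguments should force $G_{2}\cong \mathcal{F}\otimes\pi^{*}\mathcal{O}_{L}(bF)$ and $G_{1}$ a trivial modification twisted by $\mathcal{O}_{L}(aF)$, and the same invariant $\mathrm{h}^{1}(R^{1}\pi_{*}\mathcal{G})$ should pin down $a=b=0$ after restricting to the exceptional divisor of $\nu$ (which is trivial on both $\nu^{*}E$ and $\mathcal{E}_{d}$). This yields $\nu^{*}E\cong \mathcal{E}_{d}$ on the total space.

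The payoff, and the genuinely new ingredient relative to Section \ref{section4}, is descent: since $E=\nu_{*}\nu^{*}E$ is pulled back from $\mathbb{P}^{3}$ via $\nu:\widetilde{\mathbb{P}^{3}}=\mathrm{BL}_{C}\mathbb{P}^{3}\to\mathbb{P}^{3}$, the bundle $\nu^{*}E$ must be trivial on the fibers of $\nu$, i.e. constant along the exceptional divisor over the base curve $C$. I would therefore compute the restriction of $\mathcal{E}_{d}$ (equivalently of $E_{X,d}$ as the fiber varies) to the $\nu$-exceptional locus and show that $\mathcal{E}_{d}$ \emph{cannot} arise as a pullback from $\mathbb{P}^{3}$: the reference bundle $\mathcal{E}_{d}$, having been built from the relative evaluation sequence over $L$, carries nontrivial variation along $L$ recorded by the twist by $\mathcal{O}_{\widetilde{\mathbb{P}^{3}}}(F)$ in the kernel term $\mathrm{H}^{0}(\mathcal{O}((d-4)H))\otimes\mathcal{O}(-F)$, and this variation is incompatible with descent along $\nu$. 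Concretely, the restrictions of $\mathcal{E}_{d}$ to two distinct fibers of $\nu$ meeting different quartics $X_{t},X_{t'}$ are forced by the construction to differ, whereas a genuine pullback would restrict identically; this is the contradiction.

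The main obstacle will be this final descent step: one has an abstract isomorphism $\nu^{*}E\cong\mathcal{E}_{d}$ but must translate it into a concrete incompatibility with $E$ living on $\mathbb{P}^{3}$. The cleanest route is to identify a geometric cycle on $\widetilde{\mathbb{P}^{3}}$ contracted by $\nu$ (a curve in the exceptional divisor over $C$, sweeping across the fibers $X_{t}$) and show that $\mathcal{E}_{d}$ restricted to it is non-constant—so it is not the pullback of any bundle on $\mathbb{P}^{3}$—while any $\nu^{*}E$ must be constant there. Verifying that the twist by $\mathcal{O}(-F)$ in the kernel term genuinely obstructs triviality along such contracted curves (for $d\geq 4$, where $\mathrm{h}^{0}(\mathcal{O}((d-4)H))>0$ so the kernel term is actually present) is the crux, and it is exactly the place where the hypothesis $|d|\geq 4$ must be used; the small-degree cases $|d|\leq 3$ escape precisely because that kernel term vanishes and $E$ is genuinely constructive.
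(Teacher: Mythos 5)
Your skeleton is the paper's own: apply Proposition \ref{EXdunique} fiberwise, realize $\nu^{*}E$ as a modification of the relative bundle $\mathcal{E}_{d}$ defined by the relative evaluation sequence (\ref{releval}), show the modification is trivial, and derive the contradiction by restricting to the exceptional divisor $Y$ of $\nu$, where the $\mathcal{O}(jF)$-twists carried by $\mathcal{E}_{d}|_{Y}$ (present exactly when $d\geq 4$) are incompatible with $\nu^{*}E|_{Y}$ being pulled back from the base curve $C$. That last step is precisely the paper's Lemma \ref{restriction} together with the closing argument of Theorem \ref{main}, and your identification of where $|d|\geq 4$ enters (nonvanishing of $\mathrm{H}^{0}(\mathcal{O}((d-4)H))$, versus the constructive cases $|d|\leq 3$) is correct.

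The middle of your proposal, however, transplants the Harder--Narasimhan machinery of Section \ref{section4} into a setting where it neither applies nor is needed. Over $\pi:\widetilde{\mathbb{P}^{3}}\to L$ the fibers $\mathcal{E}_{d}|_{X_{t}}\cong E_{X_{t},d}$ are \emph{simple and rigid} (Corollary \ref{Edrigid}); this is the opposite of the situation over the curve fibration $p:\widetilde{X}\to L'$, where $E_{X,d}|_{C_{s}}$ splits (Lemma \ref{split}) and that splitting is what forces the HN analysis, the kernel term $\mathrm{H}^{0}(\mathcal{O}((d-4)H))\otimes\mathcal{O}(-F)$, and the invariant $\mathrm{h}^{1}(R^{1}p_{*}\mathcal{G})$. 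Relative to $\pi$ there is no such kernel term and no exact sequence $0\to G_{1}\to\nu^{*}E\to G_{2}\to 0$ of the kind you posit: the relative HN filtration of a fiberwise simple (indeed fiberwise stable, for general $t$) bundle is trivial. The sequence (\ref{relHN}) lives on $\widetilde{X}$, not on $\widetilde{\mathbb{P}^{3}}$. Fortunately this makes the argument shorter, not longer: fiberwise simplicity lets Lemma \ref{fiberwisesimple} apply verbatim, giving $\nu^{*}E\cong\mathcal{E}_{d}\otimes\pi^{*}\mathcal{L}$ in one step (with condition (2) of Lemma \ref{locally} checked either by rigidity of the fibers or, as in Lemma \ref{locally1}, by exceptionality and Serre duality). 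Two further corrections: your parenthetical claim that $\mathcal{E}_{d}$ restricts trivially to the $\nu$-exceptional divisor is false --- Lemma \ref{restriction} computes exactly the opposite, and your own final step relies on its being false; the trivial-restriction device belongs to $\Sigma\subset\widetilde{X}$ (Step 5 of Proposition \ref{EXdunique}), not to $Y\cong C\times L$. And you need not pin the twist down to $a=b=0$ at all: the contradiction at $Y$ is insensitive to a uniform twist by $\pi^{*}\mathcal{L}$, because Lemma \ref{restriction} exhibits summands of $\mathcal{E}_{d}|_{Y}$ with at least two \emph{distinct} $F$-twists once $d\geq 4$, whereas any bundle pulled back from $C$ and twisted by a single $\pi^{*}\mathcal{L}$ has only one. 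With the HN detour deleted and replaced by the one-line application of Lemma \ref{fiberwisesimple}, your outline becomes the paper's proof; what then remains to make it complete is the actual computation of $\mathcal{E}_{d}|_{Y}$ (Lemma \ref{restriction}, via the inductive computation of $\pi_{*}\mathcal{O}_{\widetilde{\mathbb{P}^{3}}}(dH)$), which you correctly flag as the crux but leave unverified.
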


We first make the following observation.

\begin{lemma}\label{locally1}
  Let $E$ be an exceptional vector bundle on $\widetilde{\mathbb{P}^{3}}$ and $F$ be a vector bundle such that $E|_{X_{t}}\cong F|_{X_{t}}$ for all $t\in L$. Then $F$ is a modification of $E$ in the sense of Definition \ref{modification}.
\end{lemma}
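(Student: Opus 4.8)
The plan is to exhibit $F$ as a modification of $E$ by a direct application of Lemma~\ref{locally} to the fibration $\pi:\widetilde{\mathbb{P}^{3}}\longrightarrow L$. Here $L\cong\mathbb{P}^{1}$ is a smooth curve and $\pi$ is flat and projective: its total space $\widetilde{\mathbb{P}^{3}}=\mathrm{BL}_{C}\mathbb{P}^{3}$ is integral and dominates the smooth curve $L$, so the morphism is automatically flat (including over the finitely many singular quartic fibers). Both $E$ and $F$ are locally free, and hypothesis~(1) of Lemma~\ref{locally} is exactly the given isomorphisms $E|_{X_{t}}\cong F|_{X_{t}}$. The conclusion of Lemma~\ref{locally}, namely that each $t\in L$ admits a neighborhood $U_{t}$ with $E|_{\pi^{-1}(U_{t})}\cong F|_{\pi^{-1}(U_{t})}$, is verbatim the statement that $F$ is a modification of $E$ in Definition~\ref{modification}. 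So everything reduces to verifying hypothesis~(2): that $\dim\mathrm{Ext}^{1}_{X_{t}}(E|_{X_{t}},E|_{X_{t}})$ is independent of $t$. This reduction is routine; the single nontrivial step, and the one I regard as the main obstacle, is establishing this constancy.

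To verify hypothesis~(2), I would use the exceptionality of $E$ on the total space. Write $\mathcal{A}=\underline{End}(E)$, so that $\mathrm{Ext}^{i}_{\widetilde{\mathbb{P}^{3}}}(E,E)=\mathrm{H}^{i}(\widetilde{\mathbb{P}^{3}},\mathcal{A})$ and, fiberwise, $\mathrm{Ext}^{1}_{X_{t}}(E|_{X_{t}},E|_{X_{t}})=\mathrm{H}^{1}(X_{t},\mathcal{A}|_{X_{t}})$ since $E$ is locally free (so $\mathcal{A}|_{X_{t}}=\underline{End}(E|_{X_{t}})$ and the local $\underline{\mathrm{Ext}}^{1}$ vanishes). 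The key geometric point is that each fiber is the divisor $X_{t}=\pi^{*}(t)$, whence $\mathcal{O}_{\widetilde{\mathbb{P}^{3}}}(X_{t})\cong\pi^{*}\mathcal{O}_{L}(1)$; because all points of $L\cong\mathbb{P}^{1}$ are linearly equivalent, this line bundle is literally the same for every $t$.

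I would then restrict $\mathcal{A}$ to a fiber through
\[
0 \longrightarrow \mathcal{A}\otimes \pi^{*}\mathcal{O}_{L}(-1) \longrightarrow \mathcal{A} \longrightarrow \mathcal{A}|_{X_{t}} \longrightarrow 0
\]
and pass to cohomology. Exceptionality gives $\mathrm{H}^{1}(\mathcal{A})=\mathrm{Ext}^{1}(E,E)=0$ and $\mathrm{H}^{2}(\mathcal{A})=\mathrm{Ext}^{2}(E,E)=0$, so the connecting homomorphism is an isomorphism
\[
\mathrm{Ext}^{1}_{X_{t}}(E|_{X_{t}},E|_{X_{t}})=\mathrm{H}^{1}(X_{t},\mathcal{A}|_{X_{t}})\;\cong\;\mathrm{H}^{2}\bigl(\widetilde{\mathbb{P}^{3}},\,\mathcal{A}\otimes \pi^{*}\mathcal{O}_{L}(-1)\bigr).
\]
The right-hand side does not involve $t$ at all, so its dimension is a single fixed number; this establishes hypothesis~(2) and finishes the proof. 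The mechanism is worth emphasizing: exceptionality of $E$ forces the two outer groups $\mathrm{H}^{1}(\mathcal{A})$ and $\mathrm{H}^{2}(\mathcal{A})$ to vanish, converting a possibly $t$-varying fiberwise $\mathrm{ext}^{1}$ into a rigidly $t$-independent cohomology group on the total space. I expect no further difficulties beyond the bookkeeping of flatness and the identification $\mathcal{A}|_{X_{t}}\cong\underline{End}(E|_{X_{t}})$, both of which hold uniformly because $E$ is locally free everywhere.
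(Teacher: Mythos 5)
Your proof is correct, and it reaches the goal by a genuinely different verification of the key hypothesis. Like the paper, you reduce everything to Lemma \ref{locally}: condition (1) is the assumption, and the whole content is condition (2), the $t$-independence of $\dim\mathrm{Ext}^{1}_{X_{t}}(E|_{X_{t}},E|_{X_{t}})$. The paper verifies this by proving the stronger statement that these groups all \emph{vanish}: it uses the special geometry $\omega_{\widetilde{\mathbb{P}^{3}}}\cong\mathcal{O}_{\widetilde{\mathbb{P}^{3}}}(-X_{t})$ (quartics are anticanonical in $\mathbb{P}^{3}$) together with Serre duality to get $\mathrm{Ext}^{2}(E,E(-X_{t}))\cong\mathrm{Ext}^{1}(E,E)^{*}=0$, and then the restriction sequence forces $\mathrm{Ext}^{1}_{X_{t}}(E_{t},E_{t})=0$. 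You instead avoid Serre duality entirely: from the same restriction sequence for $\mathcal{A}=\underline{End}(E)$ you use $\mathrm{H}^{1}(\mathcal{A})=\mathrm{Ext}^{1}(E,E)=0$ and $\mathrm{H}^{2}(\mathcal{A})=\mathrm{Ext}^{2}(E,E)=0$ to identify the fiberwise group with the fixed space $\mathrm{H}^{2}\bigl(\widetilde{\mathbb{P}^{3}},\mathcal{A}\otimes\pi^{*}\mathcal{O}_{L}(-1)\bigr)$, which is $t$-independent because all fibers of $\pi$ are linearly equivalent divisors. Your argument is more elementary and more general --- it works for any fibration over $\mathbb{P}^{1}$ and never needs the fiber class to be anticanonical, though it only yields constancy, not vanishing (which suffices, since Lemma \ref{locally} asks only for constancy); the paper's argument is tied to the anticanonical geometry but pins down the constant as $0$. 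Both routes use exceptionality of $E$ on the total space $\widetilde{\mathbb{P}^{3}}$, just through different parts of it ($\mathrm{Ext}^{1}=0$ alone for the paper, $\mathrm{Ext}^{1}=\mathrm{Ext}^{2}=0$ for you), and your flatness remark and the identification $\mathcal{A}|_{X_{t}}\cong\underline{End}(E|_{X_{t}})$ are both sound since $E$ is locally free and $\widetilde{\mathbb{P}^{3}}$ is integral over the smooth curve $L$.
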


\begin{proof}
  The lemma will follow from Lemma \ref{locally}. By assumption, condition (1) in Lemma \ref{locally} is satisfied. Hence it suffices to check condition (2) in Lemma \ref{locally}. Since $\widetilde{\mathbb{P}^{3}}=\mathrm{BL}_{C}\mathbb{P}^{3}$, we have
  $$\omega_{\widetilde{\mathbb{P}^{3}}}\cong \nu^{*}\omega_{\mathbb{P}^{3}}\otimes \mathcal{O}_{\widetilde{\mathbb{P}^{3}}}(Y)\cong \mathcal{O}_{\widetilde{\mathbb{P}^{3}}}(-4H+Y)\cong \mathcal{O}_{\widetilde{\mathbb{P}^{3}}}(-X_{t}), \forall t\in L.$$
  Since $E$ is exceptional, we have $\mathrm{Ext}^{2}(E, E(-X_{t}))\cong \mathrm{Ext}^{1}(E,E)^{*}=0$ by Serre Duality. Hence the following exact sequence
  $$0=\mathrm{Ext}^{1}(E,E) \longrightarrow \mathrm{Ext}^{1}(E, E|_{X_{t}}) \longrightarrow \mathrm{Ext}^{2}(E, E(-X_{t}))=0$$
  implies that $\mathrm{Ext}^{1}_{X_{t}}(E_{t}, E_{t})=0$ for all $t\in L$, condition (2) in Lemma \ref{locally} is satisfied.
\end{proof}

As an application, we prove Theorem 1.2 in \cite{Pol11} from a different perspective.

\begin{corollary}[Theorem 1.2, \cite{Pol11}]\label{Polcor}
  Let $E, F$ be exceptional bundles on $\mathbb{P}^{3}$ with the same rank and degree. If $\mathrm{Ext}^{1}_{\mathbb{P}^{3}}(E, E(4H))=0$, then $E\cong F$. 
\end{corollary}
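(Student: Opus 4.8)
The plan is to run the argument of Proposition \ref{Pol} one dimension up, with the total family $\nu: \widetilde{\mathbb{P}^{3}}\to\mathbb{P}^{3}$, $\pi:\widetilde{\mathbb{P}^{3}}\to L$ of a very general pencil satisfying Condition \ref{condition1}, and with Lemma \ref{locally1} playing the role that Lemma \ref{locally} played on surfaces. Since $\nu$ is the blow-up of the smooth curve $C$, we have $R\nu_{*}\mathcal{O}_{\widetilde{\mathbb{P}^{3}}}\cong\mathcal{O}_{\mathbb{P}^{3}}$, whence $\mathrm{RHom}(\nu^{*}E,\nu^{*}E)\cong\mathrm{RHom}(E,E)=\mathbb{C}$, so $\nu^{*}E$ is again exceptional; moreover $\nu_{*}\nu^{*}G\cong G$ for any $G$, so it suffices to prove $\nu^{*}E\cong\nu^{*}F$ and then push forward along $\nu$.

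First I would prepare Proposition \ref{Pol} on each fiber. For $X=X_{t}\in|4H|$, tensoring $0\to\mathcal{O}(-4H)\to\mathcal{O}\to\mathcal{O}_{X}\to 0$ with $\underline{\mathrm{End}}(E)(4H)$ and taking cohomology gives the exact piece $\mathrm{Ext}^{1}_{\mathbb{P}^{3}}(E,E(4H))\to\mathrm{Ext}^{1}_{X}(E|_{X},E|_{X}(4H))\to\mathrm{Ext}^{2}_{\mathbb{P}^{3}}(E,E)$, whose outer terms vanish by hypothesis and by exceptionality; Serre duality on the $K3$ then converts the middle term into $\mathrm{Ext}^{1}_{X}(E|_{X},E|_{X}(-4H))=0$, exactly the hypothesis of Proposition \ref{Pol}. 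On the smooth Picard-rank-$1$ members both restrictions are spherical (Lemma \ref{stablespherical}) with the same Mukai vector — pinned down by the common rank, the common degree, and $v^{2}=-2$ — so Lemma \ref{sphericalunique} gives $E|_{X_{t}}\cong F|_{X_{t}}$ directly. For the remaining members I would cover $X_{t}$ by the pencil of curves $C_{s}=X_{t}\cap X_{s}$ of Section \ref{section4}; by Condition \ref{condition1} each $C_{s}$ lies on a smooth Picard-rank-$1$ quartic, on which $E$ and $F$ restrict isomorphically by the previous case, so $E|_{C_{s}}\cong F|_{C_{s}}$, and Proposition \ref{Pol} (both restrictions being simple by Lemma \ref{simplerigid}) yields $E|_{X_{t}}\cong F|_{X_{t}}$ for every $t\in L$.

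With fiberwise agreement in hand, Lemma \ref{locally1} shows that $\nu^{*}F$ is a modification of the exceptional bundle $\nu^{*}E$. Since every $E|_{X_{t}}$ is simple, $\nu^{*}E$ is fiberwise simple, so Lemma \ref{fiberwisesimple} forces the modification to be trivial: $\nu^{*}F\cong\nu^{*}E\otimes\pi^{*}\mathcal{L}$ for some $\mathcal{L}\in\mathrm{Pic}(L)\cong\mathbb{Z}$. Comparing first Chern classes settles the twist, since both sides pull back $c_{1}(E)=c_{1}(F)$ while $\pi^{*}\mathcal{L}$ would contribute $r\,c_{1}(\pi^{*}\mathcal{L})$, a nonzero multiple of the fiber class unless $\mathcal{L}\cong\mathcal{O}_{L}$ (equivalently, restrict to the exceptional divisor of $\nu$, on which $\nu^{*}E$ and $\nu^{*}F$ are trivial but the twists are not, just as in Proposition \ref{Pol}). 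Hence $\nu^{*}F\cong\nu^{*}E$, and pushing forward gives $E\cong F$.

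The main obstacle is the fiberwise step, namely obtaining $E|_{X_{t}}\cong F|_{X_{t}}$ for \emph{all} $t\in L$ and not merely the generic $t$: Lemma \ref{sphericalunique} relies on stability and so applies only to the smooth Picard-rank-$1$ fibers, whereas on the singular or otherwise special members the restrictions are only simple and rigid, where a nonzero homomorphism need not be an isomorphism. Bridging those members forces one back to Proposition \ref{Pol} together with the covering-by-curves geometry of Section \ref{section4}, which is the technical heart of the argument.
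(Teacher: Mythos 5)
Your proposal is correct and takes essentially the same route as the paper's own proof: the restriction exact sequence plus Serre duality to get $\mathrm{Ext}^{1}_{X}(E|_{X},E|_{X}(-4H))=0$, then Proposition \ref{Pol} to obtain $E|_{X}\cong F|_{X}$ for all $X\in L$, and finally Lemma \ref{locally1} together with Lemma \ref{fiberwisesimple} to see that $\nu^{*}F$ is a trivial modification of $\nu^{*}E$. The only difference is that you make explicit two steps the paper leaves implicit --- verifying the curve-wise hypothesis $E|_{C_{s}}\cong F|_{C_{s}}$ of Proposition \ref{Pol} via Condition \ref{condition1} and Lemmas \ref{stablespherical}, \ref{sphericalunique}, and ruling out a nontrivial twist $\pi^{*}\mathcal{L}$ by comparing $c_{1}$ (or restricting to the exceptional divisor) --- which is a completion of the same argument, not a different one.
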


\begin{proof}
By assumption, the following exact sequence
  $$0= \mathrm{Ext}^{1}_{\mathbb{P}^{3}}(E, E(4H)) \longrightarrow \mathrm{Ext}^{1}_{X}(E|_{X}, E|_{X}(4H)) \longrightarrow \mathrm{Ext}^{2}_{\mathbb{P}^{3}}(E, E)=0$$
  implies that $\mathrm{Ext}^{1}_{X}(E|_{X}, E|_{X}(-4H))=\mathrm{Ext}^{1}_{X}(E|_{X}, E|_{X}(4H))^{*}=0$. By Proposition \ref{Pol}, we have $E|_{X}\cong F|_{X}$ for all $X\in L$. By Lemma \ref{locally1}, $\nu^{*}F$ is a modification of $\nu^{*}(E)$. By Lemma \ref{simplerigid}, they are fiberwise simple. By Lemma \ref{fiberwisesimple}, the modification is trivial. 
\end{proof}

We construct a sheaf $\mathcal{E}_{d}$ on $\widetilde{\mathbb{P}^{3}}$ that is swept out by $E_{X, d}$ when $X$ varies in the pencil $L$, i.e. $\mathcal{E}_{d}|_{X}\cong E_{X, d}$ for all $[X]\in L$. One way to do this is to let $\mathcal{E}_{d}$ fit into the following exact sequence
\begin{equation}\label{releval}
0 \longrightarrow \mathcal{E}_{d} \longrightarrow \pi^{*}(\pi_{*}\mathcal{O}_{\widetilde{\mathbb{P}^{3}}}(dH)) \overset{ev}{\longrightarrow} \mathcal{O}_{\widetilde{\mathbb{P}^{3}}}(dH) \longrightarrow 0.
\end{equation}

Recall that $C$ is the base curve of $L\subset \mathbb{P}^{34}$ and $\widetilde{\mathbb{P}^{3}}=\mathrm{BL}_{C}\mathbb{P}^{3}$. Let $Y$ be the exceptional divisor of $\nu: \widetilde{\mathbb{P}^{3}} \longrightarrow \mathbb{P}^{3}$. Let $W_{n}:=  \mathrm{H}^{0}(C, \mathcal{O}_{C}(n))$ for all $n\in \mathbb{Z}$, $w_{n}=\mathrm{dim}W_{n}$, and $F$ be the fiber class of $\pi$. Write $d=4k+e$ for $k\in \mathbb{Z}$ and $0\leq e < 4$ uniquely.

\begin{lemma}\label{restriction}
  For $d\geq 0$, we have
  $$\mathcal{E}_{d}|_{Y}\cong \left[\bigoplus_{j=1}^{k} (\mathcal{O}_{Y}(jF)\otimes W_{d-4j})\right] \oplus \nu^{*}F_{C, d}, $$
  where $F$ is the fiber class of $\pi$, and $F_{C, d}$ is defined in (\ref{FCd}). 
\end{lemma}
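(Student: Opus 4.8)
The plan is to pull the defining sequence (\ref{releval}) back to the exceptional divisor $Y$ and to resolve the restriction using the product structure of $Y$. Since the base curve $C$ is the complete intersection of two quartics, its normal bundle is $N_{C/\mathbb{P}^3}\cong\mathcal{O}_C(4)^{\oplus 2}$, hence $Y=\mathbb{P}(N_{C/\mathbb{P}^3})\cong C\times L$, and under this identification $\nu|_Y=:\mathrm{pr}_C$ and $\pi|_Y=:\mathrm{pr}_L$ are the two projections. Consequently $H|_Y=\mathrm{pr}_C^{*}\mathcal{O}_C(1)$ and, since $F=\pi^{*}\mathcal{O}_L(1)$, also $\mathcal{O}_Y(jF)=\mathrm{pr}_L^{*}\mathcal{O}_L(j)$; moreover $4H=F+Y$ on $\widetilde{\mathbb{P}^3}$. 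I would record these identities first. Because $\mathcal{E}_d$ and $\mathcal{O}_{\widetilde{\mathbb{P}^3}}(dH)$ are locally free, restricting (\ref{releval}) to $Y$ kills the $\mathrm{Tor}$ term and remains short exact, giving
$$0\longrightarrow \mathcal{E}_d|_Y\longrightarrow \mathrm{pr}_L^{*}V_d\overset{ev_Y}{\longrightarrow}\mathcal{O}_Y(dH)\longrightarrow 0,\qquad V_d:=\pi_{*}\mathcal{O}_{\widetilde{\mathbb{P}^3}}(dH).$$

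Next I would produce a recursion for $V_d$ on $L\cong\mathbb{P}^1$. Restricting the structure sequence of $Y$ and twisting gives $0\to\mathcal{O}(dH-Y)\to\mathcal{O}(dH)\to\mathcal{O}_Y(dH)\to0$; as $dH-Y=(d-4)H+F$, the projection formula together with the vanishing $H^{1}(X_t,\mathcal{O}((d-4)H))=0$ on every (possibly singular) quartic fiber, which is immediate from the Koszul resolution on $\mathbb{P}^3$, yields after applying $\pi_{*}$
$$0\longrightarrow V_{d-4}\otimes\mathcal{O}_L(1)\longrightarrow V_d\longrightarrow W_d\otimes\mathcal{O}_L\longrightarrow 0,$$
where $\pi_{*}\mathcal{O}_Y(dH)=W_d\otimes\mathcal{O}_L$ by the product structure and the right-hand map is restriction of sections to $C$. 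Since each graded piece of $V_{d-4}\otimes\mathcal{O}_L(1)$ has degree $\geq 1$, one has $\mathrm{Ext}^{1}_L(W_d\otimes\mathcal{O}_L,V_{d-4}(1))=W_d^{*}\otimes H^{1}(L,V_{d-4}(1))=0$, so this sequence splits; inducting down to the base case $V_e=W_e\otimes\mathcal{O}_L$ gives $V_d\cong\bigoplus_{j=0}^{k}\mathcal{O}_L(j)^{\oplus w_{d-4j}}$, and in particular a splitting $\sigma\colon W_d\otimes\mathcal{O}_L\to V_d$.

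The crux is to factor $ev_Y$. Evaluating a section $s\in H^{0}(X_t,\mathcal{O}(dH))$ at a point $c\in C$ is the same as evaluating $s|_C$ at $c$, so $ev_Y$ equals the composite
$$\mathrm{pr}_L^{*}V_d\overset{\alpha}{\longrightarrow}W_d\otimes\mathcal{O}_Y\overset{\mathrm{pr}_C^{*}ev_C}{\longrightarrow}\mathcal{O}_Y(dH),$$
where $\alpha$ is the pullback of the restriction surjection $V_d\to W_d\otimes\mathcal{O}_L$ from the previous step and $\mathrm{pr}_C^{*}ev_C$ is the pullback of the evaluation map in (\ref{FCd}), whose kernel is $\mathrm{pr}_C^{*}F_{C,d}=\nu^{*}F_{C,d}$. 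Both factors are surjective, so the kernel of the composite sits in
$$0\longrightarrow\ker\alpha\longrightarrow\mathcal{E}_d|_Y\longrightarrow\nu^{*}F_{C,d}\longrightarrow 0,\qquad \ker\alpha=\mathrm{pr}_L^{*}V_{d-4}\otimes\mathcal{O}_Y(F).$$
By the splitting of $V_{d-4}$ established above, $\ker\alpha\cong\bigoplus_{j=1}^{k}\mathcal{O}_Y(jF)\otimes W_{d-4j}$, exactly the first summand in the statement.

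Finally I would split this last sequence, which is where the main obstacle lies. The extension group $\mathrm{Ext}^{1}_Y(\nu^{*}F_{C,d},\ker\alpha)$ does \emph{not} vanish — via Künneth and $K_C=\mathcal{O}_C(4)$ one sees $\mathrm{Ext}^{1}_C(F_{C,d},\mathcal{O}_C)\neq0$ — so the splitting cannot be produced formally and must instead be imported from a splitting of the ambient map $\alpha$. Pulling back the section $\sigma$ gives a section $\mathrm{pr}_L^{*}\sigma$ of $\alpha$; restricted to the subsheaf $\nu^{*}F_{C,d}=\ker(\mathrm{pr}_C^{*}ev_C)\subset W_d\otimes\mathcal{O}_Y$ it lands in $\ker(ev_Y)=\mathcal{E}_d|_Y$, and composed with the quotient map of the sequence above (which is induced by $\alpha$) it is the identity. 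This provides the required splitting and yields
$$\mathcal{E}_d|_Y\cong\Big[\bigoplus_{j=1}^{k}\mathcal{O}_Y(jF)\otimes W_{d-4j}\Big]\oplus\nu^{*}F_{C,d}.$$
The delicate points to get right are the product structure $Y\cong C\times L$ with the correct identification of $F|_Y$, and the verification that the two factors of $ev_Y$ are precisely the restriction-to-$C$ surjection and the pulled-back evaluation $ev_C$; once these are in place the remaining steps are bookkeeping.
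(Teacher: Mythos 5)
Your proposal is correct and follows essentially the same route as the paper: identify $Y\cong C\times L$, compute $\pi_{*}\mathcal{O}_{\widetilde{\mathbb{P}^{3}}}(dH)$ by induction using the sequence $0\to\mathcal{O}(dH-Y)\to\mathcal{O}(dH)\to\mathcal{O}_{Y}(dH)\to 0$ together with the $\mathrm{Ext}^{1}$-vanishing on $L\cong\mathbb{P}^{1}$ to split it, then restrict the evaluation sequence to $Y$ and observe that the evaluation factors through restriction of sections to $C$. Your last step, producing the direct-sum decomposition of $\mathcal{E}_{d}|_{Y}$ from an explicit section of the restriction surjection rather than formally from an $\mathrm{Ext}^{1}$-vanishing, is just a more carefully spelled-out version of what the paper does implicitly when it says the evaluation vanishes on the summands $\mathcal{O}_{Y}(jF)\otimes W_{d-4j}$, $j\geq 1$, and equals $\nu^{*}ev_{C}$ on $\mathcal{O}_{Y}\otimes W_{d}$.
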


\begin{proof}
  Since $C$ is a $(4,4)$-complete intersection, we have $Y\cong C\times L$. In $\mathrm{Pic}(\widetilde{\mathbb{P}^{3}})$, we have $Y+F=4H$. To compute $\pi_{*}\mathcal{O}_{\widetilde{\mathbb{P}^{3}}}(dH)$, we will apply $\pi_{*}$ to the following exact sequence on $\widetilde{\mathbb{P}^{3}}$:
  \begin{equation}
    0 \longrightarrow \mathcal{O}_{\widetilde{\mathbb{P}^{3}}}(dH-Y) \longrightarrow \mathcal{O}_{\widetilde{\mathbb{P}^{3}}}(dH) \longrightarrow \mathcal{O}_{Y}(dH) \longrightarrow 0.
  \end{equation}
  Let $X$ be any fiber of $\pi: \widetilde{\mathbb{P}^{3}} \rightarrow L$. Then $X$ has $ \mathrm{H}^{1}(X, \mathcal{O}_{X})=0$ and $\omega_{X}\cong \mathcal{O}_{X}$. By the Kodaira Vanishing Theorem and Serre duality, we have $ \mathrm{H}^{1}(X, \mathcal{O}_{X}(nH))=0$ for all $n\in \mathbb{Z}$. Hence we have
  $$R^{1}\pi_{*}\mathcal{O}_{\widetilde{\mathbb{P}^{3}}}(dH-Y)=R^{1}\pi_{*}\mathcal{O}_{\widetilde{\mathbb{P}^{3}}}((d-4)H+F)\cong R^{1}\pi_{*}\mathcal{O}_{\widetilde{\mathbb{P}^{3}}}((d-4)H)(1)=0.$$
   Then we have
  $$\pi_{*}\mathcal{O}_{Y}(dH)=\pi_{*}\mathcal{O}_{C\times L}(dH)= \mathcal{O}_{L}\otimes  \mathrm{H}^{0}(C, \mathcal{O}_{C}(dH))= \mathcal{O}_{L}\otimes W_{d}. $$
  Then we claim that for $d\geq 0$, we have
  \begin{eqnarray*}
    & &\pi_{*}\mathcal{O}_{\widetilde{\mathbb{P}^{3}}}(dH) \cong \bigoplus_{j=0}^{k} (\mathcal{O}_{L}(j)\otimes W_{d-4j}) \\
   & \cong & (\mathcal{O}_{L}(k)\otimes W_{e}) \oplus (\mathcal{O}_{L}(k-1)\otimes W_{e+4}) \oplus \cdots \oplus (\mathcal{O}_{L}(1)\otimes W_{d-4}) \oplus (\mathcal{O}_{L}\otimes W_{d}).
    \end{eqnarray*}
  We use induction on $k$. By the projection formula, we have
  $$\pi_{*}\mathcal{O}_{\widetilde{\mathbb{P}^{3}}}(dH-Y)=\pi_{*}\mathcal{O}_{\widetilde{\mathbb{P}^{3}}}((d-4)H+F)=\pi_{*}\mathcal{O}_{\widetilde{\mathbb{P}^{3}}}((d-4)H)(1).$$
  When $k=0$, namely $0\leq d < 4$, we have $ \mathrm{H}^{0}(X, \mathcal{O}_{X}((d-4)H))=0$ for every fiber $X$ of $\pi$. Hence $\pi_{*}\mathcal{O}_{\widetilde{\mathbb{P}^{3}}}(dH-Y)=0$. 
  Therefore $\pi_{*}\mathcal{O}_{\widetilde{\mathbb{P}^{3}}}(dH) \cong \pi_{*}\mathcal{O}_{Y}(dH)\cong \mathcal{O}_{L}\otimes W_{d}$, the claim holds in this case.
  Now suppose the claim holds for $(k-1)$. Then
  $$\pi_{*}\mathcal{O}_{\widetilde{\mathbb{P}^{3}}}(dH-Y)\cong \left[\bigoplus_{j=0}^{k-1}(\mathcal{O}_{L}(j)\otimes W_{d-4-4j})\right] \otimes \mathcal{O}_{L}(1) = \bigoplus_{j=1}^{k}(\mathcal{O}_{L}(j)\otimes W_{d-4j}). $$
  Hence $\pi_{*}\mathcal{O}_{\widetilde{\mathbb{P}^{3}}}(dH)$ fits into the following exact sequence on $L$:
  $$0 \longrightarrow  \bigoplus_{j=1}^{k}(\mathcal{O}_{L}(j)\otimes W_{d-4j}) \longrightarrow \pi_{*}\mathcal{O}_{\widetilde{\mathbb{P}^{3}}}(dH) \longrightarrow \mathcal{O}_{L}\otimes W_{d} \longrightarrow 0. $$
  This sequence splits, since
  $$\mathrm{Ext}^{1}_{L}(\mathcal{O}_{L}\otimes W_{d},  \bigoplus_{j=1}^{k}(\mathcal{O}_{L}(j)\otimes W_{d-4j}))=0. $$
  Hence the claim is proved.

  Restricting the sequence (\ref{releval}) to $Y\cong C\times L$, we get
  $$0 \longrightarrow \mathcal{E}_{d}|_{C\times L} \longrightarrow \bigoplus_{j=0}^{k} (\mathcal{O}_{C\times L}(jF)\otimes W_{d-4j}) \overset{ev|_{C\times L}}{\longrightarrow} \mathcal{O}_{C\times L}(dH) \longrightarrow 0. $$
  Recall that $F_{C, d}$ fits into the following exact sequence by construction (\ref{FCd}):
  $$0 \longrightarrow F_{C, d} \longrightarrow \mathcal{O}_{C}\otimes W_{d} \overset{ev_{C}}{\longrightarrow} \mathcal{O}_{C}(dH) \longrightarrow 0.$$
  On the summand $\mathcal{O}_{C\times L}\otimes W_{d}$, the map $ev|_{C\times L}$ is the pullback of $ev_{C}$ along $\nu$. On other summands, $ev|_{C\times L}$ vanishes. Hence we have
  $$\mathcal{E}_{d}|_{Y}\cong \left[\bigoplus_{j=1}^{k} (\mathcal{O}_{Y}(jF)\otimes W_{d-4j})\right] \oplus \nu^{*}F_{C, d}. $$
\end{proof}

Now we prove Theorem \ref{main}.

\begin{proof}[Proof of Theorem \ref{main}]
  Suppose $E$ is an exceptional bundle on $\mathbb{P}^{3}$ with rank $2d^{2}+1$ and degree $-d$. We assume $d\geq 0$, the case where $d<0$ can be proved by taking dual. By Proposition \ref{EXdunique},
  $$\nu^{*}(E)|_{X_{t}}\cong E|_{X_{t}}\cong E_{X_{t}, d}\cong \mathcal{E}_{d}|_{X_{t}}.$$
  Since $E$ is exceptional on $\mathbb{P}^{3}$, $\nu^{*}(E)$ is exceptional on $\widetilde{\mathbb{P}^{3}}$. By Lemma \ref{locally1},
  $\nu^{*}(E)$ is a modification of $\mathcal{E}_{d}$ (Definition \ref{modification}). By Corollary \ref{Edrigid}, $\mathcal{E}_{d}|_{X}$ is simple and rigid for all fibers $X$ of $\pi$. By Lemma \ref{fiberwisesimple}, $\nu^{*}E$ is a trivial modification of $\mathcal{E}_{d}$. In particular, since $\nu^{*}(E)|_{Y}$ is trivial, we must have $\mathcal{E}_{d}|_{Y}\cong \mathcal{O}_{Y}(mF)^{\oplus q}$ for some $m, q\in \mathbb{Z}$. However, by Lemma \ref{restriction}, we have
  $$\mathcal{E}_{d}|_{Y}\cong \left[\bigoplus_{j=1}^{k} (\mathcal{O}_{Y}(jF)\otimes W_{d-4j})\right] \oplus \nu^{*}F_{C, d}. $$
  Recall that $W_{n}= \mathrm{H}^{0}(C, \mathcal{O}_{C}(nH))$. When $d\geq 4$, $W_{d-4j}\neq 0$ for $1\leq j\leq k$. Hence $\mathcal{E}_{d}$ is not of the form $\mathcal{O}_{Y}(mF)^{\oplus q}$, a contradiction.
  
\end{proof}

\section{Another obstruction}\label{section6}

In this section we discuss a different phenomenon from that of Theorem \ref{main} which obstructs the existence of exceptional bundles. The following theorem is a consequence of this different obstruction. 

\begin{theorem}\label{rank27}
On $\mathbb{P}^{3}$, there exists no exceptional bundles of rank $27$ and degree $11$. 
\end{theorem}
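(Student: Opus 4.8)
The plan is to derive a contradiction from the existence of such a bundle by comparing, along a common base curve, the two \emph{unique} spherical bundles attached to two general members of a pencil satisfying Condition \ref{condition1}.

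Suppose $E$ is an exceptional bundle on $\mathbb{P}^{3}$ of rank $r=27$ and degree $d=11$. By Proposition \ref{pencil}, choose a very general pencil $L\subset \mathbb{P}^{34}$ satisfying Condition \ref{condition1}, with base curve $C$, so that $\widetilde{\mathbb{P}^{3}}=\mathrm{BL}_{C}\mathbb{P}^{3}$ and $Y$ is the exceptional divisor of $\nu$. Since $L$ meets the Noether--Lefschetz locus in at most countably many points, I can pick two distinct smooth members $X=X_{t}$ and $X'=X_{t'}$ of Picard rank $1$; as they lie in a pencil, $X\cap X'=C$. The Mukai vector of $E|_{X}$ is $v=(27,11H,s)$ with $s=(2d^{2}+1)/r=243/27=9$ — this is precisely where Kulikov's divisibility $r\mid (2d^{2}+1)$ enters, since $s\in\mathbb{Z}$ forces it. By Lemma \ref{stablespherical} both $E|_{X}$ and $E|_{X'}$ are spherical of Mukai vector $v$, and $v$ is primitive, so by Lemma \ref{sphericalunique} they coincide with the unique spherical bundles $E_{X},E_{X'}$ of that Mukai vector on $X,X'$. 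Restricting the global bundle $E$ to $C$ in two ways yields the crucial identity $E_{X}|_{C}\cong E|_{C}\cong E_{X'}|_{C}$, and the whole theorem reduces to contradicting it.

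To do so I would first give an explicit model for $E_{X}$. Since, for instance, $\mathrm{ch}(E_{X})=6\,\mathrm{ch}(\mathcal{O}_{X})+31\,\mathrm{ch}(\mathcal{O}_{X}(H))-10\,\mathrm{ch}(\mathcal{O}_{X}(2H))$, I expect $E_{X}$ to be realized as the cohomology of a short monad (or an iterated mutation) built from $\mathcal{O}_{X},\mathcal{O}_{X}(H),\mathcal{O}_{X}(2H)$, whose simplicity and rigidity follow from a computation in the spirit of Proposition \ref{derivedmutation}. Assembling the $E_{X_{u}}$ over the Picard-rank-$1$ locus into a relative spherical bundle $\mathcal{E}$ on $\widetilde{\mathbb{P}^{3}}$ — in parallel with the construction of $\mathcal{E}_{d}$ in Section \ref{section5} — I would then compute its restriction to $Y\cong C\times L$, imitating Lemma \ref{restriction}. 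Because spherical bundles are fiberwise simple (Lemma \ref{simplerigid}), if $E$ existed then $\nu^{*}E$ would be a \emph{trivial} modification of $\mathcal{E}$ by Lemmas \ref{locally1} and \ref{fiberwisesimple}, forcing $\mathcal{E}|_{Y}$ to have the external-product form $p_{C}^{*}(E|_{C})\otimes \pi^{*}\mathcal{L}$; such a sheaf restricts to the \emph{same} bundle on every slice $C\times\{u\}$. Thus it suffices to exhibit one invariant of $\mathcal{E}|_{C\times\{u\}}$ — a distinguished subsheaf, a Harder--Narasimhan or Jordan--Hölder factor on $C$, or the associated class in $\mathrm{Pic}(C)$ — that is non-constant in $u$.

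The main obstacle is exactly this last step: proving that the family $\{E_{X_{u}}|_{C}\}_{u\in L}$ is not isotrivial. This is a genuinely different phenomenon from Theorem \ref{main}, where descent failed only because the relative Harder--Narasimhan jumps produced \emph{distinct $L$-twists} along the pencil; here such twists alone do not obstruct descent, and one must instead track how the actual moduli point of $E_{X_{u}}|_{C}$ in the moduli of rank-$27$ bundles on the genus-$33$ curve $C$ moves. I would attack this either by a Kodaira--Spencer computation for the family $\mathcal{E}|_{C\times L}$, or by pinning down an explicit sub- or quotient-line bundle of $E_{X_{u}}|_{C}$ whose class is tied to the geometry of the pair $(X_{u},C)$ and changes with $u$; the delicate point is to show this variation is genuine and not absorbed by automorphisms of $E_{X_{u}}|_{C}$ or by a global twist. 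Once non-isotriviality is established, $E_{X}|_{C}\cong E_{X'}|_{C}$ fails for general $t\neq t'$, contradicting the existence of $E$ and proving the theorem.
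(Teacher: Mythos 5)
Your reduction is exactly the paper's opening move: pick two distinct Picard-rank-$1$ members $X_{t_1},X_{t_2}$ of the pencil, identify $E|_{X_{t_i}}$ with the unique spherical bundle $E_{X_{t_i}}$ of Mukai vector $(27,11H,9)$ via Lemmas \ref{stablespherical} and \ref{sphericalunique}, and observe that an exceptional $E$ would force $E_{X_{t_1}}|_{C}\cong E|_{C}\cong E_{X_{t_2}}|_{C}$. (Your intermediate machinery --- assembling a relative bundle $\mathcal{E}$ over the Picard-rank-$1$ locus and invoking Lemmas \ref{locally1} and \ref{fiberwisesimple} --- is not needed for this reduction, and its existence as a relative bundle over all of $L$ would itself require justification; the two restrictions to $C$ already agree simply because both equal $E|_{C}$.) But the proposal stops where the actual work begins. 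The entire content of the theorem is Phenomenon \ref{deform} (Corollary \ref{nonisomorphic}): $E_{X_t}|_{C}\not\cong E_{X_{t'}}|_{C}$ for distinct $t,t'$. You correctly identify this non-isotriviality as ``the main obstacle,'' but you leave it conditional (``once non-isotriviality is established\ldots''), offering only unexecuted strategies --- a Kodaira--Spencer computation or a varying sub-line bundle. Neither sketch is developed far enough to check that it would succeed; in particular, either one would still have to contend with precisely the difficulty you flag yourself, namely distinguishing genuine variation of the isomorphism class from variation absorbed by automorphisms of $E_{X_u}|_{C}$ or by twists, and you provide no mechanism for doing so. This is a genuine gap, not a routine verification.

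For contrast, the mechanism the paper builds is the following, and it is what is missing from your proposal. Every $E_X$ (for \emph{all} $X\in L$, not only Picard rank $1$) is realized as the unique non-split extension $0\to F|_{X}\to E_{X}\to\mathcal{O}_{X}(H)\to 0$, where $F$ is a single bundle defined on all of $\mathbb{P}^{3}$, the cokernel of $\mathcal{O}_{\mathbb{P}^3}\otimes V_1\to T_{\mathbb{P}^3}(-H)\otimes V_2$, and $\mathrm{Ext}^1_X(\mathcal{O}_X(H),F|_X)=\mathbb{C}$ (Lemma \ref{uniqueextension}). Because $F$ is global, $F|_{C}$ is one fixed bundle on $C$, so all the $E_{X_t}|_{C}$ are extensions of the same pair $(\mathcal{O}_C(H),F|_C)$; simplicity of $F|_{C}$ (Proposition \ref{FC}) together with $\mathrm{Hom}_C(F|_C,E_{X}|_C)=\mathbb{C}$ (Proposition \ref{uniqueinclusion}) forces isomorphic such extensions to have proportional extension classes (Lemma \ref{scalar}); and Proposition \ref{nonconstant} shows the line $\mathrm{im}(\mathrm{res}_t)\subset\mathrm{Ext}^1_C(\mathcal{O}_C(H),F|_C)$ actually moves with $t$, because the relative restriction map is a nowhere-vanishing map of sheaves $\mathcal{O}_L(-1)\to\mathrm{Ext}^1_C(\mathcal{O}_C(H),F|_C)\otimes\mathcal{O}_L$ and hence cannot have constant image. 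That moving line is exactly the invariant ``not absorbed by automorphisms or by a global twist'' that your proposal needs but does not construct; without it, or a worked-out substitute, the argument is incomplete.
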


In this section, for any smooth quartic surface $X\subset \mathbb{P}^{3}$ with Picard rank 1, we fix the Mukai vector $v=(27, 11H, 9)\in  \mathrm{H}^{*}_{alg}(X)$ and let $E_{X}\in M_{X, H}(v)$. Suppose $E$ is an exceptional bundle with rank $27$ and degree $11$. Then for such an $X$ we have $E|_{X}\cong E_{X}$ \cite{Zub90}. Let $\{X_{t}\subset \mathbb{P}^{3}\}_{t\in L}, L\subset \mathbb{P}^{34}$ be a general pencil of quartic surfaces and $C\subset \mathbb{P}^{3}$ be its base curve. If such an exceptional bundle $E$ exists, then
$$E_{X_{t}}|_{C} \cong E|_{C} \cong E_{X_{t'}}|_{C}$$
for any $t, t'\in L$. However, in fact this is not true: the following phenomenon obstructs the existence of exceptional bundles of rank $27$ and degree $11$.
\begin{phenomenon}[Corollary \ref{nonisomorphic}]\label{deform}
  For general $t, t'\in L$, $E_{X_{t}}|_{C}\not\cong E_{X_{t'}}|_{C}$.
  \end{phenomenon}

Note that Phenomenon \ref{deform} is different from the the reason for Theorem \ref{main}. In Theorem \ref{main}, $E_{X_{t}}|_{C}\cong E_{X_{t'}}|_{C}$ for any $t, t'\in L$. The difficulty there is to rule out modifications (Definition \ref{modification}) of $E_{X_{t}}|_{C}$, since they are not simple. However, in the case of rank $27$ and degree $11$, we have $E_{X_{t}}|_{C}$ is simple for general $t\in L$. The difficulty here is to show $E_{X_{t}}|_{C}$ and $E_{X_{t'}}|_{C}$ are non-isomorphic for general $t, t'\in L$. This also shows that the condition $\mathrm{Ext}^{1}_{X}(E, E(4H))=0$ in \cite{Pol11} is not a sufficient condition for the existence of exceptional bundles.

We first describe $E_{X}$ for a smooth quartic surface $X$ with Picard rank 1. By the algorithm in \cite{Liu22}, $E_{X}$ can be constructed as follows. Let $M=T_{\mathbb{P}^{3}}(-H)$. Let $N_{X}\in \mathcal{D}^{b}(X)$ fits into the following exact triangle
$$ \mathcal{O}_{X}[1]\otimes  \mathrm{H}^{0}(X, \mathcal{O}_{X}(H)) \longrightarrow N_{X} \longrightarrow \mathcal{O}_{X}(H),$$
where the extension is described below. Note that $\mathrm{Ext}_{X}^{1}(\mathcal{O}_{X}(H),\mathcal{O}_{X}[1])\cong  \mathrm{H}^{0}(X, \mathcal{O}_{X}(H))^{*}$. Hence we may take the coevaluation map $\mathcal{O}_{X}(H) \overset{coev}{\longrightarrow} \mathcal{O}_{X}[2]\otimes  \mathrm{H}^{0}(X, \mathcal{O}_{X}(H))$ and let $N_{X}$ be the cone. Then $E_{X}$ can be constructed as the canonical extension that fits into the following exact triangle
\begin{equation}\label{construction1}
  M|_{X}\otimes \mathrm{Ext}_{X}^{1}(N_{X},M|_{X})^{*} \longrightarrow E_{X} \longrightarrow N_{X}.
  \end{equation}
We have a canonical identification $\mathrm{Ext}^{1}(N_{X}, M|_{X})^{*}\cong \mathrm{H}^{0}(X, \mathcal{O}_{X}(2H))$.
For simplicity, we set the following notation.
\begin{notation}\label{Vd}
  For any quartic surface $X\subset \mathbb{P}^{3}$ and $d\in \mathbb{Z}$, let $V_{X, d}:= \mathrm{H}^{0}(X, \mathcal{O}_{X}(dH))$. For $d\leq 3$, they are fixed quotients of $ \mathrm{H}^{0}(\mathbb{P}^{3}, \mathcal{O}_{\mathbb{P}^{3}}(dH))$, we denote them by $V_{d}$ in this case.
\end{notation}

Using Notation \ref{Vd}, the long exact sequence of cohomology sheaves of the sequence (\ref{construction1}) is
\begin{equation}\label{construction2}
  0 \longrightarrow \mathcal{O}_{X}\otimes V_{1} \overset{f}{\longrightarrow} M|_{X} \otimes V_{2} \longrightarrow E_{X} \longrightarrow \mathcal{O}_{X}(H) \longrightarrow 0.
\end{equation}
The map $f$ is given by the composition of the following maps
$$\mathcal{O}_{X}\otimes V_{1} \overset{coev\otimes \mathrm{id}}{\longrightarrow} M|_{X}\otimes V_{1}\otimes V_{1} \overset{\mathrm{id}\otimes m}{\longrightarrow} M\otimes V_{2}, $$
where $m: V_{1}\otimes V_{1} \longrightarrow V_{2}$ is the multiplication map. Note that using the same description, $f$ can be defined on $\mathbb{P}^{3}$. We let $F$ be the cokernel:
\begin{equation}\label{FF}
  0 \longrightarrow \mathcal{O}_{\mathbb{P}^{3}}\otimes V_{1} \overset{f}{\longrightarrow} M \otimes V_{2} \longrightarrow F \longrightarrow 0.
  \end{equation}
Note that $\mathrm{Ext}^{1}_{\mathbb{P}^{3}}(\mathcal{O}_{\mathbb{P}^{3}}(H), F)=0$, hence the same construction cannot produce a simple bundle on $\mathbb{P}^{3}$. However, on each quartic $X$ there are non-trivial extensions.

\begin{lemma}\label{uniqueextension}
We have $\mathrm{Ext}_{X}^{1}(\mathcal{O}_{X}(H), F|_{X})=\mathbb{C}$ for \emph{all} $X\in L$. 
\end{lemma}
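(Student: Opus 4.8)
The plan is to compute $\mathrm{Ext}^1_X(\mathcal{O}_X(H), F|_X)$ directly by restricting the defining sequence (\ref{FF}) to $X$ and applying $\mathrm{Hom}_X(\mathcal{O}_X(H), -)$, then chasing the resulting long exact sequence. The key structural fact I want to exploit is that $F$ was built on all of $\mathbb{P}^3$ so that $\mathrm{Ext}^1_{\mathbb{P}^3}(\mathcal{O}_{\mathbb{P}^3}(H), F) = 0$; the nontrivial extension class on $X$ must therefore come from a coboundary term that only appears after restriction, namely from $\mathrm{Ext}^2$ on $\mathbb{P}^3$ via Serre duality / the restriction sequence for the divisor $X \in |4H|$.

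Let me describe the steps. First I would restrict (\ref{FF}) to $X$ — since all terms are locally free, restriction stays exact — giving $0 \to \mathcal{O}_X \otimes V_1 \to M|_X \otimes V_2 \to F|_X \to 0$. Applying $\mathrm{Hom}_X(\mathcal{O}_X(H), -)$ produces a long exact sequence relating $\mathrm{Ext}^1_X(\mathcal{O}_X(H), F|_X)$ to $\mathrm{Ext}^\bullet_X(\mathcal{O}_X(H), \mathcal{O}_X \otimes V_1)$ and $\mathrm{Ext}^\bullet_X(\mathcal{O}_X(H), M|_X \otimes V_2)$. The first piece involves $\mathrm{H}^\bullet(\mathcal{O}_X(-H)) \otimes V_1$, which by Kodaira vanishing and Serre duality on the $K3$ surface $X$ has only $\mathrm{H}^2(\mathcal{O}_X(-H)) \cong \mathrm{H}^0(\mathcal{O}_X(H))^* = V_1^*$ nonzero. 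The second piece involves $\mathrm{H}^\bullet(X, M|_X(-H)) \otimes V_2 = \mathrm{H}^\bullet(X, T_{\mathbb{P}^3}(-2H)|_X) \otimes V_2$, whose cohomology I would compute from the Euler sequence $0 \to \mathcal{O}_X(-2H) \to \mathcal{O}_X(-H)^{\oplus 4} \to T_{\mathbb{P}^3}(-2H)|_X \to 0$ together with Kodaira vanishing on $X$. The goal is to show every relevant $\mathrm{Ext}^0$ and $\mathrm{Ext}^2$ contribution cancels, pinning $\mathrm{Ext}^1_X(\mathcal{O}_X(H), F|_X)$ down to a single one-dimensional surviving term.

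The cleanest way to isolate that one dimension is probably to read off $\chi$ and then control $\mathrm{Hom}$ and $\mathrm{Ext}^2$ separately. I would compute $\chi(\mathcal{O}_X(H), F|_X)$ via Riemann–Roch on $X$ using the Mukai vectors, getting a fixed integer independent of which $X \in L$ we pick (this already gives the ``for all $X$'' uniformity for free, since the Chern character of $\mathcal{O}_X(H)$, $M|_X$, and hence $F|_X$ is locally constant in the family). Then $\mathrm{Ext}^0_X(\mathcal{O}_X(H), F|_X) = \mathrm{Hom}_X(\mathcal{O}_X(H), F|_X)$ should vanish because $F|_X$ is a quotient of $M|_X \otimes V_2$ with no global sections in the relevant twist — I would verify $\mathrm{Hom}(\mathcal{O}_X(H), M|_X \otimes V_2) = \mathrm{H}^0(T_{\mathbb{P}^3}(-2H)|_X) \otimes V_2 = 0$ — and $\mathrm{Ext}^2_X(\mathcal{O}_X(H), F|_X) \cong \mathrm{Hom}_X(F|_X, \mathcal{O}_X(H))^*$ should vanish by a parallel argument, since $F$ is globally generated-ish and $\mathcal{O}_X(H)$ is ``smaller.'' Once both vanish, $\mathrm{ext}^1 = -\chi$, and the point is to check this equals $1$.

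**The hard part will be** the $\mathrm{Ext}^2$ vanishing, i.e. showing $\mathrm{Hom}_X(F|_X, \mathcal{O}_X(H)) = 0$. Unlike the $\mathrm{Hom}^0$ computation, which reduces to cohomology of explicit line-bundle twists of $T_{\mathbb{P}^3}$ that all vanish by Kodaira, a map $F|_X \to \mathcal{O}_X(H)$ need not come from a map out of $M|_X \otimes V_2$ in an obvious way, so I cannot simply dualize the Euler-sequence computation. I expect to need the precise geometry of the map $f$ (the coevaluation-times-multiplication composite) — in particular that $f$ is injective as a bundle map on $X$ so that $F|_X$ is locally free with the expected Chern character, and that its construction as a mutation from the exceptional collection forces $F|_X$ to be slope-semistable with slope exceeding $\mu(\mathcal{O}_X(H))$, ruling out a nonzero quotient map to $\mathcal{O}_X(H)$. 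If semistability is awkward to invoke here, the fallback is to dualize sequence (\ref{FF}), restrict, and directly show $\mathrm{Hom}(F|_X, \mathcal{O}_X(H))$ injects into a cohomology group that vanishes by the same $T_{\mathbb{P}^3}$-twist vanishing used above.
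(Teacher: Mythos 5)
Your setup coincides with the paper's: restrict (\ref{FF}) to $X$, apply $\mathrm{Hom}_X(\mathcal{O}_X(H),-)$, and compute the terms from the Euler sequence together with Kodaira/Serre vanishing on the $K3$ surface $X$. But your closing strategy --- prove $\mathrm{Hom}_X(\mathcal{O}_X(H),F|_X)=0$ \emph{and} $\mathrm{Ext}^2_X(\mathcal{O}_X(H),F|_X)=0$, then read off $\mathrm{ext}^1=-\chi$ --- cannot be carried out, because the $\mathrm{Ext}^2$ vanishing you flag as ``the hard part'' is false, not merely hard. Compute $\chi$ from the defining sequence:
$$\chi(\mathcal{O}_X(H),F|_X)=10\,\chi\bigl(X,T_{\mathbb{P}^3}(-2H)|_X\bigr)-4\,\chi\bigl(X,\mathcal{O}_X(-H)\bigr)=10\cdot 6-4\cdot 4=44>0.$$
Since $\mathrm{Hom}_X(\mathcal{O}_X(H),F|_X)=0$ does hold, and the lemma asserts $\mathrm{ext}^1=1$, one must have $\mathrm{ext}^2=\chi+\mathrm{ext}^1=45$: that is, $\mathrm{Hom}_X(F|_X,\mathcal{O}_X(H))$ is $45$-dimensional, not zero. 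Your stability heuristic points the wrong way: $\mu(F|_X)=\tfrac{10H\cdot H}{26}=\tfrac{40}{26}<4=\mu(\mathcal{O}_X(H))$, so slope considerations permit, rather than forbid, nonzero maps $F|_X\to\mathcal{O}_X(H)$; the fallback of injecting $\mathrm{Hom}(F|_X,\mathcal{O}_X(H))$ into a vanishing cohomology group is doomed for the same reason. Had both vanishings held, you would conclude $\mathrm{ext}^1=-\chi=-44$, an absurdity which already shows the plan is structurally broken.

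The repair is that no $\mathrm{Ext}^2$ vanishing is needed. The long exact sequence gives
$$0=\mathrm{Ext}^1_X(\mathcal{O}_X(H),M|_X\otimes V_2)\longrightarrow\mathrm{Ext}^1_X(\mathcal{O}_X(H),F|_X)\longrightarrow V_1^*\otimes V_1\overset{g}{\longrightarrow}(\wedge^2V_1)^*\otimes V_2,$$
where $V_1^*\otimes V_1=\mathrm{Ext}^2_X(\mathcal{O}_X(H),\mathcal{O}_X\otimes V_1)$ and $(\wedge^2V_1)^*\otimes V_2=\mathrm{Ext}^2_X(\mathcal{O}_X(H),M|_X\otimes V_2)$, both computed exactly as in your second paragraph; hence $\mathrm{Ext}^1_X(\mathcal{O}_X(H),F|_X)\cong\ker(g)$, and the cokernel of $g$ (your nonvanishing $\mathrm{Ext}^2$) is irrelevant. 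The real content is the explicit description of the coboundary $g$ induced by $f$: writing an element of $V_1^*\otimes V_1$ as $\sum_{i,k}a_{ik}\,x_i^*\otimes x_k$, its image is the collection of quadrics $\sum_k(a_{ik}x_jx_k-a_{jk}x_ix_k)$ indexed by $x_i^*\wedge x_j^*$, and a coordinate computation (using that $\mathrm{Sym}^2V_1\to V_2$ is an isomorphism for \emph{every} quartic, smooth or not, which gives the uniformity in $X\in L$) shows $\ker(g)=\mathbb{C}\cdot\sum_i x_i^*\otimes x_i$. This is the paper's argument; your first paragraph's intuition that the class lives in a coboundary term absent on $\mathbb{P}^3$ is correct, but the dimension count must go through $\ker(g)$, not through $\chi$ and vanishing theorems.
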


\begin{proof}
  Applying $\mathrm{Hom}_{X}(\mathcal{O}_{X}(H),-)$ to sequence (\ref{FF}), we get the following exact sequence
  \begin{multline*}
    \mathrm{Ext}_{X}^{1}(\mathcal{O}_{X}(H), M_{X}\otimes V_{2}) \longrightarrow \mathrm{Ext}_{X}^{1}(\mathcal{O}_{X}(H), F|_{X})\\
    \longrightarrow \mathrm{Ext}_{X}^{2}(\mathcal{O}_{X}(H), \mathcal{O}_{X}\otimes V_{1}) \overset{g}{\longrightarrow} \mathrm{Ext}_{X}^{2}(\mathcal{O}_{X}(H), M|_{X}\otimes V_{2}).
    \end{multline*}
  Computing each term, we have
  \begin{eqnarray*}
 &  & \mathrm{Ext}_{X}^{1}( \mathcal{O}_{X}(H), M|_{X}\otimes V_{2})=0, \\
  &  &\mathrm{Ext}_{X}^{2}(\mathcal{O}_{X}(H), \mathcal{O}_{X}\otimes V_{1})=V_{1}^{*}\otimes V_{1}, \\
& &    \mathrm{Ext}_{X}^{2}(\mathcal{O}_{X}(H), M|_{X}\otimes V_{2})=(\wedge^{2}V_{1})^{*}\otimes V_{2}.
  \end{eqnarray*}
  The map $g$ can be explicitly described as follows. Let $g_{1}: V_{1}^{*} \longrightarrow (\wedge^{2}V_{1})^{*}\otimes V_{1}$ be the adjoint map of the canonical projection $V_{1}^{*}\otimes V_{1}^{*} \longrightarrow (\wedge^{2}V_{1})^{*}$. Then $g$ is the composition of the following maps:
  $$V_{1}^{*}\otimes V_{1} \overset{g_{1}\otimes \mathrm{id}}{\longrightarrow} (\wedge^{2}V_{1})^{*}\otimes V_{1}\otimes V_{1} \overset{\mathrm{id}\otimes m}{\longrightarrow} (\wedge^{2}V_{1})\otimes V_{2}. $$
  Let $x_{0}, \cdots, x_{3}$ be coordinates on $\mathbb{P}^{3}$, then by explicit computations we have
  $$\mathrm{Ext}^{1}(\mathcal{O}_{X}(H), F|_{X})\cong \mathrm{ker}(g)=\mathbb{C}\delta_{1}:=\mathbb{C}\left(\sum_{i=0}^{3}x_{i}^{*}\otimes x_{i}\right)\in V_{1}^{*}\otimes V_{1}. $$
\end{proof}

We define the vector bundle $E_{X}$ on \emph{any} $X\in L$ to be the unique non-split extension
\begin{equation}\label{EX}
  0 \longrightarrow F|_{X} \longrightarrow E_{X} \longrightarrow \mathcal{O}_{X}(H) \longrightarrow 0.
  \end{equation}

The author wonders but does not know whether $F|_{C}$ is stable. However the following proposition is sufficient for our purpose.
\begin{proposition}\label{FC}
The bundle $F|_{C}$ is simple.
\end{proposition}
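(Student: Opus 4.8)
The plan is to show that $\mathrm{Hom}_C(F|_C,F|_C)=H^0(C,\underline{End}(F|_C))$ is one–dimensional by transferring the computation to $\mathbb{P}^3$, where $F$ is built from the exceptional bundles $\mathcal{O}_{\mathbb{P}^3}$ and $M=T_{\mathbb{P}^3}(-H)$ via (\ref{FF}). Since $C$ is the complete intersection of the two quartics spanning $L$, I would tensor the Koszul resolution
\begin{equation*}
0\longrightarrow \mathcal{O}_{\mathbb{P}^3}(-8H)\longrightarrow \mathcal{O}_{\mathbb{P}^3}(-4H)^{\oplus 2}\longrightarrow \mathcal{O}_{\mathbb{P}^3}\longrightarrow \mathcal{O}_C\longrightarrow 0
\end{equation*}
by $\underline{End}(F)=F^{\vee}\otimes F$ and split it into $0\to \underline{End}(F)(-8H)\to \underline{End}(F)(-4H)^{\oplus2}\to Q\to 0$ and $0\to Q\to \underline{End}(F)\to \underline{End}(F)|_C\to 0$. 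The restriction map $H^0(\underline{End}(F))\to H^0(\underline{End}(F)|_C)$ then has kernel $H^0(Q)$ and cokernel injecting into $H^1(Q)$, so it is an isomorphism once $H^0(Q)=H^1(Q)=0$.

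It therefore suffices to prove that (a) $F$ is simple on $\mathbb{P}^3$, and (b) $H^0(\underline{End}(F)(-4H))=H^1(\underline{End}(F)(-4H))=H^1(\underline{End}(F)(-8H))=H^2(\underline{End}(F)(-8H))=0$; indeed (b) yields $H^0(Q)=H^1(Q)=0$ through the first short exact sequence. Part (b) is the routine part: dualizing (\ref{FF}) gives $0\to F^{\vee}\to M^{\vee}\otimes V_2^{*}\to \mathcal{O}\otimes V_1^{*}\to 0$, so $\underline{End}(F)$ is filtered by tensor products of $\mathcal{O}$, $M$ and $M^{\vee}=\Omega^1_{\mathbb{P}^3}(H)$. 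Twisting by $-4H$ and $-8H$ and feeding each graded piece into Bott's formula reduces (b) to a finite, mechanical list of cohomology computations on $\mathbb{P}^3$.

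The crux, which I expect to be the main obstacle, is (a). Applying $\mathrm{Hom}_{\mathbb{P}^3}(M,-)$ to (\ref{FF}) and using $H^0(M^{\vee})=H^1(M^{\vee})=0$ together with $\mathrm{Hom}(M,M)=\mathbb{C}$ gives $\mathrm{Hom}(M,F)=V_2$, i.e. every map $M\to F$ is tautological. Applying $\mathrm{Hom}_{\mathbb{P}^3}(-,F)$ to (\ref{FF}) then identifies $\mathrm{Hom}(F,F)$ with the kernel of the map
\begin{equation*}
\mathrm{End}(V_2)=V_2^{*}\otimes \mathrm{Hom}(M,F)\longrightarrow V_1^{*}\otimes H^0(F),\qquad A\longmapsto A\circ f,
\end{equation*}
so that simplicity of $F$ is equivalent to the statement that the only $A\in\mathrm{End}(V_2)$ with $A\circ f=0$ are the scalars. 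Unwinding $f$ through the coevaluation $\mathcal{O}\to M\otimes V_1$ and the multiplication $m\colon V_1\otimes V_1\to V_2$, and using the single Euler relation among the four sections of $M$, this is a concrete linear–algebra computation in the coordinates $x_0,\dots,x_3$, entirely parallel to the determination of $\ker(g)=\mathbb{C}\delta_1$ carried out in Lemma \ref{uniqueextension}; that is where the real work lies.

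Alternatively, one can run the same Hom-computations directly on $C$ rather than on $\mathbb{P}^3$: the analogues of $H^0(M^{\vee})=0$ and $\mathrm{Hom}(M,M)=\mathbb{C}$ are $H^0(M^{\vee}|_C)=0$ and the simplicity of $M|_C$, the latter holding because $M=T_{\mathbb{P}^3}(-H)$ is $\mu$-stable and its restriction to the general complete intersection curve $C$ stays stable by Mehta--Ramanathan. Either route bottoms out in the same linear-algebra identity for $m$, but the Koszul reduction has the advantage that every auxiliary vanishing is governed by Bott's formula rather than by cohomology of the genus $33$ curve $C$.
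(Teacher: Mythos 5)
Your Koszul skeleton is sound, and it is essentially the paper's own strategy repackaged: the paper likewise reduces simplicity of $F|_{C}$ to simplicity of $F$ on $\mathbb{P}^{3}$ together with cohomological vanishings, except that it restricts in two steps, $\mathbb{P}^{3}\to X\to C$ (Lemma \ref{FP3}, Lemma \ref{FX}, then Proposition \ref{FC}), each step requiring an $\mathrm{Ext}^{1}(\,\cdot\,,\,\cdot\,(-4H))=0$ statement, rather than in one step via the Koszul resolution of $C$. Your part (a) is exactly Lemma \ref{FP3}, down to the identification of $\mathrm{Hom}(F,F)$ with the kernel of composition with $f$ and the coordinate computation identifying that kernel with $\mathbb{C}\sum_{i}x_{i}^{*}\otimes x_{i}$.

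The genuine gap is your claim that part (b) is the "routine part'' dispatched by Bott's formula. Three of the four vanishings are indeed of that type ($H^{0}$ and $H^{1}$ of $\underline{End}(F)(-4H)$, and $H^{1}$ of $\underline{End}(F)(-8H)$), but $H^{2}(\underline{End}(F)(-8H))=0$ is not. Tensoring (\ref{FF}) with $F^{*}(-8H)$ and using the Bott-type vanishing $H^{2}(F^{*}\otimes M(-8H))=0$, one gets
\begin{equation*}
H^{2}(\underline{End}(F)(-8H))\;\cong\;\ker\Bigl(\partial: H^{3}(F^{*}(-8H))\otimes V_{1}\longrightarrow H^{3}(F^{*}\otimes M(-8H))\otimes V_{2}\Bigr),
\end{equation*}
and by Serre duality $H^{3}(F^{*}(-8H))\cong H^{0}(F(4H))^{*}$ is enormous (of dimension $1060$), as is the target. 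So no filtration-plus-Bott argument can conclude: you must prove injectivity of the connecting map $\partial$, i.e.\ (dualizing) surjectivity of a composition-with-$f$ map $\mathrm{Hom}(M,F(4H))\otimes V_{2}^{*}\to H^{0}(F(4H))\otimes V_{1}^{*}$, equivalently $\mathrm{Ext}^{1}_{\mathbb{P}^{3}}(F,F(4H))=0$. This is precisely the hard step of the paper's proof: since $\mathrm{Ext}^{2}_{X}(F,\mathcal{O}_{X}(-4H))\cong H^{0}(X,F|_{X}(4H))^{*}\neq 0$, the same obstruction appears on $X$, and the paper handles it by proving the connecting map $u$ injective, which it reduces to surjectivity of the explicit map $v$ and settles by a coordinate computation. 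Even the weaker statement you actually need, $H^{1}(Q)=0$, unwinds through the Koszul sequence to injectivity of multiplication by a quartic equation $H^{2}(\underline{End}(F)(-8H))\to H^{2}(\underline{End}(F)(-4H))$, which is literally equivalent to the paper's $\mathrm{Ext}^{1}_{X}(F|_{X},F|_{X}(-4H))=0$. So you have misplaced the crux: part (a) is the manageable half, and the real difficulty of the proposition is hidden inside your "(b)''; as written, your proposal does not prove it. (A peripheral issue: your Mehta--Ramanathan aside would also need an effective restriction theorem to apply at multidegree $(4,4)$.)
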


We will show Proposition \ref{FC} by proving the following two lemmas.

\begin{lemma}\label{FP3}
The bundle $F$ is simple on $\mathbb{P}^{3}$. 
\end{lemma}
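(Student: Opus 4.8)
The plan is to compute $\mathrm{Hom}_{\mathbb{P}^3}(F,F)$ directly from the presentation (\ref{FF}) and show it is one-dimensional. Applying $\mathrm{Hom}_{\mathbb{P}^3}(-,F)$ to (\ref{FF}) gives an exact sequence
$$0 \longrightarrow \mathrm{Hom}(F,F) \longrightarrow \mathrm{Hom}(M,F)\otimes V_2^{*} \overset{\phi}{\longrightarrow} H^0(F)\otimes V_1^{*},$$
where $\phi$ is precomposition with $f$. Thus it suffices to identify $\mathrm{Hom}(M,F)$ and to show that $\ker\phi$ is spanned by the identity.

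First I would pin down $\mathrm{Hom}(M,F)=H^0(F\otimes M^{*})$. Tensoring (\ref{FF}) by $M^{*}$ and taking cohomology, the relevant inputs are the standard facts on $\mathbb{P}^3$ that $M=T_{\mathbb{P}^3}(-H)$ is simple, so $\mathrm{Hom}(M,M)=\mathbb{C}$, and that $H^0(M^{*})=H^1(M^{*})=H^0(\Omega^1_{\mathbb{P}^3}(H))=H^1(\Omega^1_{\mathbb{P}^3}(H))=0$, all immediate from the (dual) Euler sequence. These give a canonical isomorphism $\mathrm{Hom}(M,F)\cong V_2$, where $w\in V_2$ corresponds to $M\cong M\otimes\langle w\rangle \hookrightarrow M\otimes V_2 \twoheadrightarrow F$. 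Under this identification $\mathrm{Hom}(M,F)\otimes V_2^{*}\cong \mathrm{End}(V_2)$, and an endomorphism of $F$ becomes an operator $A\in\mathrm{End}(V_2)$ satisfying $\pi\circ(\mathrm{id}_M\otimes A)\circ f=0$, where $\pi\colon M\otimes V_2\to F$ is the quotient map.

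Since $f$ is injective with image $\cong \mathcal{O}_{\mathbb{P}^3}\otimes V_1$, this vanishing is equivalent to $(\mathrm{id}_M\otimes A)\circ f = f\circ B$ for a unique $B\in\mathrm{End}(V_1)$. Now I would unwind this relation using the explicit shape $f=(\mathrm{id}_M\otimes m)\circ(\mathrm{coev}\otimes\mathrm{id}_{V_1})$. Writing $V_1$ in coordinates $x_0,\dots,x_3$ and using $V_2=\mathrm{Sym}^2 V_1$ with $m$ the multiplication, the coevaluation expresses $f$ through the canonical element of $H^0(M)\otimes V_1=V_1^{*}\otimes V_1$; matching coefficients along a basis of $H^0(M)=V_1^{*}$ turns the relation $(\mathrm{id}_M\otimes A)\circ f = f\circ B$ into the system
$$A(x_i x_j) = x_i\, B(x_j)\qquad\text{for all } i,j.$$

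The heart of the argument, and the step I expect to be the main obstacle, is the linear algebra extracted from this system. Symmetry of $x_ix_j$ forces $x_iB(x_j)=x_jB(x_i)$ in $\mathrm{Sym}^2 V_1$; comparing the coefficient of $x_i^2$ for $i\ne j$ shows $B$ is diagonal, and then equality of the remaining coefficients forces all diagonal entries equal, so $B=\lambda\,\mathrm{id}_{V_1}$. Feeding this back gives $A(x_ix_j)=\lambda x_i x_j$, and since the products $x_ix_j$ span $V_2$ we conclude $A=\lambda\,\mathrm{id}_{V_2}$. Hence $\ker\phi=\mathbb{C}\cdot\mathrm{id}$ and $F$ is simple. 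The only delicate points are verifying the cohomology vanishings that yield $\mathrm{Hom}(M,F)\cong V_2$ and making the passage from $f$ to the coordinate system precise, but both become routine once the Euler sequence is invoked.
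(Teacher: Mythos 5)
Your proof is correct, and it is essentially the mirror image of the paper's argument, run in the opposite variance. The paper applies $\mathrm{Hom}_{\mathbb{P}^3}(F,-)$ to (\ref{FF}), so that $\mathrm{Hom}(F,F)$ is realized as the kernel of a connecting map $r\colon \mathrm{Ext}^1(F,\mathcal{O}_{\mathbb{P}^3})\otimes V_1\to \mathrm{Ext}^1(F,M)\otimes V_2$; this forces it to first establish $\mathrm{Hom}(F,M\otimes V_2)=0$ and to identify the groups $\mathrm{Ext}^1(F,\mathcal{O}_{\mathbb{P}^3})$ and $\mathrm{Ext}^1(F,M)$ by applying $\mathrm{Hom}(-,\mathcal{O}_{\mathbb{P}^3})$ and $\mathrm{Hom}(-,M)$ to (\ref{FF}), and it then concludes with a coordinate computation of $\ker(r)$ that is asserted rather than written out, the kernel being $\mathbb{C}(\sum_i x_i^*\otimes x_i)$. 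You instead apply $\mathrm{Hom}(-,F)$, which keeps the whole computation inside $\mathrm{Hom}$-groups: your only cohomological inputs are $H^0(\Omega^1_{\mathbb{P}^3}(H))=H^1(\Omega^1_{\mathbb{P}^3}(H))=0$ and the simplicity of $M=T_{\mathbb{P}^3}(-H)$, which give $\mathrm{Hom}(M,F)\cong V_2$; the factorization observation (since $\ker\pi=\mathrm{im}\,f\cong\mathcal{O}_{\mathbb{P}^3}\otimes V_1$, the condition $\pi\circ(\mathrm{id}_M\otimes A)\circ f=0$ is equivalent to $(\mathrm{id}_M\otimes A)\circ f=f\circ B$) then converts $\ker\phi$ into the elementary system $A(x_ix_j)=x_iB(x_j)$, which you solve completely ($B$ diagonal, then scalar, then $A$ scalar). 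The two computations are dual to one another — your solution $(A,B)=(\lambda\,\mathrm{id},\lambda\,\mathrm{id})$ corresponds precisely to the paper's kernel element $\sum_i x_i^*\otimes x_i$ — but your version has two advantages: the linear algebra that the paper leaves as ``explicit coordinates computations'' is actually carried out, and no $\mathrm{Ext}$-groups of $F$ need to be identified, only standard Euler-sequence facts. What the paper's formulation buys is alignment with the rest of Section \ref{section6}: the same style of $\mathrm{Ext}$-group identifications and trace/comultiplication maps recurs in Lemma \ref{uniqueextension}, Lemma \ref{FX} and Proposition \ref{FC}, whereas your argument is self-contained and does not set up that machinery.
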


\begin{proof}
  Applying $\mathrm{Hom}_{\mathbb{P}^{3}}(F,-)$ to (\ref{FF}), we have
  $$0= \mathrm{Hom}_{\mathbb{P}^{3}}(F,M\otimes V_{2}) \longrightarrow \mathrm{Hom}_{\mathbb{P}^{3}}(F,F) \longrightarrow \mathrm{Ext}_{\mathbb{P}^{3}}^{1}(F, \mathcal{O}_{\mathbb{P}^{3}}\otimes V_{1}) \overset{r}{\longrightarrow} \mathrm{Ext}_{\mathbb{P}^{3}}^{1}(F, M\otimes V_{2}). $$
  Hence $\mathrm{Hom}_{\mathbb{P}^{3}}(F,F)=\mathrm{ker}(r)$. By applying $\mathrm{Hom}_{\mathbb{P}^{3}}(-,\mathcal{O}_{\mathbb{P}^{3}})$ and $\mathrm{Hom}_{\mathbb{P}^{3}}(-,M)$ to (\ref{FF}), we see that
  $$\mathrm{Ext}_{\mathbb{P}^{3}}^{1}(F,\mathcal{O}_{\mathbb{P}^{3}})=V_{1}^{*}, \mathrm{Ext}_{\mathbb{P}^{3}}^{1}(F,M)=V_{1}^{*}\otimes V_{1}^{*}.$$
  The map $r$ is the composition of the following maps
  $$V_{1}^{*}\otimes V_{1}=V_{1}^{*}\otimes \mathbb{C}\otimes V_{1} \overset{\mathrm{id}\otimes \mathrm{tr}^{*} \otimes \mathrm{id}}{\longrightarrow} V_{1}^{*}\otimes V_{1}^{*}\otimes V_{1}\otimes V_{1} \overset{\mathrm{id}\otimes m}{\longrightarrow} V_{1}^{*}\otimes V_{1}^{*}\otimes V_{2},  $$
  where $\mathrm{tr}^{*}$ is the dual of the trace map $V_{1}^{*}\otimes V_{1} \rightarrow \mathbb{C}$. By explicit coordinates computations, we see that
  $$\mathrm{ker}(r)=\mathbb{C}\left(\sum_{i=0}^{3}x_{i}^{*}\otimes x_{i} \right)\in V_{1}^{*}\otimes V_{1}. $$
\end{proof}

\begin{lemma}\label{FX}
The bundle $F|_{X}$ is simple on $X$. 
\end{lemma}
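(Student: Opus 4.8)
The plan is to compare $\mathrm{Hom}_{X}(F|_{X}, F|_{X})$ with $\mathrm{Hom}_{\mathbb{P}^{3}}(F,F)$, which equals $\mathbb{C}$ by Lemma \ref{FP3}, using the restriction sequence of the quartic $X$. Since $X\in |4H|$, restriction to $X$ fits into the short exact sequence
$$0 \longrightarrow F(-4H) \longrightarrow F \longrightarrow F|_{X} \longrightarrow 0$$
on $\mathbb{P}^{3}$. Applying $\mathrm{Hom}_{\mathbb{P}^{3}}(F,-)$ and using the adjunction identification $\mathrm{Hom}_{\mathbb{P}^{3}}(F, F|_{X})\cong \mathrm{Hom}_{X}(F|_{X}, F|_{X})$, I get the exact sequence
$$\mathrm{Hom}_{\mathbb{P}^{3}}(F,F) \overset{\alpha}{\longrightarrow} \mathrm{Hom}_{X}(F|_{X}, F|_{X}) \longrightarrow \mathrm{Ext}^{1}_{\mathbb{P}^{3}}(F, F(-4H)).$$
The restriction map $\alpha$ sends $\mathrm{id}_{F}$ to $\mathrm{id}_{F|_{X}}\neq 0$, so since $\mathrm{Hom}_{\mathbb{P}^{3}}(F,F)=\mathbb{C}\cdot\mathrm{id}_{F}$ it is injective with image $\mathbb{C}$. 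Hence $\mathrm{Hom}_{X}(F|_{X}, F|_{X})=\mathbb{C}$ follows at once, provided I show $\mathrm{Ext}^{1}_{\mathbb{P}^{3}}(F, F(-4H)) = 0$.

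To prove this vanishing I would feed the defining resolution (\ref{FF}) into the first argument of $\mathrm{Ext}^{\bullet}_{\mathbb{P}^{3}}(-, F(-4H))$. The relevant terms are $\mathrm{Ext}^{i}(\mathcal{O}_{\mathbb{P}^{3}}\otimes V_{1}, F(-4H)) = H^{i}(F(-4H))\otimes V_{1}^{*}$ and $\mathrm{Ext}^{i}(M\otimes V_{2}, F(-4H)) = H^{i}(M^{*}\otimes F(-4H))\otimes V_{2}^{*}$, so everything reduces to cohomology of twists of $F$. A preliminary computation, using (\ref{FF}) twisted by $-4H$ together with the vanishing $H^{*}(M(-4H))=0$ (here $M(-4H)=T_{\mathbb{P}^{3}}(-5H)\cong \Omega^{2}_{\mathbb{P}^{3}}(-1)$, all of whose cohomology vanishes by Bott's formula), shows that $H^{i}(F(-4H))$ is concentrated in degree $2$; in particular $H^{0}(F(-4H))=H^{1}(F(-4H))=0$. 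The long exact sequence then collapses to an isomorphism
$$\mathrm{Ext}^{1}_{\mathbb{P}^{3}}(F, F(-4H)) \cong H^{1}(M^{*}\otimes F(-4H))\otimes V_{2}^{*},$$
so it suffices to prove $H^{1}(M^{*}\otimes F(-4H))=0$.

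For this last vanishing I would tensor (\ref{FF}) by $M^{*}(-4H)$ and use that $M = T_{\mathbb{P}^{3}}(-H)$, so that $M^{*}=\Omega^{1}_{\mathbb{P}^{3}}(H)$, $M^{*}(-4H)\cong \Omega^{1}_{\mathbb{P}^{3}}(-3)$, and $M^{*}\otimes M\cong \underline{End}(T_{\mathbb{P}^{3}})$. This expresses $H^{1}(M^{*}\otimes F(-4H))$ in terms of $H^{\bullet}(\Omega^{1}_{\mathbb{P}^{3}}(-3))$ and $H^{\bullet}(\underline{End}(T_{\mathbb{P}^{3}})(-4H))$. Both are computed from the Euler sequence together with Bott's formula for $H^{q}(\mathbb{P}^{3}, \Omega^{p}(k))$: one finds $H^{i}(\Omega^{1}_{\mathbb{P}^{3}}(-3))$ concentrated in degree $3$, and tensoring the Euler sequence by $\Omega^{1}_{\mathbb{P}^{3}}(-4)$ gives $H^{1}(\underline{End}(T_{\mathbb{P}^{3}})(-4H))=0$ because both neighboring groups $H^{1}(\Omega^{1}_{\mathbb{P}^{3}}(-3))$ and $H^{2}(\Omega^{1}_{\mathbb{P}^{3}}(-4))$ vanish. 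Hence $H^{1}(M^{*}\otimes F(-4H))=0$ and the lemma follows.

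The main obstacle is the final cohomology bookkeeping of the previous paragraph: although each individual group is accessible via Bott's formula, the identification $T_{\mathbb{P}^{3}}\cong \Omega^{2}_{\mathbb{P}^{3}}(4)$, and the Euler sequence, one must thread several nested long exact sequences (for $F(-4H)$, for $M^{*}\otimes F(-4H)$, and for $\underline{End}(T_{\mathbb{P}^{3}})(-4H)$) and confirm that precisely the $H^{1}$ contributions die. Everything else — the restriction sequence, the injectivity of $\alpha$, and the reduction to a single $H^{1}$ vanishing — is formal once Lemma \ref{FP3} is in hand.
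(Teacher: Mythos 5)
Your proof is correct and takes essentially the same route as the paper: both reduce, via the restriction sequence and Lemma \ref{FP3}, to showing $\mathrm{Ext}^{1}_{\mathbb{P}^{3}}(F,F(-4H))=0$, and then kill this group by unwinding the defining resolution (\ref{FF}). The only difference is bookkeeping order --- the paper resolves the second argument of the Ext group first (reducing to $\mathrm{Ext}^{1}_{\mathbb{P}^{3}}(F,M(-4H))=0$) while you resolve the first argument first (reducing to $H^{1}(M^{*}\otimes F(-4H))=0$) --- and both reductions terminate in the same Bott-type vanishings on $\mathbb{P}^{3}$, which you verify in more detail than the paper does.
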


\begin{proof}
  Applying $\mathrm{Hom}_{\mathbb{P}^{3}}(F,-)$ to the exact sequence
  $$0 \longrightarrow F(-4H) \longrightarrow F \longrightarrow F|_{X} \longrightarrow 0,$$
  we have
  $$0=\mathrm{Hom}_{\mathbb{P}^{3}}(F,F(-4H)) \longrightarrow \mathrm{Hom}_{\mathbb{P}^{3}}(F,F) \longrightarrow \mathrm{Hom}_{\mathbb{P}^{3}}(F, F|_{X}) \longrightarrow \mathrm{Ext}_{\mathbb{P}^{3}}^{1}(F,F(-4H)).$$
  By Lemma \ref{FP3}, $\mathrm{Hom}_{\mathbb{P}^{3}}(F,F)=\mathbb{C}$, hence it suffices to show $\mathrm{Ext}_{\mathbb{P}^{3}}^{1}(F,F(-4H))=0$.
  Applying $\mathrm{Hom}_{\mathbb{P}^{3}}(F, -$) to (\ref{FF}) twisted by $\mathcal{O}_{\mathbb{P}^{3}}(-4H)$, we have
  $$\mathrm{Ext}_{\mathbb{P}^{3}}^{1}(F,M(-4H)\otimes V_{2}) \longrightarrow \mathrm{Ext}_{\mathbb{P}^{3}}^{1}(F, F(-4H)) \longrightarrow \mathrm{Ext}_{\mathbb{P}^{3}}^{2}(F, \mathcal{O}_{\mathbb{P}^{3}}(-4H)\otimes V_{1})=0. $$
  Applying $\mathrm{Hom}_{\mathbb{P}^{3}}(-,M(-4H))$ to (\ref{FF}), we have
  $$0=\mathrm{Hom}_{\mathbb{P}^{3}}(\mathcal{O}_{\mathbb{P}^{3}}\otimes V_{1}, M(-4H)) \longrightarrow \mathrm{Ext}_{\mathbb{P}^{3}}^{1}(F, M(-4H)) \longrightarrow \mathrm{Ext}_{\mathbb{P}^{3}}^{1}(M\otimes V_{2}, M(-4H))=0. $$
  Hence $\mathrm{Ext}_{\mathbb{P}^{3}}^{1}(F,M(-4H))=\mathrm{Ext}_{\mathbb{P}^{3}}^{1}(F,F(-4H))=0$, and $F|_{X}$ is simple for all $X$. 
  
\end{proof}

\begin{proof}[Proof of Proposition \ref{FC}]
  Applying $\mathrm{Hom}_{X}(F|_{X}, -$) to the exact sequence
  $$0 \longrightarrow F|_{X}(-4H) \longrightarrow F|_{X} \longrightarrow F|_{C} \longrightarrow 0,$$
  we have
\begin{multline*}
  0=\mathrm{Hom}_{X}(F|_{X},F|_{X}(-4H)) \longrightarrow \mathrm{Hom}_{X}(F|_{X}, F|_{X})\\
  \longrightarrow \mathrm{Hom}_{X}(F|_{X}, F|_{C}) \longrightarrow \mathrm{Ext}_{X}^{1}(F|_{X}, F|_{X}(-4H)).
\end{multline*}
  By Lemma \ref{FX}, it suffices to show $\mathrm{Ext}_{X}^{1}(F|_{X},F|_{X}(-4H))=0$. Applying $\mathrm{Hom}_{X}(F|_{X},-)$ to the restriction of (\ref{FF}) to $X$, we have
  \begin{multline*}
    \mathrm{Ext}_{X}^{1}(F|_{X}, M|_{X}(-4H)\otimes V_{2}) \longrightarrow \mathrm{Ext}_{X}^{1}(F|_{X}, F|_{X}(-4H))\\
    \longrightarrow \mathrm{Ext}_{X}^{2}(F, \mathcal{O}_{X}(-4H)\otimes V_{1}) \overset{u}{\longrightarrow} \mathrm{Ext}_{X}^{2}(F,M|_{X}(-4H)\otimes V_{2}).
  \end{multline*}
    Applying $\mathrm{Hom}_{X}(-,M|_{X}(-4H))$ to the restriction of (\ref{FF}) to $X$, we have
    \begin{multline*}
      0=\mathrm{Hom}_{X}(\mathcal{O}_{X}\otimes V_{1}, M|_{X}(-4H)) \longrightarrow \mathrm{Ext}_{X}^{1}(F|_{X}, M|_{X}(-4H))\\
      \longrightarrow \mathrm{Ext}_{X}^{1}(M|_{X}\otimes V_{2}, M|_{X}(-4H))=0.
      \end{multline*}
    Hence $\mathrm{Ext}_{X}^{1}(F|_{X}, M|_{X}(-4H))=0$. To show $\mathrm{Ext}_{X}^{1}(F|_{X}, F|_{X}(-4H))=0$, it suffices to show the map $u$ is injective, equivalently, to show $u^{*}$ is surjective. Consider the following commutative diagram
    \[\begin{tikzcd}
        \mathrm{Hom}_{X}(M|_{X}(-4H),M|_{X}\otimes V_{2})\otimes V_{2}^{*}\arrow[r, "v\otimes \mathrm{id}_{V_{2}}"]\arrow[d] & \mathrm{Hom}_{X}(\mathcal{O}_{X}(-4H),M|_{X}\otimes V_{2})\otimes V_{1}^{*} \arrow[d] \\
        \mathrm{Hom}_{X}(M|_{X}(-4H),F)\otimes V_{2}^{*} \arrow[r, "u^{*}"]\arrow[d] & \mathrm{Hom}_{X}(\mathcal{O}_{X}(-4H), F)\otimes V_{1}^{*}\arrow[d] \\
        \mathrm{Ext}_{X}^{1}(M|_{X}(-4H),\mathcal{O}_{X}\otimes V_{1})\otimes V_{2}^{*}=0 & \mathrm{Ext}_{X}^{1}(\mathcal{O}_{X}(-4H), \mathcal{O}_{X}\otimes V_{1})\otimes V_{1}^{*}=0,
      \end{tikzcd}\]
    where the columns are exact. Hence it suffices to show the surjectivity of the map 
    $$v: \mathrm{Hom}_{X}(M|_{X}(-4H), M|_{X})\otimes V_{2} \rightarrow \mathrm{Hom}_{X}(\mathcal{O}_{X}(-4H), M|_{X})\otimes V_{1}^{*},$$
     which is described as follows. Applying $\mathrm{Hom}_{X}(-,M|_{X})$ to the Euler sequence
    $$0 \longrightarrow \mathcal{O}_{X}(-5H) \longrightarrow \mathcal{O}_{X}(-4H)\otimes V_{1}^{*} \overset{ev}{\longrightarrow} M|_{X}(-4H) \longrightarrow 0, $$
    we get $ev^{*}: \mathrm{Hom}_{X}(M|_{X}(-4H),M|_{X}) \longrightarrow \mathrm{Hom}_{X}(\mathcal{O}_{X}(-4H), M|_{X})\otimes V_{1}$. Let $e_{2}^{*}: V_{1}\otimes V_{2}^{*} \rightarrow V_{1}^{*}$ be the adjoint map of the comultiplication map $V_{2}^{*} \rightarrow V_{1}^{*}\otimes V_{1}^{*}$. Then $v$ is the composition of the following maps
    \begin{multline*}
      \mathrm{Hom}_{X}(M|_{X}(-4H),M|_{X})\otimes V_{2}^{*} \overset{ev\otimes \mathrm{id}_{V_{2}^{*}}}{\longrightarrow} \mathrm{Hom}_{X}(\mathcal{O}_{X}(-4H), M|_{X})\otimes V_{2}^{*} \otimes V_{1}\\
      \overset{\mathrm{id}\otimes e_{2}^{*}}{\longrightarrow} \mathrm{Hom}_{X}(\mathcal{O}_{X}(-4H), M|_{X})\otimes V_{1}^{*}.
      \end{multline*}
    By an explicit computation in coordinates, $v$ is surjective.
\end{proof}

We also need the following fact.
\begin{proposition}\label{uniqueinclusion}
For any $X\in L$, we have $\mathrm{Hom}_{C}(F|_{C}, E_{X}|_{C})=\mathbb{C}$. 
\end{proposition}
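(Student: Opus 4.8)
The plan is to restrict the defining sequence (\ref{EX}) to the base curve $C$ and apply $\mathrm{Hom}_C(F|_C,-)$. Since $C\subset X$ is cut out by (the restriction of) a quartic, $\mathcal{O}_X(H)$ stays flat along $C$, so the restriction
\[0 \longrightarrow F|_C \longrightarrow E_X|_C \longrightarrow \mathcal{O}_C(H) \longrightarrow 0\]
remains short exact. Applying $\mathrm{Hom}_C(F|_C,-)$ gives
\[0 \longrightarrow \mathrm{Hom}_C(F|_C,F|_C) \longrightarrow \mathrm{Hom}_C(F|_C,E_X|_C) \longrightarrow \mathrm{Hom}_C(F|_C,\mathcal{O}_C(H)) \overset{\partial}{\longrightarrow} \mathrm{Ext}^1_C(F|_C,F|_C).\]
By Proposition \ref{FC}, $\mathrm{Hom}_C(F|_C,F|_C)=\mathbb{C}$, and the image of this $\mathbb{C}$ is exactly the scalar multiples of the inclusion $F|_C\hookrightarrow E_X|_C$. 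Hence the entire statement reduces to showing that the connecting homomorphism $\partial$ is \emph{injective}: any $\psi\in\mathrm{Hom}_C(F|_C,E_X|_C)$ not proportional to the inclusion would project to a nonzero $\phi\in\mathrm{Hom}_C(F|_C,\mathcal{O}_C(H))$ lying in $\ker\partial$.

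To analyze $\partial$ I would first compute its source explicitly and then identify $\partial$ as Yoneda multiplication by the extension class. Restricting (\ref{FF}) to $C$ gives the presentation $0 \to \mathcal{O}_C\otimes V_1 \overset{f|_C}{\longrightarrow} M|_C\otimes V_2 \to F|_C \to 0$, and applying $\mathrm{Hom}_C(-,\mathcal{O}_C(H))$ identifies $\mathrm{Hom}_C(F|_C,\mathcal{O}_C(H))$ with the kernel of the induced map $\mathrm{Hom}_C(M|_C,\mathcal{O}_C(H))\otimes V_2^*\to \mathrm{Hom}_C(\mathcal{O}_C,\mathcal{O}_C(H))\otimes V_1^*$. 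Here $\mathrm{Hom}_C(\mathcal{O}_C,\mathcal{O}_C(H))=H^0(C,\mathcal{O}_C(H))=V_1$, and a short Koszul computation (using $M^\vee(H)=\Omega_{\mathbb{P}^3}(2H)$ and $H^0(\mathbb{P}^3,\Omega_{\mathbb{P}^3}(2H))\cong\wedge^2 V_1$) gives $\mathrm{Hom}_C(M|_C,\mathcal{O}_C(H))\cong\wedge^2 V_1$. Thus the source of $\partial$ becomes a concrete space of tensors, exactly as the $\mathrm{Ext}$-groups were made explicit in Lemma \ref{uniqueextension} and Lemma \ref{FP3}. By Lemma \ref{uniqueextension} the class of (\ref{EX}) is the restriction to $C$ of $\delta_1=\sum_{i}x_i^*\otimes x_i$, so $\partial$ is cup product with $\delta_1$; tracing it through the presentation expresses $\partial$ entirely in terms of the multiplication $V_1\otimes V_1\to V_2$ and the coevaluation, in the same form as the maps $g,r,v$ appearing in Lemmas \ref{uniqueextension}, \ref{FP3}, and Proposition \ref{FC}.

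The main obstacle is precisely this injectivity of $\partial$. Both its source (of dimension on the order of $\dim(\wedge^2 V_1\otimes V_2^*)$ minus a correction) and its target $\mathrm{Ext}^1_C(F|_C,F|_C)$ (forced to be large by Riemann--Roch, since $C$ is a genus-$33$ curve of degree $16$) are big, so there is no formal reason for $\partial$ to be injective; it must be checked by an explicit computation in the coordinates $x_0,\dots,x_3$, verifying that no nonzero tensor in the source is annihilated by Yoneda multiplication with $\delta_1$. This is the same flavor of multilinear-algebra computation that concludes Lemmas \ref{FP3} and \ref{FX} and Proposition \ref{FC}. I note that one may equivalently carry out the reduction on $X$ rather than on $C$: by adjunction $\mathrm{Hom}_C(F|_C,E_X|_C)=\mathrm{Hom}_X(F|_X,E_X|_C)$, and the sequence $0\to E_X(-4H)\to E_X\to E_X|_C\to 0$ reduces the claim to the injectivity of the analogous cup product with $\delta_1$ on $\mathrm{Hom}_X(F|_X,\mathcal{O}_X(H))$, which has an identical structure. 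On either surface the crux is this single injectivity statement.
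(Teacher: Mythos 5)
Your opening reduction is sound: restricting (\ref{EX}) to $C$ stays exact, and since $F|_{C}$ is simple (Proposition \ref{FC}), the proposition is equivalent to the injectivity of the connecting map $\partial\colon \mathrm{Hom}_{C}(F|_{C},\mathcal{O}_{C}(H))\to \mathrm{Ext}^{1}_{C}(F|_{C},F|_{C})$. But this is exactly where your argument stops, and that step is not a side verification --- it \emph{is} the proposition. An element of $\ker\partial$ is by definition a map $F|_{C}\to\mathcal{O}_{C}(H)$ that lifts to $E_{X}|_{C}$, so ``$\partial$ is injective'' is a verbatim restatement of ``every map $F|_{C}\to E_{X}|_{C}$ is proportional to the inclusion''; the reformulation buys nothing unless $\partial$ can actually be computed. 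Your plan for that --- an explicit coordinate computation ``of the same flavor'' as the maps $g$, $r$, $v$ --- is not justified: those maps live between spaces assembled from $V_{1}$, $V_{2}$, their duals, and cokernels of multiplication maps, whereas your target $\mathrm{Ext}^{1}_{C}(F|_{C},F|_{C})$ is an Ext group on the genus-$33$ curve $C$ (of dimension over $2\times 10^{4}$ by Riemann--Roch, since $F|_{C}$ has rank $26$) for which you supply no finite model, while your source $\ker\bigl(\wedge^{2}V_{1}\otimes V_{2}^{*}\to V_{1}\otimes V_{1}^{*}\bigr)$ has dimension at least $44$. Producing a usable model of the target, or of the image of $\partial$, would require precisely the vanishing theorems on $X$ that your proposal omits.

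This is the point of how the paper arranges the proof: it never meets an Ext group on $C$ at all. It applies the contravariant functor $\mathrm{Hom}_{C}(-,E_{X}|_{C})$ to the presentation $0\to\mathcal{O}_{C}\otimes V_{1}\to M|_{C}\otimes V_{2}\to F|_{C}\to 0$ (the restriction of (\ref{FF})), so that $\mathrm{Hom}_{C}(F|_{C},E_{X}|_{C})=\ker(h)$ for a map $h$ between two degree-zero Hom spaces. The real work is identifying those spaces concretely: $\mathrm{Hom}_{C}(M|_{C},E_{X}|_{C})\cong V_{2}$ and $\mathrm{H}^{0}(C,E_{X}|_{C})\cong\mathrm{coker}(h_{1})$, which rest on the vanishings $\mathrm{Ext}^{1}_{X}(M|_{X},E_{X}(-4H))=0$ and $\mathrm{H}^{1}(X,E_{X}(-4H))=0$, each proved by exhibiting an explicit injective multilinear map ($e_{3}=v^{*}$ and $h_{2}$, respectively). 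Only after this does a finite coordinate computation --- the kernel of $h$ on $V_{2}^{*}\otimes V_{2}$ equals $\mathbb{C}\cdot\mathrm{id}$ --- close the argument. Your proposal contains neither these vanishings nor any substitute for them. The same omission undercuts your claimed ``equivalent'' reduction on $X$: identifying $\mathrm{Hom}_{C}(F|_{C},E_{X}|_{C})$ with $\mathrm{Hom}_{X}(F|_{X},E_{X})$ requires $\mathrm{Hom}_{X}(F|_{X},E_{X}(-4H))=0$ and $\mathrm{Ext}^{1}_{X}(F|_{X},E_{X}(-4H))=0$, neither of which you establish, and the resulting cup-product injectivity on $X$ has the same unmodelled Ext-group target problem as on $C$.
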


\begin{proof}
  Applying $\mathrm{Hom}_{C}(- , E_{X}|_{C})$ to the sequence (\ref{EX}), we get
  \begin{equation}\label{FE}
    0 \longrightarrow \mathrm{Hom}_{C}(F|_{C}, E_{X}|_{C}) \longrightarrow \mathrm{Hom}_{C}(M|_{C}\otimes V_{2}, E_{X}|_{C}) \overset{h}{\longrightarrow} \mathrm{Hom}(\mathcal{O}_{C}\otimes V_{1}, E_{X}|_{C}).
    \end{equation}
  We compute each term in the following.

  To compute $\mathrm{Hom}_{C}(\mathcal{O}_{C}, E_{X}|_{C})$, we use the sequence (\ref{construction1}). Applying $ \mathrm{H}^{0}(X, -)$ to (\ref{construction1}), we get
  $$V_{1}= \mathrm{H}^{-1}(X, N_{X}) \overset{h_{1}}{\longrightarrow}  \mathrm{H}^{0}(X, M|_{X}\otimes V_{2}) \overset{p_{1}}{\longrightarrow}  \mathrm{H}^{0}(X, E_{X}) \longrightarrow  \mathrm{H}^{0}(X, N_{X})=0. $$
  We have nutural identification $ \mathrm{H}^{0}(X, M|_{X})=V_{1}^{*}$, and the map $h_{1}$ is the adjoint of the multiplication map $m: V_{1}\otimes V_{1} \longrightarrow V_{2}$. Hence $ \mathrm{H}^{0}(X, E_{X})=\mathrm{coker}(h_{1})$. Now we apply $ \mathrm{H}^{0}(X, -)$ to the sequnce
  $$0 \longrightarrow E_{X}(-4H) \longrightarrow E_{X} \longrightarrow E_{X}|_{C} \longrightarrow 0,  $$
  we get
  $$0= \mathrm{H}^{0}(X, E_{X}(-4H)) \longrightarrow  \mathrm{H}^{0}(X, E_{X}) \longrightarrow  \mathrm{H}^{0}(X, E_{X}|_{C}) \longrightarrow  \mathrm{H}^{1}(X, E_{X}(-4H)). $$
  We will show $ \mathrm{H}^{1}(X, E_{X}(-4H))=0$ by applying $ \mathrm{H}^{0}(X, -)$ to
  $$0 \longrightarrow M|_{X}(-4H)\otimes V_{2} \longrightarrow E_{X}(-4H) \longrightarrow N_{X}(-4H) \longrightarrow 0.$$
Let $K_{1}$ be the cokernel of the comultiplication $V_{X, 5}^{*} \longrightarrow V_{X, 4}^{*}\otimes V_{1}^{*}$. Then $ \mathrm{H}^{2}(X, M|_{X}(-4H))=K_{1}$. We also have $ \mathrm{H}^{1}(X, M|_{X}(-4H))=0$ and $ \mathrm{H}^{1}(X, N_{X})=V_{X, 4}^{*}\otimes V_{1}$. Hence $ \mathrm{H}^{1}(X, E_{X}(-4H))$ can be idetified with the kernel of the following composition of maps
$$h_{2}: V_{X, 4}^{*}\otimes V_{1} \overset{\mathrm{id}\otimes e_{2}}{\longrightarrow} V_{X, 4}^{*}\otimes V_{1}^{*}\otimes V_{2} \longrightarrow K_{1}\otimes V_{2},$$
where $e_{2}$ is the adjoint of the multiplication map $V_{1}\otimes V_{1} \longrightarrow V_{2}$. By an explicit computation in coordinates, we see that $h_{2}$ is injective. Hence $ \mathrm{H}^{1}(X, E_{X}(-4H))=0$ and we have $ \mathrm{H}^{0}(C, E_{X}|_{C})=\mathrm{coker}(h_{1})$.

To compute $\mathrm{Hom}_{C}(M|_{C}, E_{X}|_{C})$, we first compute $\mathrm{Hom}_{X}(M|_{X}, E_{X})$. Applying $\mathrm{Hom}_{X}(M,-)$ to (\ref{construction1}), we have
$$0 \longrightarrow \mathrm{Hom}_{X}(M|_{X}, M|_{X}\otimes V_{2}) \longrightarrow \mathrm{Hom}_{X}(M|_{X},E_{X}) \longrightarrow \mathrm{Hom}_{X}(M|_{X}, N_{X})=0. $$
Hence $\mathrm{Hom}_{X}(M, E_{X})=V_{2}$. Applying $\mathrm{Hom}|_{X}(M|_{X},-)$ to the sequence
$$0 \longrightarrow E_{X}(-4H) \longrightarrow E_{X} \longrightarrow E_{X}|_{C} \longrightarrow 0, $$
we have
\begin{multline*}
  0=\mathrm{Hom}_{X}(M|_{X}, E_{X}(-4H)) \longrightarrow \mathrm{Hom}_{X}(M|_{X}, E_{X})\\
  \longrightarrow \mathrm{Hom}_{C}(M|_{C}, E_{X}|_{C}) \longrightarrow \mathrm{Ext}_{X}^{1}(M|_{X}, E_{X}(-4H)).
  \end{multline*}
We will show $\mathrm{Ext}_{X}^{1}( M|_{X},E_{X}(-4H))=0$ by applying $\mathrm{Hom}_{X}(M|_{X}, -)$ to the sequence (\ref{construction1}) twisted by $\mathcal{O}_{X}(-4H)$. We get
\begin{multline*}
  \mathrm{Ext}_{X}^{1}(M,M(-4H)\otimes V_{2}) \longrightarrow \mathrm{Ext}_{X}^{1}(M, E_{X}(-4H))\\
  \longrightarrow \mathrm{Ext}_{X}^{1}(M, N_{X}(-4H)) \overset{e_{3}}{\longrightarrow} \mathrm{Ext}_{X}^{2}( M, M(-4H)\otimes V_{2}).
  \end{multline*}
Note that $e_{3}$ is exactly the map $v^{*}$, where $v$ is defined in the proof of Lemma \ref{FC}. Hence $e_{3}$ is injective. We have $\mathrm{Ext}_{X}^{1}(M|_{X}, E_{X}(-4H))=0$ and $\mathrm{Hom}_{C}(M|_{C}, E_{X}|_{C})=V_{2}$ for any $X$. 

Getting back to the sequence (\ref{FE}), the map $h$ is the composition of the following maps
$$V_{2}^{*}\otimes V_{2} \overset{m^{*}\otimes \mathrm{id}}{\longrightarrow} V_{1}^{*}\otimes V_{1}^{*}\otimes V_{2} \overset{\mathrm{id}\otimes p_{1}}{\longrightarrow} \mathrm{coker}(h_{1}), $$
where $m^{*}$ is the comultiplication map $V_{2}^{*} \longrightarrow V_{1}^{*}\otimes V_{1}^{*}$. By an explicit computation in coordinates, the kernel of $h$ is the identity element in $V_{2}^{*}\otimes V_{2}$. Hence $\mathrm{Hom}_{C}(F|_{C}, E_X|_{C})=\mathbb{C}$. 
  
\end{proof}

The following lemma allows us to prove certain bundles are non-isomorphic.
\begin{lemma}\label{scalar}
  On $C$, let
  $$\delta_{i}=[0 \longrightarrow F|_{C} \longrightarrow E_{i} \longrightarrow \mathcal{O}_{C}(H) \longrightarrow 0]\in \mathrm{Ext}^{1}_{C}(\mathcal{O}_{C}(H), F|_{C}), i=1,2 $$
  be extension classes such that $\mathrm{Hom}_{C}(F|_{C}, E_{i})=\mathbb{C}, i=1,2$. If $E_{1}\cong E_{2}$, then $\delta_{1}=\lambda \delta_{2}$ for some scalar $\lambda\in \mathbb{C}^{*}$.
\end{lemma}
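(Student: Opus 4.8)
The plan is to leverage the hypothesis $\mathrm{Hom}_C(F|_C, E_i) = \mathbb{C}$ to rigidify every map in sight up to a scalar, and then to read off the relation $\delta_1 = \lambda\delta_2$ from the functoriality of $\mathrm{Ext}^1$ applied to a morphism of the two defining short exact sequences. First I would fix the structure maps $\iota_i\colon F|_C \hookrightarrow E_i$ and $\pi_i\colon E_i \to \mathcal{O}_C(H)$ coming from $\delta_i$, and let $\phi\colon E_1 \xrightarrow{\sim} E_2$ be the given isomorphism. The composite $\phi\circ \iota_1\colon F|_C \to E_2$ is an element of $\mathrm{Hom}_C(F|_C, E_2) = \mathbb{C}\cdot \iota_2$, so $\phi\circ\iota_1 = \mu\,\iota_2$ for a unique $\mu\in\mathbb{C}$.

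The key observation is that $\mu \neq 0$: since $\phi$ is injective and $\iota_1\neq 0$, the composite $\phi\circ\iota_1$ is nonzero, forcing $\mu\in\mathbb{C}^*$. Consequently $\phi$ carries the subsheaf $\mathrm{im}(\iota_1)$ isomorphically onto $\mathrm{im}(\iota_2)$, and therefore descends to an isomorphism $\bar\phi\colon \mathcal{O}_C(H) \to \mathcal{O}_C(H)$ on the quotients. Because $C$ is a connected reduced curve (a $(4,4)$-complete intersection), we have $\mathrm{End}_C(\mathcal{O}_C(H)) = \mathrm{H}^0(C,\mathcal{O}_C) = \mathbb{C}$, so $\bar\phi = \nu\cdot\mathrm{id}$ for some scalar $\nu$, and $\nu\in\mathbb{C}^*$ since $\bar\phi$ is an isomorphism. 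This assembles into a morphism of short exact sequences with left vertical arrow $\mu\cdot\mathrm{id}_{F|_C}$, middle arrow $\phi$, and right vertical arrow $\nu\cdot\mathrm{id}_{\mathcal{O}_C(H)}$.

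Finally I would invoke the standard compatibility of extension classes with morphisms of short exact sequences: for a commutative ladder with left map $\alpha$ and right map $\gamma$ one has $\alpha_*\delta_1 = \gamma^*\delta_2$ in $\mathrm{Ext}^1_C(\mathcal{O}_C(H), F|_C)$. Here $\alpha = \mu\cdot\mathrm{id}$ and $\gamma = \nu\cdot\mathrm{id}$ act on $\mathrm{Ext}^1$ simply as multiplication by $\mu$ and $\nu$ respectively, so the identity reads $\mu\,\delta_1 = \nu\,\delta_2$. Dividing by $\mu$ yields $\delta_1 = (\nu/\mu)\,\delta_2 = \lambda\delta_2$ with $\lambda = \nu/\mu \in \mathbb{C}^*$, as claimed.

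The only real obstacle is ensuring that $\phi$ genuinely respects the extension structure, i.e.\ that it neither annihilates the sub-object nor the quotient; this is precisely where the hypothesis $\mathrm{Hom}_C(F|_C, E_i) = \mathbb{C}$ is essential (it forces $\phi\circ\iota_1$ to be a \emph{scalar} multiple of $\iota_2$, hence either zero or an isomorphism onto $\mathrm{im}(\iota_2)$), together with the simplicity of the line bundle $\mathcal{O}_C(H)$ on the connected curve $C$. Once $\mu,\nu\in\mathbb{C}^*$ are in hand, the remainder is formal diagram chasing and the elementary behavior of $\mathrm{Ext}^1$ under scalar endomorphisms of the sub- and quotient objects.
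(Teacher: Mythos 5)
Your proof is correct and follows essentially the same route as the paper: use $\mathrm{Hom}_C(F|_C,E_2)=\mathbb{C}$ to turn the isomorphism $\phi$ into a morphism of the two short exact sequences, observe that the outer vertical maps are nonzero scalars (simplicity of $F|_C$ and of $\mathcal{O}_C(H)$), and conclude by functoriality of $\mathrm{Ext}^1$. The only difference is that you spell out the final step $\mu\,\delta_1=\nu\,\delta_2$ explicitly, which the paper leaves implicit.
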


\begin{proof}
  Let $f: E_{1} \rightarrow E_{2}$ be an isomorphism. Then the composition
  $$F|_{C}\subset E_{1} \overset{f}{\longrightarrow} E_{2}$$
  is non-zero. Since $\mathrm{Hom}(F|_{C}, E_{2})=\mathbb{C}$, we have the following commutative diagram
  \[
  \begin{tikzcd}
    0\arrow[r] & F|_{C} \arrow[r]\arrow[d, "f_{1}"] & E_{1} \arrow[r]\arrow[d, "f"] & \mathcal{O}_{C}(H) \arrow[r]\arrow[d, "f_{2}"] & 0 \\
    0\arrow[r] & F|_{C} \arrow[r] & E_{2} \arrow[r] & \mathcal{O}_{C}(H) \arrow[r] & 0.  
  \end{tikzcd}\]
By Proposition \ref{FC}, $F|_{C}$ and $\mathcal{O}_{C}(H)$ are both simple. Hence $f_{1}, f_{2}$ are non-zero scalar multiplications. 
\end{proof}

The following observation plays a central role in the proof of Theorem \ref{rank27}.

\begin{proposition}\label{nonconstant}
  For distinct $t_{i}\in L, i=1,2$, the restriction maps
  $$\mathrm{res}_{t_{i}}: \mathrm{Ext}^{1}_{X_{t_{i}}}(\mathcal{O}_{X_{t_{i}}}(H), F_{X_{t_{i}}}) \longrightarrow \mathrm{Ext}_{C}^{1}(\mathcal{O}_{C}(H), F_{C}), i=1,2 $$
  have distinct images, as subspaces of the fixed space $\mathrm{Ext}_{C}^{1}(\mathcal{O}_{C}(H), F_{C})$.
\end{proposition}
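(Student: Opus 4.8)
The plan is to make the variation of $\mathrm{res}_{t}$ with $t$ completely explicit by transporting both sides to cohomology on $\mathbb{P}^{3}$. Write $C=V(g_{0},g_{\infty})$ for two quartics spanning the pencil, so that $X_{t}=V(g_{t})$ with $g_{t}=g_{0}+tg_{\infty}$. Since $\mathcal{O}(H)$ is a line bundle, I would first rewrite $\mathrm{Ext}^{1}_{X_{t}}(\mathcal{O}_{X_{t}}(H),F|_{X_{t}})=\mathrm{H}^{1}(X_{t},F|_{X_{t}}(-H))$ and the fixed target $\mathrm{Ext}^{1}_{C}(\mathcal{O}_{C}(H),F|_{C})=\mathrm{H}^{1}(C,F|_{C}(-H))=:W$, so that $\mathrm{res}_{t}$ is simply the restriction map on $\mathrm{H}^{1}$ induced by $C\subset X_{t}$. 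The goal is then to show that, under one fixed identification of $W$, the line $\mathrm{res}_{t}(\delta_{t})$ moves as $\eta^{(0)}+t\,\eta^{(\infty)}$ for two fixed, linearly independent vectors; injectivity of $t\mapsto[\eta^{(0)}+t\,\eta^{(\infty)}]$ then yields the claim immediately.

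The cohomological input comes from the defining sequence (\ref{FF}). Twisting it by $\mathcal{O}_{\mathbb{P}^{3}}(-H),(-5H),(-9H)$ and applying Bott vanishing to the corresponding twists of $M=T_{\mathbb{P}^{3}}(-H)$, I expect to obtain $\mathrm{H}^{i}(\mathbb{P}^{3},F(-H))=0$ for all $i$, $\mathrm{h}^{2}(\mathbb{P}^{3},F(-5H))=1$ (consistent with Lemma \ref{uniqueextension}), and $\mathrm{H}^{2}(\mathbb{P}^{3},F(-9H))=0$. The first two vanishings turn $0\to F(-5H)\xrightarrow{g_{t}}F(-H)\to F|_{X_{t}}(-H)\to 0$ into a canonical isomorphism $\partial_{t}\colon \mathrm{H}^{1}(X_{t},F|_{X_{t}}(-H))\xrightarrow{\sim}\mathrm{H}^{2}(\mathbb{P}^{3},F(-5H))=\mathbb{C}\eta$, so that $\delta_{t}$ corresponds to the \emph{single fixed} class $\eta$ for every $t$. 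Feeding the Koszul resolution of $C=V(g_{0},g_{\infty})$ into the same vanishings identifies $W\cong \mathrm{H}^{2}(\mathbb{P}^{3},K)$, where $K=\mathrm{Im}\big(F(-5H)^{\oplus2}\xrightarrow{(g_{0},g_{\infty})}F(-H)\big)$, and yields a map $\rho\colon \mathrm{H}^{2}(\mathbb{P}^{3},F(-5H))^{\oplus2}\to W$. The vanishing $\mathrm{H}^{2}(\mathbb{P}^{3},F(-9H))=0$ forces $\rho$ to be injective, so $\eta^{(0)}:=\rho(\eta,0)$ and $\eta^{(\infty)}:=\rho(0,\eta)$ are linearly independent in $W$.

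The heart of the argument is a short \v{C}ech chase linking these two pictures. I would represent $\eta$ by a $2$-cocycle $(\eta_{ijk})$ for an affine cover; since $\partial_{t}(\delta_{t})=\eta$, I can lift $\delta_{t}$ to a $1$-cochain $(\tilde\delta_{ij})$ of $F(-H)$ with $d\tilde\delta=g_{t}\,\eta$. Because $g_{t}|_{C}=0$, the restriction $\tilde\delta|_{C}$ is a cocycle representing $\mathrm{res}_{t}(\delta_{t})$, and applying the connecting isomorphism $W\cong\mathrm{H}^{2}(\mathbb{P}^{3},K)$ returns the class of $g_{t}\,\eta=(g_{0}+tg_{\infty})\eta$ viewed in $K$. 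As $g_{0}\eta$ and $g_{\infty}\eta$ represent precisely $\eta^{(0)}$ and $\eta^{(\infty)}$, this gives $\mathrm{res}_{t}(\delta_{t})=\eta^{(0)}+t\,\eta^{(\infty)}$, linear in $t$.

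With this formula the conclusion is immediate: $\eta^{(0)},\eta^{(\infty)}$ are independent, so $\mathrm{res}_{t}(\delta_{t})\neq0$ for every $t$ and $t\mapsto[\mathrm{res}_{t}(\delta_{t})]$ is a linear embedding of $L\cong\mathbb{P}^{1}$ into $\mathbb{P}(W)$; distinct $t_{1},t_{2}$ therefore have distinct images. I expect the only genuine labor to be the explicit (Bott/coordinate) computation that $\mathrm{H}^{2}(\mathbb{P}^{3},F(-9H))=0$, equivalently that $\rho$ is injective so that $\eta^{(0)},\eta^{(\infty)}$ span a $2$-plane; once the three cohomology vanishings are in place, the boundary-map identifications and the \v{C}ech chase are formal. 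The one conceptual point to verify carefully is that the \emph{same} generator $\eta$ serves for all $t$, which is exactly what the isomorphism $\partial_{t}$ together with $\mathrm{H}^{i}(\mathbb{P}^{3},F(-H))=0$ guarantees.
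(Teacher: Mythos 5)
Your proposal is correct in outline and reaches the conclusion by a genuinely different route than the paper, although the two arguments share the same underlying mechanism (the image line moves linearly with $t$, so non-degeneracy of that linear motion is the whole content). The paper works on the blow-up: it relativizes Lemma \ref{uniqueextension} via relative Serre duality to get $R^{1}\pi_{*}\underline{Hom}(\mathcal{O}_{\widetilde{\mathbb{P}^{3}}}(H),\nu^{*}F)\cong\mathcal{O}_{L}(-1)$, observes the dichotomy that a map $\mathcal{O}_{L}(-1)\to \mathrm{Ext}^{1}_{C}(\mathcal{O}_{C}(H),F_{C})\otimes\mathcal{O}_{L}$ either has pairwise distinct fiberwise images or vanishes at some $t$, imports non-vanishing from simplicity of $E_{X_{t}}|_{C}$ for Picard-rank-one fibers (which rests on $\mathrm{Ext}^{1}_{X_{t}}(E_{X_{t}},E_{X_{t}}(-4H))=0$ from \cite{Liu22}), and excludes a zero of the section by a torsion argument on $R^{2}\pi_{*}$ using constancy of $\mathrm{ext}^{2}_{X_{t}}(\mathcal{O}_{X_{t}}(H),F|_{X_{t}}(-4H))$. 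You stay on $\mathbb{P}^{3}$, use the Koszul resolution of $C$, and extract the closed formula $\mathrm{res}_{t}(\delta_{t})=\eta^{(0)}+t\,\eta^{(\infty)}$, so that linear independence of $\eta^{(0)},\eta^{(\infty)}$ delivers non-vanishing and pairwise distinctness in one stroke. Your supporting identifications are sound: $\mathrm{H}^{i}(\mathbb{P}^{3},F(-H))=0$ for all $i$ follows from (\ref{FF}) and the Euler sequence; given that, $\mathrm{h}^{2}(\mathbb{P}^{3},F(-5H))=1$ is exactly Lemma \ref{uniqueextension} read through the connecting isomorphism; and your \v{C}ech chase is the commutativity of the ladder from $(F(-5H)\xrightarrow{g_{t}}F(-H))$ to $(K\hookrightarrow F(-H))$ with vertical map multiplication by $g_{t}$. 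What your route buys is independence from the blow-up geometry, from relative duality, and from the stability input of \cite{Liu22}; what it costs is one additional explicit cohomology computation.

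That computation, $\mathrm{H}^{2}(\mathbb{P}^{3},F(-9H))=0$, is the one place where your write-up is not yet a proof: you only state that you ``expect'' it, and without it the argument collapses (if $\eta^{(0)},\eta^{(\infty)}$ were proportional, all the image lines would coincide). The claim is in fact true, and provable in the same coordinate style the paper uses elsewhere. By Serre duality and the dual of (\ref{FF}) twisted by $\mathcal{O}(5H)$, using $\mathrm{H}^{1}(\mathbb{P}^{3},\Omega^{1}_{\mathbb{P}^{3}}(6))=0$, one gets
$$\mathrm{H}^{2}(\mathbb{P}^{3},F(-9H))^{*}\cong \mathrm{H}^{1}(\mathbb{P}^{3},F^{*}(5H))\cong \mathrm{coker}\Bigl(\mathrm{H}^{0}(\Omega^{1}_{\mathbb{P}^{3}}(6))\otimes V_{2}^{*}\longrightarrow \mathrm{H}^{0}(\mathcal{O}_{\mathbb{P}^{3}}(5))\otimes V_{1}^{*}\Bigr),$$
where the map sends $\sum_{j}p_{j}\otimes u_{j}\otimes\psi$ to $\sum_{j}p_{j}\otimes\psi(u_{j}\cdot-)$. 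Since $\mathrm{H}^{0}(\Omega^{1}_{\mathbb{P}^{3}}(6))$ is spanned by Koszul syzygies $q\,(x_{m}\otimes x_{l}-x_{l}\otimes x_{m})$ with $q$ quartic, testing against $\psi=(x_{l}x_{k})^{*}$ with $l,m,k$ distinct produces every $q\,x_{m}\otimes x_{k}^{*}$ with $m\neq k$, and then $q=x_{k}^{4}$, $\sigma=x_{k}^{4}(x_{k}\otimes x_{l}-x_{l}\otimes x_{k})$, $\psi=(x_{l}x_{k})^{*}$ produces $x_{k}^{5}\otimes x_{k}^{*}$ modulo the previous elements; hence the map is surjective. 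So fill in this verification and your proof is complete.
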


\begin{proof}
  Recall that $\widetilde{\mathbb{P}^{3}}=\mathrm{BL}_{C}\mathbb{P}^{3}$, $\nu: \widetilde{\mathbb{P}^{3}} \rightarrow \mathbb{P}^{3}$ is the projection and $\pi: \widetilde{\mathbb{P}^{3}} \rightarrow L$ is the $K3$ fibration. Let $Y\cong C\times L$ be the exceptional divisor of $\nu$. The pushforward $\pi_{*}$ on the restriction map $\nu^{*}F \longrightarrow \nu^{*}F|_{Y}$ induces the map
  $$\mathrm{res}: R^{1}\pi_{*}\underline{Hom}_{\widetilde{\mathbb{P}^{3}}}(\mathcal{O}_{\widetilde{\mathbb{P}^{3}}}(H), \nu^{*}F) \longrightarrow  R^{1}\pi_{*}\underline{Hom}_{\widetilde{\mathbb{P}^{3}}}(\mathcal{O}_{\widetilde{\mathbb{P}^{3}}}(H), \nu^{*}F|_{Y})\cong \mathrm{Ext}_{C}^{1}(\mathcal{O}_{C}(H), F_{C})\otimes \mathcal{O}_{L}. $$
  The fiber of $\mathrm{res}$ over $t\in L$ is the map
  $$\mathrm{res}_{t}: \mathrm{Ext}^{1}_{X_{t}}(\mathcal{O}_{X_{t}}(H), F_{X_{t}}) \longrightarrow \mathrm{Ext}_{C}^{1}(\mathcal{O}_{C}(H), F_{C}). $$
  Relativize the proof of Lemma \ref{uniqueextension} by using the relative Serre Duality, we have
  $$R^{1}\pi_{*}\underline{Hom}_{\widetilde{\mathbb{P}^{3}}}(\mathcal{O}_{\widetilde{\mathbb{P}^{3}}}(H), \nu^{*}F) \cong \mathcal{O}_{L}(-1).$$
  Hence the restrictions of $\mathrm{res}: \mathcal{O}_{L}(-1) \longrightarrow  \mathrm{Ext}_{C}^{1}(\mathcal{O}_{C}(H), F_{C})\otimes \mathcal{O}_{L}$ to fibers either have pairwise distinct image, or vanish at some $t\in L$.

  By the algorithm in \cite{Liu22} or a direct computation, we have $\mathrm{Ext}_{X_{t}}^{1}(E_{X_{t}}, E_{X_{t}}(-4H))=0$ for any smooth quartic surface $X_{t}$ with Picard rank 1. Hence $E_{X_{t}}|_{C}$ is simple, the map $\mathrm{res}_{t}$ is non-zero, hence the map $\mathrm{res}$ is also non-zero. If $\mathrm{res}$ vanishes at some $t\in L$, then the sheaf $R^{2}\pi_{*}\underline{Hom}_{\widetilde{\mathbb{P}^{3}}}(\mathcal{O}_{\widetilde{\mathbb{P}^{3}}}(H), \nu^{*}F(-Y))$ must have torsion. We exclude this by showing
  $$\mathrm{Ext}_{X_{t}}^{2}(\mathcal{O}_{X_{t}}(H), F|_{X_{t}}(-4H))\cong \mathrm{Hom}_{X_{t}}(F|_{X}(-4H), \mathcal{O}_{X_{t}}(H))^{*}$$
  has constant dimension for $t\in L$. Applying $\mathrm{Hom}_{\mathbb{P}^{3}}(F(-4H),-)$ to the exact sequence
  $$0 \longrightarrow \mathcal{O}_{\mathbb{P}^{3}}(-3H) \longrightarrow \mathcal{O}_{\mathbb{P}^{3}}(H) \longrightarrow \mathcal{O}_{X_{t}}(H) \longrightarrow 0, $$
  we get
  \begin{multline*}
    0 \longrightarrow \mathrm{Hom}_{\mathbb{P}^{3}}(F(-4H), \mathcal{O}_{\mathbb{P}^{3}}(-3H)) \longrightarrow \mathrm{Hom}_{\mathbb{P}^{3}}(F(-4H), \mathcal{O}_{\mathbb{P}^{3}}(H))\\
    \longrightarrow \mathrm{Hom}_{X_{t}}(F|_{X}(-4H), \mathcal{O}_{X_{t}}(H)) \longrightarrow \mathrm{Ext}^{1}_{\mathbb{P}^{3}}(F(-4H), \mathcal{O}_{\mathbb{P}^{3}}(-3H))=0.
  \end{multline*}
Hence $\mathrm{Ext}_{X_{t}}^{2}(\mathcal{O}_{X_{t}}(H), F|_{X_{t}}(-4H))$ has constant dimension, we must have that the map $\mathrm{res}: \mathcal{O}_{L}(-1) \longrightarrow  \mathrm{Ext}_{C}^{1}(\mathcal{O}_{C}(H), F_{C})\otimes \mathcal{O}_{L}$ has non-constant image.
\end{proof}

\begin{corollary}\label{nonisomorphic}
For distinct $t, t'\in L$, we have $E_{X_{t}}|_{C}\not\cong E_{X_{t'}}|_{C}$. 
\end{corollary}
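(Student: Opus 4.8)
The plan is to deduce the corollary formally from the structural results already proved, by identifying $E_{X_t}|_C$ as an extension of $\mathcal{O}_C(H)$ by $F|_C$ whose class is the restriction of the canonical class defining $E_{X_t}$. The only content beyond Propositions \ref{FC}, \ref{uniqueinclusion}, \ref{nonconstant} and Lemma \ref{scalar} is bookkeeping about how the extension class behaves under restriction to the base curve.

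First I would restrict the defining sequence (\ref{EX}) of $E_{X_t}$ from $X_t$ to the base curve $C \subset X_t$. Since $F|_{X_t}$ restricts to $F|_C$ and $\mathcal{O}_{X_t}(H)$ restricts to $\mathcal{O}_C(H)$, this yields a short exact sequence $0 \to F|_C \to E_{X_t}|_C \to \mathcal{O}_C(H) \to 0$ whose extension class is $\mathrm{res}_t(\delta_t)$, where $\delta_t$ generates the one-dimensional space $\mathrm{Ext}^1_{X_t}(\mathcal{O}_{X_t}(H), F_{X_t})$ of Lemma \ref{uniqueextension}. As noted in the proof of Proposition \ref{nonconstant}, $\mathrm{res}_t$ is non-zero on this line, hence injective, so $\mathrm{res}_t(\delta_t) \neq 0$ and the restricted sequence is genuinely non-split. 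Thus $E_{X_t}|_C$ and $E_{X_{t'}}|_C$ are realized as extensions of $\mathcal{O}_C(H)$ by $F|_C$ with classes $\mathrm{res}_t(\delta_t)$ and $\mathrm{res}_{t'}(\delta_{t'})$ respectively.

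Next I would invoke Proposition \ref{nonconstant}: for distinct $t, t' \in L$ the images of $\mathrm{res}_t$ and $\mathrm{res}_{t'}$ are distinct lines inside the fixed space $\mathrm{Ext}^1_C(\mathcal{O}_C(H), F_C)$, so $\mathrm{res}_t(\delta_t)$ and $\mathrm{res}_{t'}(\delta_{t'})$ are non-zero and not proportional. Finally, assume for contradiction that $E_{X_t}|_C \cong E_{X_{t'}}|_C$. By Proposition \ref{uniqueinclusion} both bundles satisfy the hypothesis $\mathrm{Hom}_C(F|_C, -) = \mathbb{C}$ required by Lemma \ref{scalar}, so that lemma forces $\mathrm{res}_t(\delta_t) = \lambda\, \mathrm{res}_{t'}(\delta_{t'})$ for some $\lambda \in \mathbb{C}^*$, contradicting the non-proportionality just established. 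Hence $E_{X_t}|_C \not\cong E_{X_{t'}}|_C$.

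The main obstacle is not in this final assembly but already behind us in Proposition \ref{nonconstant}, which supplies the distinctness of the images and which in turn rests on the simplicity of $F|_C$ (Proposition \ref{FC}), the rigidity of the inclusion (Proposition \ref{uniqueinclusion}), and the identification $R^1\pi_{*}\underline{Hom}(\mathcal{O}_{\widetilde{\mathbb{P}^{3}}}(H),\nu^{*}F) \cong \mathcal{O}_L(-1)$ together with the torsion-freeness argument excluding a vanishing fiber. Once those facts are in hand, the corollary follows purely formally by combining Lemma \ref{scalar} with the non-proportionality of the two restricted extension classes, and no further computation is needed.
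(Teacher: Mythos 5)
Your proof is correct and follows essentially the same route as the paper's: restrict the defining extension (\ref{EX}) to $C$, use Proposition \ref{uniqueinclusion} to meet the hypothesis of Lemma \ref{scalar}, and derive a contradiction with the non-proportionality of the restricted classes supplied by Proposition \ref{nonconstant}. The extra bookkeeping you include (identifying the restricted class as $\mathrm{res}_t(\delta_t)$ and noting its non-vanishing) is implicit in the paper's terser argument but adds nothing structurally different.
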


\begin{proof}
By Proposition \ref{uniqueinclusion}, $E_{X}$ satisfies $\mathrm{Hom}(F|_{C}, E_{X})=\mathbb{C}$ for any $X\in L$. If $E_{X_{t}}|_{C}\cong E_{X_{t'}}|_{C}$ for any $t\neq t'$, by Lemma \ref{scalar}, the extension classes given by (\ref{EX}) are scalar multiples. By Proposition \ref{nonconstant}, this is not true.
\end{proof}

Combining all preparations, we now prove Theorem \ref{rank27}. 

\begin{proof}[Proof of Theorem \ref{rank27}]
  Suppose $E$ is an exceptional bundle on $\mathbb{P}^{3}$ of rank $27$ and degree $11$. Take distinct $t_{1}, t_{2}\in L$ so that $\mathrm{Pic}(X_{t_{i}})\cong \mathbb{Z}H, i=1,2$. By \cite{Zub90}, we have
  $$E|_{X_{t_{i}}}\cong E_{X_{t_{i}}}\in M_{X_{t_{i}}, H}(v), i=1,2. $$
  Restricting to $C$, we have
  $E_{X_{t_{1}}}|_{C} \cong E|_{C} \cong E_{X_{t_{2}}}|_{C}$.
 By Corollary \ref{nonisomorphic}, this is not true.
\end{proof}

\section*{References}

\bibliographystyle{alpha}
\renewcommand{\section}[2]{} 
\bibliography{reference}

\end{document}